\documentclass{amsart}

\RequirePackage{float}
\RequirePackage[margin=1in,dvips]{geometry}
\RequirePackage{graphicx}
\RequirePackage{amsmath, amsthm, amssymb, amsfonts,slashed}
\RequirePackage[colorlinks=true]{hyperref}
\hypersetup{urlcolor=blue, citecolor=blue}
\RequirePackage[nameinlink]{cleveref}
\RequirePackage{xifthen}
\RequirePackage{subcaption}
\RequirePackage{indentfirst}
\RequirePackage{microtype}
\RequirePackage{xcolor}
\definecolor{mg}{RGB}{34, 139, 34}

\newenvironment{Eq}[1][]{\begin{equation}\ifthenelse{\equal{#1}{}}{}{\tag{#1}}\begin{aligned}}{\end{aligned}\end{equation}\ignorespacesafterend}
\newenvironment{Eq*}[1][]{\begin{equation*}\ifthenelse{\equal{#1}{}}{}{\tag{#1}}\begin{aligned}}{\end{aligned}\end{equation*}\ignorespacesafterend}
\numberwithin{equation}{section}
\newtheorem{theorem}{Theorem}[section]
\newtheorem{hypothesis}{Hypothesis}
\newtheorem{proposition}[theorem]{Proposition}

\newtheorem{lemma}[theorem]{Lemma}
\theoremstyle{remark}%
\newtheorem{remark}{Remark}[section] 
\theoremstyle{definition}

\newcommand{\al}{\alpha}    
    
\newcommand{\vep}{\varepsilon}

\newcommand{\R}{\mathbb{R}}
\newcommand{\C}{\mathbb{C}}

\newcommand{\M}{\mathcal{M}}
\newcommand{\Hg}{\mathcal{H}}

\def\<{\langle}             \def\>{\rangle}
\newcommand{\pa}{\partial}
\newcommand{\les}{{\lesssim}}
\newcommand{\beeq}{\begin{equation}}\newcommand{\eneq}{\end{equation}}
\newcommand{\varE}{{\mathfrak E}}
\newcommand{\varF}{\varE'}

\DeclareMathOperator{\fd}{d}	\renewcommand{\d}{\fd}

\newcommand{\Roma}[1]{\uppercase\expandafter{\romannumeral#1}}

\newcommand{\sGamma}{{\slashed \Gamma}}

\newcommand{\sg}{{\slashed g}}
\newcommand{\snabla}{{\slashed \nabla}}

\newcommand{\tw}{{\tilde{w}}}
\newcommand{\tpartial}{{\tilde{\partial}}}

\newcommand{\rR}{{\mathbb{R}}}
\newcommand{\rH}{{\mathbb H}}
\newcommand{\rS}{{\mathbb{S}}}

\newcommand{\spartial}{{\slashed \partial}}
\newcommand{\hpartial}{{\hat \partial}}

\newcommand{\kl}[1]{\mathopen{}\left#1}
\newcommand{\kr}[1]{\right#1}

\crefname{equation}{}{}
\newcommand{\Tm}[1]{Theorem \ref{#1}}

\newcommand{\La}[1]{Lemma \ref{#1}}

\newcommand{\Sn}[1]{Section \ref{#1}}

\newcommand{\Hs}[1]{Hypothesis \ref{#1}}

\title[Wave equations on hyperbolic space]
{Global existence for quasilinear wave equations on hyperbolic space}

\author{Wei Dai}
\address{
School of Mathematical Sciences\\
 Zhejiang University of Technology\\ 
 Hangzhou 310023,
 China}
\email{daiw23@zjut.edu.cn}

\author{Chengbo Wang}
\address{School of Mathematical Sciences\\
Zhejiang Normal University\\
Jinhua 321004, China}
\email{wangcbo@zjnu.edu.cn}

\date{\today}

\begin{document}
\bibliographystyle{plain}

\begin{abstract}
The main purpose of this paper is to study the global solvability for a general class of quasilinear \textit{shifted} wave equations on hyperbolic spaces, for  smooth initial data with small amplitude. 
In contrast to the case of Euclidean spaces, when the space dimension is three, we do not need to assume structural conditions like the null conditions to ensure global existence.
To achieve this, we establish the energy and local energy estimates for perturbed wave operators on $\rR\times \rH^n$. These estimates allow time-dependent metric perturbations and require only suitable smallness together with polynomial decay in the radial variable $r$. 
As a byproduct,  for semilinear problems with power-type nonlinearities and radial data, 
we also obtain global solutions with low-regularity. In particular, we prove an analog of the radial Glassey conjecture on  hyperbolic space.
\end{abstract}

\keywords{Morawetz estimates, integrated local energy decay,  Strauss conjecture, Glassey conjecture, conformally compact manifold,
asymptotically hyperbolic manifolds}

\subjclass[2020]{58J45, 35B45, 35L70, 35L15, 35L05, 35B40}

\maketitle

\section{Introduction}\label{Sn:I}

The main purpose of this paper is to establish small-data global existence results for the Cauchy problem for a class of typical quasilinear \textit{shifted} wave equations on hyperbolic space:
\begin{Eq*}
\square_{\rH}\phi=F(\phi,\partial_t\phi,\partial_t^2\phi).
\end{Eq*}
For this purpose, we derive energy and local energy estimates adapted to perturbed wave operators.
It is worth emphasizing that these estimates do not require the perturbation to be static; rather, they only assume
polynomial decay in the geodesic distance
together with suitable smallness conditions.
Finally, as a byproduct, we also present related results on the critical exponents for semilinear problems with power-type nonlinearities, analogous to those appearing in the study of the \emph{Strauss} and \emph{Glassey} conjectures in Euclidean space.

\subsection{Hyperbolic space}
We begin by recalling the geometric setting and introducing the class of equations under consideration.
As is well known, the $n$-dimensional (real) hyperbolic space $\rH^n$ is a complete simply connected manifold of constant negative curvature $-1$, 
and it can be viewed as an embedded submanifold of Minkowski space. 
To be more specific, inside the forward light cone $\Lambda=\{(\tau,y)\in \rR\times\rR^n: |y|<\tau\}$, we introduce the new coordinates $(s,r,\omega)$ satisfying
\begin{Eq*}
\tau=e^s\cosh r,\qquad y=e^s\sinh r\omega,\qquad s\in\rR,\quad r\in[0,\infty),\quad \omega\in\rS^{n-1}.
\end{Eq*}
Then the hyperbolic space $\rH^n=\Lambda_{s=0}$ is the submanifold endowed with the metric induced from the Minkowski metric $m=-\d\tau^2+\d y^2$.

Let $\Theta=(\theta^A)_{A=1}^{n-1}$ be the standard orthogonal coordinate system on the $(n-1)$-dimensional sphere $\rS^{n-1}$. 
Then the standard metric on $\rH^n$ is given by
$\d r^2+\sinh^2 r \d\Theta^2$, 
where
\begin{Eq*}
\d\Theta^2:=\sg_{AB}\d\theta^A \d\theta^B
=(\d\theta^1)^2+\sum_{A=2}^{n-1}\kl(\prod_{D=1}^{A-1}\sin^2\theta^D\kr)(\d\theta^A)^2.
\end{Eq*}

Let $\M_0=\rR\times \rH^n$ be the spacetime manifold equipped with the metric
\begin{Eq}\label{Eq:g0}
g_0=-\d t^2+\d r^2+\sinh^2 r \d\Theta^2.
\end{Eq}
With $\rho=(n-1)/2$, the natural \textit{shifted} wave operator is defined by
\begin{Eq*}
\square_{\rH}:=e^{(2+\rho)t}\square_m e^{-\rho t}
=-\partial_t^2+\Delta_{\rH}+\rho^2
=\nabla^\alpha\nabla_\alpha+\rho^2.
\end{Eq*}
Here and in what follows, we adopt the Einstein summation convention. 
Capital indices $A,B,\dots$ range from $1$ to $n-1$, 
while Greek indices $\alpha,\beta,\dots$ run over $t$, $r$, 
and the angular variables. 
Indices are raised and lowered using the metric  $g_0$.
 Here $\nabla$ denotes the Levi-Civita connection of $g_0$.
Moreover,
\begin{Eq*}
\Delta_{\rH}=\partial_r^2+\frac{n-1}{\tanh r}\partial_r+\frac{1}{\sinh^2 r}\Delta_{\rS},
\qquad
\Delta_{\rS}=\sg^{AB}\snabla_A\snabla_B,
\end{Eq*}
where $\snabla$ denotes the covariant derivative associated with the metric $\sg$.

\subsection{Quasilinear wave equations}
We are now ready to present the main problem studied in this paper. 
More precisely, we consider quasilinear wave equations of the form
\begin{Eq}\label{Eq:P1}
\begin{cases}
\square_{\rH}\phi
=F(\phi,\partial_t\phi,\partial_t^2\phi)
=F_1(\phi,\partial_t\phi)\partial_t^2\phi+F_2(\phi,\partial_t\phi),\\
(\phi,\partial_t\phi)|_{t=0}
=\varepsilon(\psi_0,\psi_1).
\end{cases}
\end{Eq}
Here we assume that $F_1$ and $F_2$ are both globally smooth, and satisfy
$F_1(0,0)=F_2(0,0)=F_2'(0,0)=0$.

Before presenting our main result on the quasilinear wave equations, let us briefly recall the rich history.
There is an extensive literature on the corresponding problems in Euclidean space.
 Let us recall only the related results for $n\ge 3$. When the nonlinearity does not depend on the unknown $\phi$, it is known that we have global existence for $n\ge 4$ and almost global existence
($T_\varepsilon\ge \exp(c\varepsilon^{-1})$), 
 for $n=3$,
 where $\varepsilon$ is the amplitude of the initial data,
  see John--Klainerman \cite{JK84}, Klainerman \cite{MR784477} and references therein. 
When $n=3$, in general, we can only expect almost global existence, as John \cite{Jo81} proved blow-up at almost global time scales for $\Box \phi=\phi_t^2$ and $\Box \phi=\phi_t\phi_{tt}$.
 On the other hand, when the nonlinearity satisfies Klainerman's null condition, the problem admits global solutions, 
see Klainerman \cite{Klai86} and Christodoulou \cite{MR820070}.

When the nonlinearity depends also on the unknown $\phi$, the situation is even more complicated. Global existence is known for $n\ge 5$. For $n=4$, if the term $\phi^2$ is absent, we also have global existence. Otherwise, there is almost global existence, with lower bound of the lifespan $T_\varepsilon\ge \exp(c\varepsilon^{-2})$, see H\"ormander \cite{MR1120284}, Li--Zhou \cite{MR1386767}, Lindblad--Sogge \cite{LdSo96}. When $n=3$, we have $T_\varepsilon\ge c\varepsilon^{-2}$ in general and $T_\varepsilon\ge \exp(c\varepsilon^{-1})$ when  the term $\phi^2$ is absent, see Lindblad \cite{Ld90-2QLW}.
For a thorough discussion of the history, we refer the readers to Sogge \cite{So08} and
 Li--Zhou
\cite{MR3729480}.
For $n=3$, the general structure conditions which ensure global existence remain open. Despite that, we have the so-called weak null conditions, which are closely related to global existence and  have drawn much attention, see Lindblad--Rodnianski \cite{LdRo03}, Alinhac \cite{MR2244605}, Lindblad--Tohaneanu \cite{MR4810084} and references therein.

Turning to hyperbolic space, to the best of the authors'
knowledge, there are not many works concerning the nonlinear (shifted) wave equations. Most of them are devoted to the semilinear wave equations with power-type nonlinearity in the unknown:
$$
\square_{\rH}\phi =F_p(\phi)\ ,$$
where $F_p$ satisfies $|F_p(\phi)|+|\phi F'_p(\phi)|\le C|\phi|^p$ for some $p>1$.
Recall that, in Euclidean space $\R^3$, we have finite-time blow-up for $\Box \phi=\phi^2$, whenever the initial data is nontrivial, see John \cite{MR526180}.
In contrast to the Euclidean case, thanks to the improved dispersive estimates on hyperbolic space, we have global existence for
$$
\square_{\rH}\phi =\phi^2$$
for any spatial dimension $n\ge 2$, see Fontaine \cite{Fo97} for $n=2,3$ and
Sire--Sogge--Wang \cite{SSW19} for 
$n\ge 2$\footnote{Direct application of \cite[Theorem 1.4]{SSW19} gives us the result for $n\in [2, 7]$. For the smooth nonlinearity, $\phi^2$, in the high dimensional case, the argument could essentially be adapted to show global existence. For example, for $n\ge 3$, we could choose $q\in (2, 2(n-1)/(n-2))$, $p_0=2$ and $p_1=\infty$ in \cite[(36)-(37)]{SSW19}, which is strong enough to yield global existence, by taking derivatives of sufficient high order (say $3n/q-n$) to the equation.}.

Inspired by these results, it seems natural to expect global results for more general nonlinearities, compared with the Euclidean case. In particular, it is interesting to see if the null condition is still required for $n=2, 3$.


The following is our first main result, which suggests that we do not need to assume any structural conditions to ensure global existence for quasilinear wave equations on hyperbolic space, at least for $n\ge 3$.

\begin{theorem}\label{Tm:P1}
Let $n\geq 3$. Consider the Cauchy problem \eqref{Eq:P1}. 
Assume that the initial data $\psi_0,\psi_1\in C_0^\infty(\rH^n)$. 
Then, if $\varepsilon>0$ is sufficiently small, 
the Cauchy problem admits a global smooth solution.
\end{theorem}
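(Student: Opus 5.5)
We prove \Tm{Tm:P1} by a continuity (bootstrap) argument built on energy and local energy estimates for the perturbed operator
\begin{Eq*}
P=\square_{\rH}-F_1(\phi,\partial_t\phi)\partial_t^2,
\end{Eq*}
viewed as $\nabla^\alpha\nabla_\alpha+\rho^2+h^{\alpha\beta}\partial_\alpha\partial_\beta$ with a small time-dependent perturbation $h^{tt}=-F_1(\phi,\partial_t\phi)$. The vector fields commuting with $\square_{\rH}$ are $\partial_t$ and the Killing fields of $\rH^n$ (rotations and boosts of the ambient Minkowski space). The boosts carry exponential weights in $r$, so we use only $\partial_t$ and rotations $\Omega$, supplemented by elliptic regularity to recover radial and higher spatial derivatives. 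Set $Z^a=\partial_t^j\Omega^k$ with $|a|\le N$, $N$ large.

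The first step is a weighted local energy (Morawetz) estimate for $\square_{\rH}$ on $\R\times\rH^n$, $n\ge3$. We use the multiplier $\tanh r\,\partial_r+\tfrac{n-1}{2}$, or a variant of it. The key feature of the hyperbolic setting is that the shifted operator has no zero-energy obstruction: the spectrum of $-\Delta_{\rH}-\rho^2$ starts at $0$, but the Hardy-type inequality on $\rH^n$ yields a positive bulk term. This should give
\begin{Eq*}
\sup_t E(u)(t)+\|\langle r\rangle^{-1-\delta}(|\partial u|+|u|)\|_{L^2_{t,x}}^2\les E(u)(0)+\Big|\int\!\!\int Pu\,(\partial_t u+Xu)\Big|,
\end{Eq*}
where the energy $E$ is the shifted energy, coercive for $n\ge3$ by Poincaré/Hardy on $\rH^n$. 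The perturbation terms are absorbed provided $|h|+\langle r\rangle|\partial h|\les\delta\langle r\rangle^{-1-\delta}$ or provided $\|\partial h\|_{L^1_tL^\infty}$ is small. This is the step where the smallness and polynomial radial decay of the perturbation enter.

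The second step is decay. For solutions with finite local energy norm, weighted Sobolev inequalities on $\rH^n$ hold: the volume growth $e^{(n-1)r}$ gives $|u|\les e^{-\rho r}\|Z^{\le n}u\|_{L^2(\text{shells})}$ after the rotation Sobolev embedding. Combined with the integrated local energy bound, this yields
\begin{Eq*}
\|Z^{\le N/2}(\phi,\partial\phi)\|_{L^2_tL^\infty_x}\les\varepsilon,
\end{Eq*}
together with pointwise bounds carrying $e^{-\rho r}\langle r\rangle^{C}$ spatial decay. These are integrable in time in $L^2_t$ against the weight. In particular, a quadratic nonlinearity such as $\phi_t^2$ satisfies
\begin{Eq*}
\|Z^a(\phi_t^2)\,Z^a\phi_t\|_{L^1_{t,x}}\les\|\phi\|_{LE}^2\,\|\phi\|_{L^2_tL^\infty}.
\end{Eq*}
The same holds for the $\phi^2$ term, since the local energy norm also controls $u$, unlike in $\R^3$. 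Thus all terms of $Z^aF$ are bounded by $\varepsilon\cdot(\text{bootstrap norm})^2$, with no null structure required.

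The third step closes the argument. Assume $\sum_{|a|\le N}\big(\sup_t E(Z^a\phi)^{1/2}+\|Z^a\phi\|_{LE}\big)\le 2C_0\varepsilon$ on $[0,T]$. Commuting $Z^a$ with $P$ gives commutator terms $[Z^a,F_1\partial_t^2]$ that are quadratic and handled as above. The top-order terms $F_1\partial_t^2Z^a\phi$ are kept in $P$. The estimate of the first step then improves the constant to $C_0\varepsilon$, and local well-posedness together with continuity gives global existence.

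The main obstacle is the first step: establishing the local energy estimate with a time-dependent quasilinear perturbation. This requires handling $\partial_t h$ terms from the energy identity and the loss of control at $r\to0$ and $r\to\infty$ for the multiplier. We would first try the multiplier $f(r)\partial_r+\tfrac12(f'+(n-1)f\coth r)$ with $f=\tanh r$-type, and verify positivity of the resulting bulk term via the exact spectral gap identity for $\Delta_{\rH}+\rho^2$ in dimension $n\ge3$.
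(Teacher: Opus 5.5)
Your architecture matches the paper's proof: energy and local energy estimates for $\square_{\rH}-F_1\partial_t^2$, viewed as a small time-dependent perturbation $\gamma^{tt}=-F_1$; commutation with Killing fields; exponential pointwise decay from \eqref{Eq:hti}; and quadratic terms bounded by (weighted $L^\infty$)$\times LE\times LE$. The paper iterates rather than bootstraps, which is immaterial. The genuine gap is your first step. The estimate you assert is essentially the paper's \Tm{Tm:T}, the heart of the argument, and the multiplier you propose does not give it. Since $\tanh r\,\partial_r+\rho=\tanh r\,(\partial_r+w)$ with $w=\rho\coth r$, your choice is the paper's current with $X=h\partial_r$, $\chi=hw$ (plus a divergence correction) and $h=\tanh r$. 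For any such $h$ the bulk term is
\[
h\,\square_{\rH}f\,(\partial_rf+wf)-\frac{h'}{2}\bigl(|\partial_tf|^2+|\partial_rf+wf|^2\bigr)-\Bigl(\frac{h}{\tanh r}-\frac{h'}{2}\Bigr)|\spartial f|^2-\frac12 Wf^2,
\]
with $W=\frac{(n-1)(n-3)}{2\sinh^2 r}\bigl(\frac{h}{\tanh r}-\frac{h'}{2}\bigr)$. So $\partial_tf$ and $\partial_rf+wf$ are controlled only with weight $h'=\operatorname{sech}^2 r\approx 4e^{-2r}$.

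For $n=3$ this cannot close. The energy yields, via \eqref{Eq:hti}, only the decay $r^{1/2}\sinh^{-\rho}r\approx r^{1/2}e^{-r}$ for the coefficients $\partial_{t,r}F_1(\phi,\partial_t\phi)$ and for the low-order factors of the nonlinearity. Energy-class functions saturate this rate, and it does not beat $e^{-2r}$. Hence neither errors such as $h\,\partial_r\gamma^{tt}|\partial_tf|^2$ nor the quadratic terms can be absorbed.

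The missing idea is a weight that varies at every scale. The paper takes $h_R=r/(R+r)$, for which $h_R'\approx R^{-1}$ on $r\approx R$, and then takes the supremum over dyadic $R$; this produces the polynomial weights $2^{-j/2}$ of the $LE$ norm. Positivity is not a spectral-gap fact, since $-\Delta_{\rH}-\rho^2$ has no gap. It comes from the pointwise identity $w^2+w'-\rho^2=\frac{(n-1)(n-3)}{4\sinh^2 r}$. This identity makes the energy density nonnegative up to the divergence $\hpartial_r(wf^2)$, and makes $W\ge 0$ when $n\ge3$ and $h/\tanh r\ge h'/2$. Control of $f/r$ then comes from replacing $w$ by $\tw=w-1/r$ inside $|\partial_rf+\tw f|^2$.

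There are two further points.

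First, your bound $\|Z^a(\phi_t^2)Z^a\phi_t\|_{L^1_{t,x}}\lesssim\|\phi\|_{LE}^2\|\phi\|_{L^2_tL^\infty}$ cannot hold, because three $L^2_t$ factors do not control an $L^1_t$ integral (concentrate all three on a short time interval). One factor must be taken in $L^\infty_t$. The natural choice is the low-order one: \eqref{Eq:hti} plus Sobolev on $\rS^{n-1}$ gives $|Z^{\le k}\phi|\lesssim\varepsilon\, r^{1/2}\sinh^{-\rho}r$ uniformly in time. This exponential decay absorbs the polynomial weights of the two $LE$ factors, exactly as in the paper's bounds for $I_{11}$ and $I_{12}$.

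Second, discarding the boosts is unnecessary. We have $\sum_i|K_if|^2=|\partial_rf|^2+\cosh^2 r|\spartial f|^2$ and $\cosh r|\spartial f|=\coth r\,|\snabla f|_{\sg}$, so the $K_i$ are bounded combinations of $\partial_r$ and rotations. The paper uses them for three reasons:
\begin{itemize}
\item they commute exactly with $\square_{\rH}$;
\item they give $|\partial_rf|\lesssim|Kf|$, which is needed for $\partial_r\gamma^{tt}$ in the local energy identity;
\item by the isometry invariance of $\|\tpartial f\|_{L^2}$, they remove the degeneracy of \eqref{Eq:hti} at $r=0$.
\end{itemize}
Your substitute ($\partial_t$, $\Omega$, elliptic regularity) can work here, because the nonlinearity contains only time derivatives. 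However, the elliptic step must be local or weighted. On $\rH^n$ one has $\|\nabla^2u\|_{L^2}^2=\|\Delta_{\rH}u\|_{L^2}^2+(n-1)\|\nabla u\|_{L^2}^2$, and $\|\nabla u\|_{L^2}$ is not controlled by the shifted energy.
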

\begin{remark}
The argument actually applies to a broader class of quasilinear equations. 
For instance, one may allow $F_1$ and $F_2$ to depend on the coordinates $(t,r,\Theta)$ or on spatial derivatives of $\phi$, 
and one may also consider perturbations involving purely spatial or more general spacetime second-order derivatives. 
However, since hyperbolic space has no Killing vector fields of uniformly bounded size analogous to the Euclidean translations $\partial_i$, 
treating the full generality would introduce additional technicalities. 
For this reason, we restrict ourselves here to the model case \eqref{Eq:P1}, 
which already captures the main idea.
\end{remark}

As in the Euclidean case, one naturally expects to prove global-in-time existence by combining energy and local energy estimates with an argument analogous to the \emph{Klainerman--Sobolev} inequality, see, e.g., Keel--Smith--Sogge \cite{KSS04},
Metcalfe--Sogge \cite{MetSo06},
Hidano--Wang--Yokoyama \cite{HWY1}, and
Wang \cite{MR4662297}.
It is well known that local energy estimates are of fundamental importance and have a wide range of applications; see, for instance,
Metcalfe--Tataru \cite{MeTa12MA},
Andersson--Blue \cite{MR3418531}, 
Dafermos--Rodnianski--Shlapentokh-Rothman \cite{MR3488738},
Metcalfe--Sterbenz--Tataru
\cite{MR4101333} and
Ma \cite{MR4062461}. 

Suitable resolvent estimates can lead to local energy estimates
in the setting of conformally compact
asymptotically hyperbolic manifolds (cAHM), in the sense of Mazzeo--Melrose \cite{MM}; see  Section \ref{sec:cahm} for the definition and \Sn{Sn:A} for a more detailed discussion of the proof.
Similarly, there have also been many results on dispersive estimates and Strichartz estimates in hyperbolic space and cAHM; see, for instance, Tataru \cite{Ta01-2}, 
Anker--Pierfelice--Vallarino \cite{APV12}, Sire--Sogge--Wang--Zhang \cite{MR4169670, MR4432952}.
 However, these resolvent and dispersive tools typically require
the metric perturbation to be static and to decay very rapidly, and therefore
they are not suitable for the quasilinear problems considered here.
For this reason, we need to re-establish energy and local energy estimates adapted to more general perturbations.
We now state this key analytic estimate separately, since it may also be useful in other problems and will be used again for the semilinear applications below.

\subsection{Energy and local energy estimates for perturbed wave operators}
Let us first describe the spacetime manifold $(\M,g)$. 
Similar to Bony--H\"afner \cite{BoHa} and Sogge--Wang \cite{SW10} in the case of asymptotically Euclidean manifolds, 
we work on the same ambient manifold $\M=\M_0=\rR\times\rH^n$, 
but endow it with a different Lorentzian metric $g$. 
More precisely, in the coordinates $x=(t,r,\Theta)$, we write
$g=g_{\alpha\beta}(x)\d x^\alpha \d x^\beta$.

Let $(g^{\alpha\beta})$ be the inverse matrix of $(g_{\alpha\beta})$. We assume that $(g^{\alpha\beta})$ is a small perturbation of $(g_0^{\alpha\beta})$, and write
$\gamma^{\alpha\beta}:=g^{\alpha\beta}-g_0^{\alpha\beta}$.
We further assume that $\gamma^{\alpha\beta}\to 0$ as $r\to\infty$, so that the metric is asymptotically hyperbolic near spatial infinity.

To be slightly more general, we consider the perturbed wave operator
\begin{Eq}\label{Eq:eoP}
P:=\square_{\rH}+\nabla_\alpha(\gamma^{\alpha\beta}\nabla_\beta)+G^\beta\nabla_\beta+V,
\end{Eq}
in dimension $n\geq 3$, whose principal part agrees with that of the d'Alembertian operator $\square_g$. 
For the perturbation terms, we impose the following hypothesis.
\begin{hypothesis}\label{Hs:hoP}
Let $0<\varepsilon'\ll1$ and
\begin{Eq*}
M^\alpha:=\sqrt{|g_0^{\alpha\alpha}|}=\begin{cases}
 \kl(\sinh r\prod_{D=1}^{A-1}\sin\theta^{D}\kr)^{-1}, & \alpha=A,\\
1,& \al=t,r.
\end{cases}
\end{Eq*}
We assume that the metric perturbation $\gamma^{\alpha\beta} = \gamma^{\beta\alpha}$ satisfies the conditions
$|\gamma^{\alpha\beta} |\leq \varepsilon' M^\alpha M^\beta$ and 
\begin{Eq*}
\sum_{j=-\infty}^{\infty}\sup_{\rR\times A_j}
r\kl(\frac{|\partial_{t,r}\gamma^{\alpha\beta}|}{M^\alpha M^\beta}
+\kl<r\kr>\kl|\kl(
\partial_t\gamma^{tr},\partial_r\gamma^{rr},\snabla_A \gamma^{r A}
\kr)\kr|+
 r|\gamma^{rr}|
\kr)\le \varepsilon',
\end{Eq*}
where $A_j:=\{(r,\Theta):r\in[2^{j-1},2^{j+1}]\}$. 
Additionally, we assume that the lower-order perturbations $G$ and $V$ satisfy
\begin{Eq*}
\sum_{j=-\infty}^{\infty}\sup_{\rR\times A_j}
r\kl(
 | G^t|+ 
 \frac{|G^A|}{M^A}+
\kl<r\kr>|G^r|+
 r| V|
\kr)\leq \varepsilon'.
\end{Eq*}
\end{hypothesis}
Now, let
$w:=\frac{\rho}{\tanh r}=\frac{n-1}{2\tanh r}$.
We define the modified derivative of $f$ by
\begin{Eq*}
|\tpartial f|^2:=|\partial_t f|^2+\kl|\partial_r f+w f\kr|^2+|\spartial f|^2
=|\partial_t f|^2+\kl|\partial_r f+w f\kr|^2+\frac{\sg^{AB}\partial_A f\partial_B f}{(\sinh r)^2},
\end{Eq*}
which satisfies $\|\tpartial f\|_{L^2(\rH^n)} \approx \|\partial_t f\|_{L^2(\rH^n)} + \kl\|\sqrt{-\Delta_{\rH} - \tfrac{(n-1)^2}{4}}f\kr\|_{L^2(\rH^n)}$ when $n\geq 3$; see \Sn{Sn:eefuo} for a detailed discussion.
For later use, we also introduce the notation 
\begin{Eq*}
\hpartial_r f:=\partial_rf+2wf=\sinh^{1-n}r\partial_r(\sinh^{n-1}r f),
\end{Eq*}
which is the dual operator of $-\pa_r$ with respect to the $L^2(\M_0)$.

Next, on the time-space domain $S_0^T:=[0,T)\times\rH^n$,
we introduce the energy and local energy norms of $f$ as
\begin{Eq*}
\|f\|_{E(S_0^T)}:=\kl\|\kl(\tpartial f, \frac{f}r\kr)\kr\|_{L^\infty L^2(S_0^T)},\ 
\|f\|_{LE(S_0^T)}:=&\sup_{j}2^{-j/2}\kl\|\kl(\tpartial f, \frac{f}r\kr)\kr\|_{L^2L^2\kl([0,T] \times A_j\kr)}.
\end{Eq*}
Furthermore, we define the dual norm of local energy as
\begin{Eq*}
\|F\|_{LE^*(S_0^T)}:=\sum_{j}2^{j/2}\|F\|_{L^2L^2([0,T]\times A_j)}.
\end{Eq*}

We are now ready to present our result concerning the energy and local energy estimates.
\begin{theorem}[Energy and local energy estimates]\label{Tm:T}
Let $n\geq 3$. 
For the wave operator $P$ defined in \eqref{Eq:eoP}, 
whose coefficients satisfy \Hs{Hs:hoP}, we have
\begin{Eq}\label{Eq:Pgei}
\|f\|_{[E\cap LE](S_0^T)}\lesssim \|\tpartial f(0)\|_{L^2(\rH^n)}+\|P f\|_{[L^1L^2+LE^*](S_0^T)}
\end{Eq}
for any $T\in(0,\infty]$ and any compactly supported smooth function $f$,  
where the implicit constant depends only on $\varepsilon'$ arising in \Hs{Hs:hoP}.
\end{theorem}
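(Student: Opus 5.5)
We prove \eqref{Eq:Pgei} by the multiplier method: first for the unperturbed operator $\square_{\rH}$, then treating $\nabla_\alpha(\gamma^{\alpha\beta}\nabla_\beta)+G^\beta\nabla_\beta+V$ perturbatively, with the smallness $\varepsilon'$ in \Hs{Hs:hoP} absorbing the errors. Throughout we use the conjugated unknown $u=\sinh^{\rho} r\, f$, for which $\partial_r u=\sinh^\rho r(\partial_r f+wf)$. This conjugation turns the $\rho^2$ shift into a nonnegative potential. A direct computation gives
\[
\sinh^{\rho}r\,\square_{\rH}f=-\partial_t^2u+\partial_r^2u+\frac{\Delta_{\rS}u}{\sinh^2 r}-\frac{\rho(\rho-1)}{\sinh^2 r}u .
\]
For $n\ge3$ we have $\rho\ge1$, so $\rho(\rho-1)\ge0$. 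Hence the energy identity from the multiplier $\partial_t f$ controls $\|\tpartial f\|_{L^2}$, and a Hardy inequality on $\rH^n$ (valid for $n\ge3$) controls $\|f/r\|_{L^2}$ by $\|\tpartial f\|_{L^2}$. Together these give the $E$ part of the estimate with right-hand side $\|\tpartial f(0)\|_{L^2}+\big|\int Pf\,\partial_t f\big|$.

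\textbf{Local energy for $\square_{\rH}$.} We use a Morawetz multiplier $\beta(r)\partial_r u+\tfrac12\beta'(r)u$ in the $u$-variables, with $\beta$ bounded and increasing. The standard choice is $\beta=\beta_j(r)=r/(r+2^j)$, followed by a supremum over $j$, or equivalently a summed weight $\beta=\sum_j 2^{-\delta|j-k|}r/(r+2^k)$. Integrating by parts over $[0,T]\times\rH^n$ yields the following positive bulk terms:
\begin{itemize}
\item $\beta'(|\partial_t u|^2+|\partial_r u|^2)$;
\item $\big(\tfrac{\beta}{\tanh r}\cdot\tfrac1{\sinh^2r}\big)|\snabla u|^2$ from the angular part, which is nonnegative since $\beta\ge0$ and $\tanh r\le r$;
\item the potential term $\tfrac{\rho(\rho-1)\beta\cosh r}{\sinh^3 r}u^2$, again nonnegative;
\item the term $-\tfrac14\beta'''u^2$, which is positive near the scale $2^j$ for our choice of $\beta$ and yields the $f/r$ control there.
\end{itemize}
The boundary terms at $t=0,T$ are bounded by the energy. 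Returning to $f$ (note $|\partial_r u|=\sinh^\rho r\,|\partial_rf+wf|$, and the measure is $\sinh^{n-1}r\,dr$), we obtain $2^{-j/2}\|(\tpartial f,f/r)\|_{L^2L^2([0,T]\times A_j)}$ bounded by $\|\tpartial f(0)\|+\|f\|_{E}^{1/2}\|\square_{\rH} f\|_{L^1L^2+LE^*}^{1/2}$. For the $LE^*$ pairing we use $\|(\partial_r u+\beta' u)/\sinh^\rho r\|$ on $A_k$, which is controlled by $\|f\|_{LE}2^{k/2}$. Summing the $E$ and $LE$ bounds and using Cauchy--Schwarz proves \eqref{Eq:Pgei} for $\square_{\rH}$.

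\textbf{Perturbations.} We apply the same multipliers ($\partial_tf$, and $\sinh^{-\rho}r(\beta\partial_ru+\tfrac12\beta'u)$) to $Pf$ and move the perturbation terms to the error side.
\begin{itemize}
\item The divergence term $\nabla_\alpha(\gamma^{\alpha\beta}\nabla_\beta f)$ is integrated by parts once. This produces quadratic forms $\gamma^{\alpha\beta}\nabla_\alpha f\nabla_\beta(\text{multiplier})$, together with terms carrying $\partial_{t,r}\gamma$ and boundary terms with $|\gamma|\le\varepsilon'M^\alpha M^\beta$. The boundary terms are absorbed into the energy.
\item The bulk terms are bounded dyadically by $\sup_{A_j}r|\partial\gamma|\cdot 2^{-j}\|(\tpartial f,f/r)\|^2_{L^2(A_j)}\le(\sup_{A_j} r|\partial\gamma|)\|f\|_{LE}^2$. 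Summing over $j$ uses exactly the $\ell^1$ hypothesis, giving $\varepsilon'\|f\|_{LE}^2$.
\item Similarly, the terms with $G$ and $V$ give $\varepsilon'\|f\|_{LE}^2$.
\end{itemize}

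\textbf{Main obstacle.} The multiplier contains $w f$ and $\partial_r f$, and $w\sim 1/r$ near $0$ but $w\to\rho$ at infinity. At large $r$ the zeroth-order pieces of the multiplier, of size $O(1)$ times $f$, are paired with $\partial\gamma$ and $\gamma^{rr}$. The modified derivative $\tpartial$ controls only the combination $\partial_rf+wf$, not $\partial_r f$ and $f$ separately; the only separate control of $f$ available is through $f/r$. This is why \Hs{Hs:hoP} demands the extra $\langle r\rangle$ weight on $\partial_t\gamma^{tr},\partial_r\gamma^{rr},\snabla_A\gamma^{rA}$, on $r\gamma^{rr}$, and on $G^r$, $rV$. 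We must therefore organise the integration by parts so that every term in which $f$ is not differentiated, or in which $\partial_r f$ appears alone, carries one of these specially weighted coefficients. The resulting weight $\langle r\rangle r$ then converts $|f|^2$ into $|f/r|^2$, which the $LE$ norm controls. Writing $\partial_r f=(\partial_rf+wf)-wf$ systematically and commuting the $\hpartial_r$ duality should achieve this.

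Once the errors are bounded by $C\varepsilon'(\|f\|^2_{E\cap LE})$, they are absorbed into the left-hand side for $\varepsilon'$ small, which yields \eqref{Eq:Pgei} for all $T\in(0,\infty]$.
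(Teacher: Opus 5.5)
Your overall architecture matches the paper's. The energy step is correct: your potential $\rho(\rho-1)/\sinh^2 r$ is the paper's $\frac{(n-1)(n-3)}{4\sinh^2 r}$, and $|\partial_r u|=\sinh^\rho r\,|\partial_r f+wf|$. Dyadic $\ell^1$ summation with absorption of errors is also what the paper does.

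\textbf{The local energy step for $\square_{\rH}$ fails as you computed it.} With the multiplier $\beta\partial_r u+\frac12\beta' u$, the Lagrangian correction $\frac12\beta' u$ exactly cancels the $|\partial_t u|^2$ term produced by $\beta\partial_r u$. The bulk is
\[
\beta'|\partial_r u|^2-\tfrac14\beta''' u^2+\frac{\beta\cosh r}{\sinh^3 r}\bigl(|\snabla u|^2+\rho(\rho-1)u^2\bigr),
\]
with no $|\partial_t u|^2$ term. So the $\partial_t f$ component of the $LE$ norm is not controlled, and that component is needed later to absorb perturbation errors. Moreover, for $\beta=r/(r+2^j)$ one has $\beta'''=6\cdot 2^j(r+2^j)^{-4}>0$ everywhere. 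Hence $-\frac14\beta''' u^2$ is negative everywhere, not positive near $r\approx 2^j$. It cannot supply the $f/r$ control and must itself be absorbed.

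\textbf{The repair is the paper's route.} Use $\beta\partial_r u$ alone. Since $\sinh^{-\rho}r\,\beta\partial_r u=h_R(\partial_r f+wf)$, this is the paper's choice $X=h_R\partial_r$, $\chi=h_Rw$. It gives $\frac12\beta'(|\partial_t u|^2+|\partial_r u|^2)$, plus angular and potential terms whose coefficient $\frac{\beta}{\tanh r}-\frac{\beta'}{2}$ is nonnegative. The zeroth-order control then comes from a Hardy identity, valid because $u(0)=0$:
\[
\int\beta'|\partial_r u|^2\,dr=\int\beta'|\partial_r u-u/r|^2\,dr+\int\frac{|\beta''|}{r}u^2\,dr ,
\]
where $|\beta''|/r\approx 2^{-3j}$ for $r\approx 2^j$. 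This is exactly the paper's divergence correction $Y=-\frac12h_R'\tw f^2\partial_r$ with $\tw=w-\frac1r$. A smaller Lagrangian coefficient such as $\frac14\beta'$, combined with the same Hardy bound to absorb $-\beta'''$, would also work.

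\textbf{In the perturbative part, the mechanism you name is not enough.} Splitting $\partial_r f=(\partial_r f+wf)-wf$ and integrating by parts in space does not suffice. In the energy identity, $\frac12\partial_t\gamma^{rr}(\partial_r f)^2$ leaves $\frac{\rho^2}{2}\partial_t\gamma^{rr}f^2$, plus a term $\partial_t\gamma^{rr}\hpartial_r(wf^2)$.
\begin{itemize}
\item The first has coefficient only $O(\varepsilon'(r)/r)$, while $LE$ controls only $2^{-3j}\|f\|^2_{L^2L^2(A_j)}$ on $A_j$.
\item Spatial integration by parts of the second produces $\partial_t\partial_r\gamma^{rr}wf^2$, which \Hs{Hs:hoP} does not control at all.
\end{itemize}
The paper integrates by parts in $t$ through correction currents: $Y^t\ni-\frac{\rho^2}{2}\gamma^{rr}f^2$, and the pair $(Y^t,Y^r)\ni(-\frac12\partial_r\gamma^{rr}wf^2,\frac12\partial_t\gamma^{rr}wf^2)$. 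These move $\partial_t$ from $\gamma^{rr}$ onto $f$, and the mixed derivative cancels between $\partial_tY^t$ and $\hpartial_rY^r$. What remains are terms like $\gamma^{rr}f\partial_t f$ and $\partial_r\gamma^{rr}wf\partial_t f$, plus boundary terms. These are precisely what the extra weights on $\gamma^{rr}$ and $\partial_r\gamma^{rr}$ in \Hs{Hs:hoP} are designed to handle.
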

As usual,
the notation $a\lesssim b$ (or equivalently $b\gtrsim a$) 
means that there exists a constant $C>0$ such that $a\leq Cb$,
where $C$ may vary from line to line.
Similarly, $a\approx b$ indicates that both $a\lesssim b$ and $b\lesssim a$ hold.

The idea of the proof is based on the multiplier method.
We first consider the unperturbed equation, for which an energy inequality can be obtained without difficulty.
However, unlike the wave equation in Euclidean space, the corresponding energy density here is not strictly positive.
To address this, we introduce a correction and thereby obtain a positive energy density involving the modified derivative $\tpartial f$ (see \Sn{Sn:eefuo}).
For technical reasons, the case $n=2$ is already excluded at this stage.

Next, by taking a weighted derivative in the $r$-direction as a multiplier, we establish an appropriate local energy estimate.
It should be emphasized that, in contrast to the Euclidean case, the local energy for $f$ arises from modifying the form of the corrected derivative rather than from a direct computation (see \Sn{Sn:leefuo}).

Finally, we repeat the above procedure for the perturbed wave equation.
Under suitable assumptions on the perturbation, we show that the error terms produced by the perturbation can ultimately be absorbed (see \Sn{Sn:efpo}).
This finally leads to \Tm{Tm:T}.

\begin{remark} 
Since the standard hyperbolic metric takes the form \eqref{Eq:g0}, one might naturally expect, by analogy with the Euclidean case, that energy or local energy should be weighted by $\sinh r$ and decomposed dyadically with respect to it. However, this theorem shows that the correct weight and dyadic decomposition are actually given by $r$ itself. This result directly explains why only polynomial decay of the metric perturbation is required in the present work.
\end{remark}

\subsubsection{Comparison with cAHM}\label{sec:cahm}
For comparison, we briefly recall the notion of conformally compact asymptotically hyperbolic manifolds (cAHM). Let $(\Hg^\circ,g)$ be a Riemannian manifold, where $\Hg$ is a compact manifold with boundary $\partial \Hg$ and $\Hg^\circ$ denotes its interior. Let $\lambda$ be a boundary defining function for $\Hg$. We say that $g$ is conformally compact if $\lambda^2 g$ extends smoothly to $\partial \Hg$ as a Riemannian metric. If, in addition,
$\lim_{\lambda\to 0+}|d\lambda|_{\lambda^2 g}^2=1$,
then $(\Hg^\circ,g)$ is said to be asymptotically hyperbolic.

More specifically, let $y=(y_1,\cdots,y_{n-1})$ be local coordinates on $\partial \Hg$. Then, in a collar neighborhood $(0,\epsilon)_\lambda\times \partial \Hg$, the metric $g$ takes the form
\begin{equation}\label{metric}
g=\frac{d\lambda^2}{\lambda^2}+\frac{h_{jk}(\lambda,y)dy^jdy^k}{\lambda^2},
\end{equation}
for some smooth family of metric tensors $h_{jk}$ on $\partial \Hg$.
 
A canonical example of this setting is hyperbolic space itself, for which
\begin{Eq*}
g=\frac{\d \lambda^2+\frac{1}{4}(1-\lambda^2)^2\d\Theta^2}{\lambda^2}\quad \text{on} \quad (0,1)\times\rS^{n-1},
\end{Eq*}
with $\d\Theta^2$ as above. Under the change of variables $\lambda=e^{-r}$, the metric can be written as
$\d r^2+\sinh^2 r\,\d\Theta^2$,
which is exactly the standard hyperbolic metric introduced earlier. By making a further change of variables such that
$
|z|=\frac{\sinh r}{1+\cosh r}$,
one also recovers the classical Poincar\'e disc model
$g=\frac{4\d z^2}{(1-|z|^2)^2}$ in $\{|z|<1\}$.

 It is well known that the sectional curvature of cAHM satisfies
$K=-1+O(\lambda)$.
Since $\lambda\sim e^{-r}$ near infinity, this indicates that, in terms of the radial variable $r$, a cAHM may be viewed as an exponential perturbation of the hyperbolic metric near spatial infinity.

Therefore, despite the smallness assumption, our framework is in several respects more general than the standard cAHM setting. First, we allow time-dependent perturbations. Second, even in the time-independent case, we do not require the metric to be static. Finally, even in the static case, our assumptions on the metric near spatial infinity require only polynomial decay and are therefore weaker than those in the cAHM framework.

\subsection{Semilinear wave equations with power-type nonlinearity}
In \Tm{Tm:P1}, we showed that quasilinear wave equations with standard smooth nonlinearities always guarantee the global existence of small-data solutions.
In Euclidean space, however, there is another classical class of semilinear wave equations whose nonlinearities are of power-type but have limited regularity:
\begin{Eq*}
\square_{\rR}\phi=F_p(\phi)~or~F_p(\partial\phi).
\end{Eq*}
Here the nonlinearity satisfies $|F_p(s)|+|s F_p'(s)|\les |s|^p$, and typical examples include $\pm|s|^{p-1}s$ and $\pm|s|^p$.

In particular, for nonlinearities of the form $|\phi|^p$, it is known that there exists a critical exponent
$p_S=\frac{n+1+\sqrt{n^2+10n-7}}{2(n-1)}$,
and hence global small-data solutions exist only when $p>p_S$ (and $p$ is not too large).
Such problems are usually referred to as the Strauss conjecture and related topics.
A more comprehensive discussion can be found in Wang \cite{Wang18}, including some corresponding results in asymptotically Euclidean spaces.

On the other hand, for nonlinearities of the form $|\partial\phi|^p$, it is known that there exists another critical exponent
$p_G=\frac{n+1}{n-1}$,
and hence global small-data solutions exist only when $p>p_G$.
Such problems are usually referred to as the Glassey conjecture and related topics.
A more comprehensive discussion can also be found in
Wang \cite{W15}, including some corresponding results in asymptotically Euclidean spaces.
It is worth noting that, for higher dimensions $n\geq 4$ and $p_G<p<2$, the regularity of this nonlinearity is too limited.
As a result, 
small-data global existence is only available in the radially symmetric setting.

More generally, results concerning combinations of these two types of nonlinearities can also be found in Hidano--Wang--Yokoyama \cite{HWY3}.
For the corresponding systems of equations, see for example Dai--Fang--Wang \cite{DFW19}.

We now return to hyperbolic space. In contrast with the Euclidean case, the geometry of $\rH^n$ provides stronger decay and integrability at spatial infinity. As a consequence, we expect that the critical-threshold picture familiar from the Strauss and Glassey conjectures changes substantially in the present setting. 

Thanks to the improved dispersive estimates \cite{Ta01-2, APV12}, the analogs of the Strauss conjecture on hyperbolic space have been fully investigated in
 \cite{Fo97, SSW19}, 
 where global existence was obtained for the sub-conformal range $1<p<1+4/(n-1)$.
Furthermore, under some natural spectral conditions, the improved Strichartz estimates and global results were extended for nontrapping cAHM, in \cite{MR4169670, MR4432952}.
 In addition, it turns out that the critical nature appears for the logarithmic nonlinearities, at least for $\rH^n$ with $n=2,3$, see
Wang--Zhang \cite{WZ25}
and Zhang
\cite{Z23-p}.

Somewhat surprisingly, there seem to be no corresponding advances for nonlinearities involving $\pa \phi$, which  is the part relevant to the Glassey conjecture. With the help of  the local energy estimates in \Tm{Tm:T}, we establish a radial small-data global existence result for such nonlinearities.
More precisely, we have the following theorem.
\begin{theorem}\label{Tm:P2}
Let $n\geq 3$, $1<p_1<\frac{n+2}{n-2}$, and $1<p_2,p_3<\frac{n}{n-2}$. Consider the equation
\begin{Eq}\label{Eq:P2}
\begin{cases}
P\phi=F_{p_1}(\phi)+F_{p_2}(\partial_t\phi)+F_{p_3}(\partial_r\phi),\\
\phi(0,r)=\varepsilon\psi_0(r),\quad \partial_t\phi(0,r)=\varepsilon\psi_1(r),
\end{cases}
\end{Eq}
where $P$ is the radially invariant operator defined in \eqref{Eq:eoP}.
Assume that
\begin{Eq}\label{Eq:hoP2}
\sup_{\rR\times\{r<1\}}\kl|\kl(\nabla^{\leq 2} \gamma,\nabla^{\leq 1}(G, V)\kr)\kr|
&\lesssim\varepsilon',\\
\sum_{j=0}^{\infty}\sup_{\rR\times A_j}r^2\kl|\kl(\partial_{t,r}^{\leq 2}\gamma,\partial_{t,r}^{\leq 1}(G,V)\kr)\kr|
&\lesssim \varepsilon'.
\end{Eq}
In the radial setting, these assumptions 
in particular imply \Hs{Hs:hoP}.
Assume that the radial initial data satisfy
\begin{Eq}\label{Eq:coid2}
\|\partial_r^{\leq 2}\psi_0\|_{L^2(\rH^n)}+\|\partial_r^{\leq 1}\psi_1\|_{L^2(\rH^n)}<1.
\end{Eq}
Then, for sufficiently small $\varepsilon$,
the equation admits a global solution $\phi$ satisfying
\begin{Eq*}
\|r^{-\frac{1}{2}}(\sinh r)^{\frac{n-1}{2}}\partial_{t,r}^{\leq 1}\phi\|_{L^\infty L^\infty(S_0^\infty)}+\|\partial_{t,r}^{\leq 1}\phi\|_{[E\cap LE](S_0^\infty)}\lesssim\varepsilon.
\end{Eq*}
\end{theorem}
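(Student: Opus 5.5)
Our approach is a contraction (or continuity) argument in the space of radial functions $\phi$ with
\begin{Eq*}
\|\phi\|_X:=\|\partial_{t,r}^{\leq 1}\phi\|_{[E\cap LE](S_0^T)}+\|r^{-\frac12}(\sinh r)^{\frac{n-1}{2}}\partial_{t,r}^{\le 1}\phi\|_{L^\infty L^\infty(S_0^T)}\le C_0\varepsilon,
\end{Eq*}
uniformly in $T$. The linear input is \Tm{Tm:T}, applied both to $\phi$ and to the derivative $\partial_t\phi$. For the spatial derivative $\partial_r\phi$ we do not commute directly. Instead, we recover $\partial_r^2\phi$ from the equation, writing $\partial_r^2\phi=\partial_t^2\phi-\frac{n-1}{\tanh r}\partial_r\phi-\rho^2\phi+(\text{perturbation})+F$ and absorbing the small $\gamma^{rr}\partial_r^2\phi$ term. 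Near $r=0$ we instead use elliptic regularity for the radial Laplacian. This gives $\|\partial_{t,r}^{\le1}\phi\|_{E\cap LE}$ controlled by $\|\tpartial\partial^{\le1}\phi(0)\|_{L^2}$, which is $\lesssim\varepsilon$ by \eqref{Eq:coid2} and the equation at $t=0$, plus $\|\partial_t^{\le1}(P\phi)\|_{L^1L^2+LE^*}$, plus commutator terms $[P,\partial_t]\phi$. The commutators involve only $\partial_t\gamma,\partial_tG,\partial_tV$ times $\partial^{\le2}\phi$. By \eqref{Eq:hoP2} they are bounded in $LE^*$ by $\varepsilon'\|\partial^{\le1}\phi\|_{LE}$ (using $\sum_j 2^{j}\sup_{A_j}|\cdot|\lesssim\varepsilon'$), and are therefore absorbed.

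The second ingredient is a radial weighted Sobolev inequality replacing Klainerman--Sobolev. For radial $u$ on $\rH^n$ we aim to show
\begin{Eq*}
r^{-\frac12}(\sinh r)^{\frac{n-1}{2}}|u(r)|\lesssim \sup_j 2^{-j/2}\|(\tpartial u,u/r)\|_{L^2(A_j)}\ \text{(pointwise in }t\text{, or after }L^2_t\text{ in the LE version)}.
\end{Eq*}
We prove this by writing $v=(\sinh r)^{\rho}u$, so that $\partial_r v=(\sinh r)^\rho(\partial_ru+wu)$ and $\|v\|_{L^2(dr)}=\|u\|_{L^2(\rH^n)}$. A one-dimensional fundamental theorem of calculus on the annulus $A_j$, $|v(r)|^2\lesssim r^{-1}\|v\|^2_{L^2(A_j)}+\|v\|_{L^2(A_j)}\|\partial_rv\|_{L^2(A_j)}$, then gives $|v|\lesssim r^{1/2}\cdot(r^{-1/2}\text{-scaled energy})$. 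Applied to $\partial^{\le1}\phi$ with the $L^\infty_tL^2$ energy and $u/r$ bounds, this yields the $L^\infty L^\infty$ part of $\|\phi\|_X$. For $r<1$ we use the ordinary $H^2$ Sobolev embedding in $n$ dimensions, which is where two derivatives are needed.

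The nonlinear estimate is the main obstacle, namely bounding $\|\partial_t^{\le1}F_{p_i}\|_{L^1L^2+LE^*}$ by $C\|\phi\|_X^{p}$. We split into $r\le1$ and $r\ge1$.

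For $r\ge1$ we put everything in $LE^*$. On $A_j$ we have $|\partial^{\le1}\phi|\lesssim \varepsilon r^{1/2}e^{-\rho r}$. For example,
\begin{Eq*}
2^{j/2}\||\partial\phi|^{p}\|_{L^2(A_j)}\lesssim 2^{j/2}\|r^{1/2}e^{-\rho r}\varepsilon\|_{L^\infty(A_j)}^{p-1}\|\partial\phi\|_{L^2(A_j)}\lesssim \varepsilon^{p-1}2^{j}\,2^{(p-1)j/2}e^{-(p-1)\rho 2^{j-1}}\|\phi\|_{LE}.
\end{Eq*}
This is summable in $j$ for any $p>1$; this exponential gain is exactly why no threshold like $p_G$ appears. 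The time-differentiated terms $\partial_t F_p(u)=F_p'(u)\partial_tu$ are handled the same way, since $|F_p'(u)|\lesssim|u|^{p-1}$.

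For $r\le1$ the difficulty is the possible singularity at the origin, and here the upper bounds on $p_i$ enter. We use $L^2_t$ local energy on $\{r<1\}$ together with Sobolev embeddings in $n$ dimensions. For $F_{p_1}(\phi)$ we use $\phi,\partial\phi\in H^1$, and $p_1<\frac{n+2}{n-2}$ allows $\||\phi|^{p_1-1}\partial\phi\|_{L^2}$ to be bounded by an $H^2$-type norm with a positive power gain. For $F_{p_{2,3}}(\partial\phi)$ we have only $\partial\phi\in H^1\subset L^{2n/(n-2)}$, and $p<\frac{n}{n-2}$ is what makes $|\partial\phi|^{p-1}\partial^2\phi\in L^2$ via the radial Strauss-type decay $|\partial\phi|\lesssim r^{-(n-2)/2}\|\partial\phi\|_{H^1}$ combined with a Hardy inequality. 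We must check that the total is bounded by $\varepsilon^{p-1}$ times the $L^2_t$ local energy norm plus $L^\infty_t$ energy, so that it is integrable in time.

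Given these bounds, the map is a contraction in the weaker norm $\|\partial^{\le0}\cdot\|_{E\cap LE}$ (differences estimated via $|F_p(a)-F_p(b)|\lesssim(|a|+|b|)^{p-1}|a-b|$). A standard bootstrap then closes for $\varepsilon$ small, giving the global solution and the stated bound.
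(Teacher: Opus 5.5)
Your overall architecture matches the paper's: iterate with Theorem~\ref{Tm:T}, use the radial Hardy bound \eqref{Eq:hti} for the weighted $L^\infty$ norm, gain exponentially from $\sinh^{-\rho}r$ for $r\ge1$, and let $r\to0$ dictate the ranges of $p_i$. The step that controls $\partial_r\phi$, however, fails as stated. Recovering $(\partial_r+w)\partial_r\phi$ from the equation expresses it through $P\phi=F$ itself. Hence $\|\partial_r\phi\|_E$ needs $F\in L^\infty_tL^2$, and $\|\partial_r\phi\|_{LE}$ needs $\sup_j2^{-j/2}\|F\|_{L^2L^2(A_j)}$. Neither follows from a bound on $\|\partial_t^{\le1}F\|_{L^1L^2+LE^*}$, and for $j<0$ the weight $2^{-j/2}$ is stronger than the $LE^*$ weight $2^{j/2}$. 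Your norm gives for $\phi$ only $|\phi|\lesssim\varepsilon r^{-(n-2)/2}$ near $0$ and $\phi/r\in LE$. From these you get $2^{-j/2}\||\phi|^{p_1}\|_{L^2L^2(A_j)}\lesssim\varepsilon^{p_1-1}2^{j(1-\frac{(n-2)(p_1-1)}{2})}\|\phi\|_{LE}$, which stays bounded as $j\to-\infty$ only if $p_1\le\frac{n}{n-2}$. The range $\frac{n}{n-2}<p_1<\frac{n+2}{n-2}$ would therefore need extra Rellich-type input that you do not supply. Moreover, elliptic regularity near $r=0$ gives unweighted $H^2$ control, not the dyadic $r^{-1/2}$-weighted bound on $(\partial_r^2\phi,\partial_r\phi/r)$ that the LE norm of $\partial_r\phi$ requires.

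The missing idea is that no elliptic recovery is needed. The Killing fields $K_i$ commute with $\square_{\rH}$, and on radial functions $K_i\phi=\omega_i\partial_r\phi$, so $\sum_i|K_i\phi|^2=|\partial_r\phi|^2$ and $\partial_rK_i\phi=\omega_i\partial_r^2\phi$. The paper therefore applies Theorem~\ref{Tm:T} directly to $Z^{\le1}\phi$ with $Z=\{\partial_t,K_i,\Omega_{ij}\}$. Only $\|Z^{\le1}F\|_{LE^*}$ and $\|[Z,P]\phi\|_{LE^*}\lesssim\varepsilon'\|Z^{\le1}\phi\|_{LE}$ then enter. The origin is handled by the same weighted H\"older argument as infinity (the dual of \eqref{Eq:sf}). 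One puts $p-1$ factors in $L^\infty$ via $r^{-1/2}\sinh^\rho r\,|Z^{\le1}\phi|\lesssim\varE$ and the last factor in $LE$. The ranges of $p_i$ are then exactly the boundedness near $r=0$ of $r^{\frac{p_1+3}{2}}\sinh^{-\frac{(p_1-1)(n-1)}{2}}r$ and $r^{\frac{p+1}{2}}\sinh^{-\frac{(p-1)(n-1)}{2}}r$. Equivalently, $\sum_{j<0}2^{j(2-\frac{(n-2)(p_1-1)}{2})}$ and $\sum_{j<0}2^{j(1-\frac{(n-2)(p-1)}{2})}$ must be summable. Your Sobolev/Hardy sketch at the origin does not reach these ranges: $H^2\not\hookrightarrow L^\infty$ for $n\ge4$, and Hardy on $\partial^2\phi$ would cost a third derivative. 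It is also unnecessary. Your own annulus inequality already covers $r<1$ once it is normalized correctly as $r^{-1/2}\sinh^\rho r\,|u(r)|\lesssim\|(\tpartial u,u/r)\|_{L^2(A_j)}$, without the factor $2^{-j/2}$.
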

Here $\nabla^{\leq m}f:=\{\nabla^{\alpha}f\}_{|\alpha|\leq m}$,
and analogous notation will be used for other derivatives throughout the paper.

\begin{remark}
The assumptions in \Tm{Tm:P2} can be relaxed in several directions. 
For instance, one may allow certain singular behaviors of $P$ near the origin, 
or weaken some of the decay assumptions on the lower-order terms at spatial infinity. 
We impose the present hypotheses only to keep the argument transparent and to avoid additional technical complications.
\end{remark}

The rest of the paper is organized as follows.
In Section~\ref{Sn:p}, we collect some preliminaries on hyperbolic geometry, vector fields, and functional inequalities that will be used throughout the paper.
In Section~\ref{Sn:qlw}, we prove \Tm{Tm:P1} by means of an iteration argument.
In Section~\ref{Sn:energyLE}, we derive the basic energy and local energy estimates for the unperturbed shifted wave operator.
In \Sn{Sn:efpo}, we establish the corresponding estimates for perturbed wave operators and prove \Tm{Tm:T}.
In Section~\ref{Sn:SLWgwp}, we apply these estimates to prove \Tm{Tm:P2} for 
 semilinear problems with power-type nonlinearities in the radial setting.
Finally, the appendix explains how resolvent estimates on cAHM imply a Kato-type weighted smoothing estimate, which may be viewed as a spectral analogue of local energy estimates.

\section{Preliminaries}\label{Sn:p}
 We first record the coordinate formulas for this covariant derivative:
\begin{Eq*}
\nabla_\alpha f=\partial_\alpha f,\qquad \nabla_\alpha X^\beta=\partial_\alpha X^\beta+\Gamma^\beta{}_{\alpha \sigma}X^\sigma,\qquad \nabla_\alpha\omega_\beta=\partial_\alpha \omega_\beta-\Gamma^\sigma{}_{\alpha\beta}\omega_\sigma,
\end{Eq*}
for a function $f$, a vector field $X$, and a one-form $\omega$, respectively.
Here $\Gamma$ denotes the Christoffel symbols of the metric $g_0$ in the coordinate system $(t,r,\Theta)$, whose nonzero components are given by
\begin{Eq*}
\Gamma^{A}{}_{r D}=&\frac{ \sg^{AB}}{2 (\sinh r)^{2}}\partial_r\kl((\sinh r)^{2}\sg_{BD}\kr)=\frac{1}{\tanh r}\delta^A_D,\\
\Gamma^{r}{}_{BD}=&-\frac{1}{2}\partial_r\kl((\sinh r)^{2}\sg_{BD}\kr)=-\frac{1}{\tanh r}g_{BD},\\
\Gamma^{A}{}_{BD}=&\sGamma^{A}{}_{BD},
\end{Eq*}
where $\sGamma$ denotes the Christoffel symbols of $\sg$ associated with the angular coordinates $\Theta$.

In particular, for $X=X^r\partial_r$, we have
\begin{Eq*}
\nabla_\alpha X^\alpha=\partial_r(X^r)+\Gamma^{A}{}_{rA}X^r=\hpartial_rX^r,
\end{Eq*}
where $\hpartial_r=\partial_r+2w$ with $w:=\frac{n-1}{2\tanh r}$, as defined earlier.
Thus, $\hpartial_r(\cdots)$ may be viewed as a divergence-form expression.
Moreover, it is straightforward to verify that $w$ satisfies
\begin{Eq}\label{Eq:cow}
w^2+w'-\frac{(n-1)^2}{4}=\frac{(n-1)(n-3)}{4\sinh^2 r},\qquad 
w''+2ww'=-\frac{2}{\tanh r}\frac{(n-1)(n-3)}{4\sinh^2 r}.
\end{Eq}

Next, we introduce a family of Killing vector fields $\{K_i\}_{i=1}^n$.
The first one is
\begin{Eq*}
K_1:=\cos\theta^1\partial_r-\sin\theta^1\coth r\partial_{\theta^1},
\end{Eq*}
which may be compared with the translation vector field $\partial_{x^1}=\cos\theta^1\partial_r-\sin\theta^1r^{-1}\partial_{\theta^1}$ in $\rR^n$.
The remaining Killing vector fields, $K_i$ for $i=2,3,\cdots,n$, are obtained by rotation and correspond to $\partial_{x^i}$ in $\rR^n$.

For this family of vector fields $\{K_i\}_{i=1}^n$, we have two key formulas:
\begin{Eq*}
\sum_{i=1}^n|K_if|^2=&|\partial_r f|^2+\frac{1}{\tanh^2 r}\sg^{AB}\partial_Af\partial_Bf=|\partial_r f|^2+\cosh^2 r|\spartial f|^2,\\
\sum_{i=1}^nK_i^2=&\partial_r^2+\frac{n-1}{\tanh r}\partial_r +\frac{1}{\tanh^2 r}\sg^{AB}\snabla_A\snabla_B
=\Delta_{\rH}+\Delta_{\rS}
=\Delta_{\rH}+\sum_{1\leq i<j\leq n}\Omega_{ij}^2,
\end{Eq*}
where $\{\Omega_{ij}\}_{1\leq i<j\leq n}$ are the standard rotation vector fields,
which are also Killing vector fields on $\rH^n$.
Additionally, we have
\begin{Eq*}{}
[K_i,K_j]=\Omega_{ij},\quad [K_i,\Omega_{jk}]=\delta_{ij}K_k-\delta_{ik}K_j.
\end{Eq*}
For convenience, we adjoin the time translation separately and introduce the notation
\begin{Eq}\label{Eq:Doz}
  Z\equiv\{\partial_t\}\cup\{K_i\}_{i=1}^n\cup\{\Omega_{ij}\}_{1\leq i<j\leq n}.
\end{Eq}
Then each element of $Z$ commutes with $\square_{\rH}$.

Next, we introduce a dyadic envelope for the perturbation coefficients. Suppose that a nonnegative quantity $\Pi=\Pi(t,r,\Theta)$ satisfies
\begin{Eq*}
\sum_{j=-\infty}^\infty \sup_{\rR\times A_j}\Pi\leq \varepsilon'.
\end{Eq*}
Without loss of generality, we may choose a summable sequence $\{\varepsilon'_j\}_{j=-\infty}^\infty$ such that
\begin{Eq*}
\sup_{\rR\times A_j}\Pi\leq \varepsilon'_j,\qquad 
\sum_{j=-\infty}^\infty \varepsilon'_j\approx \varepsilon'.
\end{Eq*}
Correspondingly, we define the piecewise constant function
\begin{Eq*}
\varepsilon'(r):=\varepsilon'_j,\qquad r\in[2^{j-\frac12},2^{j+\frac12}).
\end{Eq*}
Then, whenever $(r,\Theta)\in A_j$, we may simply write
\begin{Eq*}
\Pi(t,r,\Theta)\lesssim \varepsilon'(r).
\end{Eq*}
In the sequel, $\Pi$ will stand for the weighted combinations of perturbation terms appearing in \Hs{Hs:hoP} or in analogous hypotheses.

Finally, we introduce some useful inequalities.
\begin{proposition}\label{Pn:ki}
Let $h\in C^1([0,T])$ be a positive and increasing function.
Then, for any $f\in C^1([0, T])$ with $f(0)=0$, we have the inequality
\begin{Eq}\label{Eq:ki1}
\frac{|f(T)|^2}{h(T)}+\int_{0}^{T}|f|^2\frac{h'}{h^2}\d t\le 4\int_{0}^{T}\frac{|f'|^2}{h'}\d t.
\end{Eq}
On the other hand, let $T> 0$ and $h\in C^1([T,\infty))$ be a positive and increasing function.
Then, for any $f\in C^1([T, \infty))$ with $\lim_{t\to \infty}f=0$, we have the inequality
\begin{Eq}\label{Eq:ki2}
|f(T)|^2h(T)+\int_{T}^{\infty} |f|^2h'\d t\le 4 \int_{T}^{\infty}|f'|^2\frac{h^{2}}{h'}\d t.
\end{Eq}
\end{proposition}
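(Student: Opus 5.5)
We prove both inequalities of \Cref{Pn:ki} as weighted Hardy inequalities: differentiate a suitable quotient or product, integrate, and absorb the cross term by Cauchy--Schwarz.

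\textbf{Inequality \eqref{Eq:ki1}.} Assume first that $h'>0$ on $[0,T]$. The general case follows by replacing $h$ with $h+\delta t$ and letting $\delta\to0$: the right-hand side is then read as $+\infty$ wherever $h'=0$ and $f'\neq0$, and the left-hand side passes to the limit by monotone or dominated convergence.

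Differentiate $|f|^2/h$:
\begin{Eq*}
\frac{\d}{\d t}\frac{|f|^2}{h}=\frac{2\,\mathrm{Re}(\bar f f')}{h}-\frac{|f|^2h'}{h^2}.
\end{Eq*}
Integrating over $[0,T]$ and using $f(0)=0$ gives
\begin{Eq*}
\frac{|f(T)|^2}{h(T)}+\int_0^T\frac{|f|^2h'}{h^2}\d t=2\int_0^T\frac{\mathrm{Re}(\bar f f')}{h}\d t.
\end{Eq*}
Write $I=\int_0^T|f|^2h'/h^2\,\d t$, $J=\int_0^T|f'|^2/h'\,\d t$, and $L$ for the left-hand side. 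Splitting $\frac{1}{h}=\frac{\sqrt{h'}}{h}\cdot\frac{1}{\sqrt{h'}}$ and applying Cauchy--Schwarz, the right-hand side is at most $2\sqrt{IJ}$. Since $I\le L$, this gives $L\le 2\sqrt{LJ}$, hence $L\le 4J$.

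There is one subtlety: $I$ could be infinite near $t=0$ if $h(0)$ were $0$. It is not, because $h$ is positive and continuous on the compact interval, so $h\ge h(0)>0$ and all quantities are finite.

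\textbf{Inequality \eqref{Eq:ki2}.} Proceed in the same way with $|f|^2h$:
\begin{Eq*}
\frac{\d}{\d t}\bigl(|f|^2h\bigr)=2\,\mathrm{Re}(\bar f f')h+|f|^2h'.
\end{Eq*}
Integrate from $T$ to $R$ and let $R\to\infty$. We then need the boundary term $|f(R)|^2h(R)$ to vanish, or at least to have a good sign. Rewriting the identity as
\begin{Eq*}
|f(T)|^2h(T)+\int_T^R|f|^2h'\,\d t=-2\int_T^R\mathrm{Re}(\bar f f')h\,\d t+|f(R)|^2h(R),
\end{Eq*}
we see the boundary term sits on the wrong side. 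This is the main obstacle, and it needs more than $f\to0$.

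To handle it, assume the right-hand side $J=\int_T^\infty|f'|^2h^2/h'\,\d t$ is finite; otherwise there is nothing to prove. By Cauchy--Schwarz and $f(\infty)=0$,
\begin{Eq*}
|f(R)|^2\le\int_R^\infty\frac{|f'|^2h^2}{h'}\d t\cdot\int_R^\infty\frac{h'}{h^2}\d t\le\frac{1}{h(R)}\int_R^\infty\frac{|f'|^2h^2}{h'}\d t.
\end{Eq*}
Therefore $|f(R)|^2h(R)\le\int_R^\infty|f'|^2h^2/h'\,\d t\to0$.

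This bound at $R=T$ already gives $|f(T)|^2h(T)\le J$. For the full estimate, let $R\to\infty$ in the identity, using that the boundary term vanishes. Write $I=\int_T^\infty|f|^2h'\,\d t$ and $L$ for the left-hand side. Cauchy--Schwarz with the splitting $h=\sqrt{h'}\cdot\frac{h}{\sqrt{h'}}$ bounds the cross term by $2\sqrt{IJ}\le2\sqrt{LJ}$, so again $L\le4J$.

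Finiteness of $I$ should be checked before absorbing. Apply the identity on $[T,R]$, where all quantities are finite. Then $L_R\le 2\sqrt{L_RJ}+o(1)$, which bounds $L_R$ uniformly in $R$, and we pass to the limit.
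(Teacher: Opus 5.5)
Under the literal hypothesis $h(0)>0$ your proof of \eqref{Eq:ki1} is complete. However, the step ``$h\ge h(0)>0$, so all quantities are finite'' discards the case the paper treats as the real content of \eqref{Eq:ki1}. Right after the statement the paper stresses that $h(0)\neq0$ is \emph{not} required. The estimate \eqref{Eq:hti} comes from \eqref{Eq:ki1} with $h(r)=r$, so $h(0)=0$ there. The paper's derivation of \eqref{Eq:ki2} by $t\mapsto 1/t$, $\tilde h(t)=1/h(1/t)$ also produces $\tilde h(0)=\lim_{t\to\infty}1/h(t)$, which vanishes whenever $h\to\infty$.

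When $h(0)=0$, your integrated identity contains the undefined boundary term $|f(0)|^2/h(0)$, and $I$ may diverge. For example, with $h=t^2$ and $f=t$ one has $|f|^2/h\equiv1$ and $I=\infty$; this is consistent only because $J=\infty$ as well. What is missing is the paper's claim that $|f(t_1)|^2/h(t_1)\to0$ as $t_1\to0$ whenever $J<\infty$.

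The fix is the Cauchy--Schwarz trick you already use at infinity:
\[
|f(t_1)|^2\le\Bigl(\int_0^{t_1}\frac{|f'|^2}{h'}\,\d t\Bigr)\bigl(h(t_1)-h(0)\bigr)\le h(t_1)\int_0^{t_1}\frac{|f'|^2}{h'}\,\d t .
\]
Hence $B_{t_1}:=|f(t_1)|^2/h(t_1)\le\int_0^{t_1}|f'|^2/h'\,\d t\to0$. Now run your identity on $[t_1,T]$, where everything is finite, with $L_{t_1}:=|f(T)|^2/h(T)+\int_{t_1}^T|f|^2h'/h^2\,\d t$. This gives $L_{t_1}\le B_{t_1}+2\sqrt{L_{t_1}J}$, so $\sqrt{L_{t_1}}\le\sqrt J+\sqrt{J+B_{t_1}}$. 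Letting $t_1\to0$ yields $L\le 4J$. This is slightly quicker than the paper's proof of the same claim, which integrates by parts against $h'$ and then applies Young's inequality, ending with constant $9$.

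Apart from this, your route differs from the paper's only for \eqref{Eq:ki2}. The paper deduces it from \eqref{Eq:ki1} by inversion, which is exactly why it needs the degenerate case. You prove it directly, and that argument is correct and self-contained. Your bound $|f(R)|^2h(R)\le\int_R^\infty|f'|^2h^2/h'\,\d t$ is precisely the image of the missing claim at $0$ under $t\mapsto1/t$.
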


Here, we note that it is not necessary to assume $h(0)\neq 0$ in \eqref{Eq:ki1}.
The proof is elementary and follows a similar approach to that of Hardy's inequality.
For the reader's convenience, we provide the proof below.

\begin{proof}
Firstly, we observe that \eqref{Eq:ki2} can be derived from \eqref{Eq:ki1}
by the change of variables $\tilde T=1/T$, $\tilde f(t)=f(1/t)$, and
$\tilde h(t)=\frac{1}{h(1/t)}$.
Thus, it suffices to prove \eqref{Eq:ki1}.

To begin, for any $t_1\in (0, T]$, we have
\begin{Eq*}
\int_{t_1}^{T}|f|^2\frac{h'}{h^2}\d t=- \left.\frac{|f|^2}{h}\right|_{t_1}^T
+\int_{t_1}^T (|f|^2)'\frac{1}{h}\d t
\le - \left.\frac{|f|^2}{h}\right|_{t_1}^T
+2\int_{t_1}^T |f||f'|\frac{1}{h}\d t,
\end{Eq*}
which implies that for any $\varepsilon'>0$,
\begin{Eq*}
 \frac{|f(T)|^2}{h(T)}+\int_{t_1}^{T}|f|^2\frac{h'}{h^2}\d t\le\frac{|f(t_1)|^2}{h(t_1)}
+\varepsilon'\int_{t_1}^T |f|^2\frac{h'}{h^2}\d t
+\frac{1}{\varepsilon'}\int_{t_1}^T \frac{|f'|^2}{h'}\d t.
\end{Eq*}
By choosing $\varepsilon'=1/2$,
the proof of
\eqref{Eq:ki1} is reduced to verifying the claim:
$\lim_{t_1\to 0}|f(t_1)|^2/h(t_1)=0$, which is nontrivial in the case when $h(0)=0$.

Recalling $f(0)=0$ and $h$ is increasing and positive, 
we obtain
\begin{Eq*}
\int_{0}^{t_1} |f|^2h'\d t
\leq |f(t_1)|^2h(t_1)+2\int_{0}^{t_1} |f||f'|h\d t
\leq |f(t_1)|^2h(t_1)+2h(t_1)\int_{0}^{t_1} |f||f'|\d t.
\end{Eq*}
Observing that
\begin{Eq*}
|f(t_1)|^2=|f(0)|^2+2\int_0^{t_1}ff'\d t
\leq 2\int_0^{t_1} |f||f'|\d t,
\end{Eq*}
we deduce from the previous inequality that
\begin{Eq*}
\int_{0}^{t_1} |f|^2h'\d t+|f(t_1)|^2h(t_1)\le 6h(t_1)\int_0^{t_1} |f||f'|\d t.
\end{Eq*}
By applying Young's inequality, we obtain
\begin{Eq*}
\int_{0}^{t_1} |f|^2h'\d t+|f(t_1)|^2h(t_1)
\leq 
\frac{9}{\varepsilon'}h^2(t_1)\int_0^{t_1}\frac{|f'|^2}{h'}\d t+\varepsilon'\int_0^{t_1} |f|^2h'\d t,
\end{Eq*}
for any $\varepsilon'>0$. Taking $\varepsilon'=1$, we conclude that
\begin{Eq*}
\frac{|f(t_1)|^2}{h(t_1)}
\leq 
9\int_0^{t_1}\frac{|f'|^2}{h'}\d t.
\end{Eq*}
As $t_1$ approaches $0$, we find $\lim_{t_1\to 0^+}|f(t_1)|^2/h(t_1)=0$, 
which completes the proof.
\end{proof}
\begin{remark}
This proposition plays a crucial role in the subsequent proofs.
Its most immediate application is the derivation of pointwise estimates from the energy of the solution.
In particular, if we take
$f=\sinh^{(n-1)/2}r\phi(r,\Theta)$ in \eqref{Eq:ki1}, we obtain
\begin{Eq}\label{Eq:hti}
\|r^{-1}\phi(r,\Theta)\|_{L^2(\rH^n)}^2+\sup_{R>0}
\frac{\sinh^{n-1} R }{R}\|\phi(R,\Theta)\|_{L^2(\rS^{n-1})}^2
\leq
4
\|\partial_r\phi+w\phi\|_{L^2(\rH^n)}^2\le 4\|\tpartial\phi\|_{L^2(\rH^n)}^2.
\end{Eq}
\end{remark}

\section{Global existence for quasilinear wave equations} 
\label{Sn:qlw}
\subsection{Iteration scheme and bootstrap quantities}
In this section, we prove \Tm{Tm:P1} based on \Tm{Tm:T}.
The proof proceeds as follows.
We set $\phi_{0}=0$ and consider the following iteration scheme:
\begin{Eq*}
\begin{cases}
\kl(\square_{\rH}-F_{1,k}\partial_t^2\kr)\phi_{k+1}=F_{2,k},\qquad 
F_{\iota,k}:=F_\iota(\partial_t^{\leq 1}\phi_k),\quad (\iota=1,2),\\
(\phi_{k+1},\partial_t\phi_{k+1})_{t=0}=\varepsilon\cdot(\psi_0,\psi_1),
\end{cases}
\end{Eq*}
where $F_1, F_2$ are smooth, $F_1(0,0)=F_2(0,0)=F_2'(0,0)=0$, and $\varepsilon>0$ is sufficiently small.
For the vector fields $Z$ defined by \eqref{Eq:Doz}, we define
\begin{Eq*}
\varE_{m,k}:=\|\phi_k\|_{\mathcal{E}_m},\qquad 
\varF_k:=\|\phi_{k+1}-\phi_k\|_{\mathcal{E}_0}.
\end{Eq*}
where $\|f\|_{\mathcal{E}_m}:=\|Z^{\leq m}f\|_{E\cap LE(S_0^\infty)}$.

The proof proceeds in three stages.
First, we show that the iterates are uniformly bounded in the higher-order norm $\mathcal{E}_{n+4}$, namely,
$\varE_{n+4,k}\lesssim \varepsilon$ uniformly in $k$.
Next, we prove that the iteration is contractive with respect to the base norm $\mathcal{E}_0$ by establishing
$\varF_{k+1}\le \frac12 \varF_k$.
Consequently, $\{\phi_k\}$ converges to a global finite-energy solution $\phi$ of \eqref{Eq:P1}.
Finally, we establish persistence of regularity for the limit $\phi$ by deriving higher-order estimates and applying Gronwall's inequality.

Before proceeding, we claim that
\begin{Eq}\label{Eq:coi3}
\|\tpartial^{\leq 1}Z^{\leq m}\phi_k(0)\|_{L^2}\lesssim\varepsilon,
\end{Eq}
for any $m\geq 0$, where the implicit constant is independent of $k$. We postpone the proof of this claim to \Sn{Sn:pocoi3}.
Moreover, by standard local well-posedness and finite propagation speed for strictly hyperbolic equations, each iterate $\phi_k$ is smooth and compactly supported on every time slice, since the initial data are compactly supported and the coefficients remain uniformly close to those of the background metric. Therefore, the compact-support assumption in \Tm{Tm:T} is satisfied at every step of the iteration, and the theorem may be applied to $\phi_{k+1}$ and its commuted equations without further comment.

\subsection{Uniform boundedness
}\label{Sn:mii}
First, we note that $\varE_{n+4,0}=0$. 
Next, for some fixed $k\geq 0$, we assume that 
$\varE_{n+4,k}\lesssim\varepsilon$.
We shall show that the same bound also holds for $\varE_{n+4,k+1}$.

Here, by \eqref{Eq:hti} and the Sobolev inequality on the sphere, we obtain
\begin{Eq}\label{Eq:coLi}
\kl\|r^{-\frac{1}{2}}\sinh^{\frac{n-1}{2}}rZ^{\leq n+4-\lfloor \frac{n+1}{2}\rfloor}\phi_k\kr\|_{L^\infty L^\infty}
\lesssim&\sup_{t,r}\kl\|r^{-\frac{1}{2}}\sinh^{\frac{n-1}{2}}rZ^{\leq n+4}\phi_k\kr\|_{L^2(\rS^{n-1})}\\
\lesssim&\kl\|\tpartial Z^{\leq n+4}\phi_k\kr\|_{L^\infty L^2}
\lesssim \varE_{n+4,k}
\lesssim\varepsilon.
\end{Eq}
Moreover, since the vector fields $Z$ are essentially adapted to spatial translations, 
and since the norm $\|\tpartial f\|_{L^\infty L^2}$ is also essentially invariant under spatial translations (see \Sn{Sn:eefuo}), 
we may remove the apparent singularity at $r=0$.

Since $F_1$ is smooth with $F_1(0,0)=0$, we easily see that
$|F_1(\vec{u})|\lesssim |\vec{u}|$ and $|\nabla F_1(\vec{u})|\lesssim 1$ as long as $|\vec{u}|\lesssim 1$.
On the other hand, by the previous estimate, we have
\begin{Eq*}
|Z^{\leq 2}\phi_k|
\lesssim \kl<r\kr>^{\frac12}\cosh^{-\frac{n-1}{2}}r\,\varE_{n+4,k}
\lesssim \varepsilon\cdot \cosh^{\delta_0-\frac{n-1}{2}}r.
\end{Eq*}
Therefore, for any $0<\delta_0\ll 1$, noting that $|\partial_r f|\lesssim |Kf|$, we have
\begin{Eq*}
|\partial_{t,r}^{\leq 1}F_{1,k}|
\lesssim |Z^{\leq 1}F_{1,k}|
\lesssim |\partial_t^{\leq 1}\phi_k|+|Z\partial_t^{\leq 1}\phi_k|
\lesssim \varepsilon\cdot \cosh^{\delta_0-\frac{n-1}{2}}r.
\end{Eq*}

Hence, if $\varepsilon$ is sufficiently small, then $\gamma^{tt}=-F_{1,k}$ and $G^t=\partial_tF_{1,k}$ satisfy the assumptions of \Hs{Hs:hoP}, 
with $\varepsilon'$ uniformly in $k$. Since all the other perturbation components vanish, we may apply \Tm{Tm:T}.

In addition, for any $0 < \delta_0, \delta_1 \ll 1$ and any sufficiently regular function $f$, 
we have
\begin{Eq}\label{Eq:sf}
\|r^{-\frac{1}{2}+\delta_0}\kl<r\kr>^{-\delta_0-\delta_1}f\|_{L^2(\rH^n)}\lesssim\sup_j 2^{-j/2}\|f\|_{L^2(A_j)},
\end{Eq}
where the implicit constant depends only on $\delta_0$ and $\delta_1$. 
The proof is straightforward and is left to the reader. 
Consequently, we also obtain
\begin{Eq*}
\kl\|r^{-\frac{1}{2}+\delta_0}\kl<r\kr>^{-\delta_0-\delta_1}(\tpartial,r^{-1})Z^{\leq n+4}\phi_k\kr\|_{L^2L^2}
\lesssim \kl\|Z^{\leq n+4}\phi_k\kr\|_{LE}
\lesssim \varE_{n+4,k}
\lesssim \varepsilon.
\end{Eq*}

At this point, we are ready to begin the iteration estimates.
For $Z^{\leq m}\phi_{k+1}$, we have
\begin{Eq*}
(\square_{\rH}-F_{1,k}\partial_t^2)\kl(Z^{\leq m}\phi_{k+1}\kr)
=[Z^{\leq m},F_{1,k}]\partial_t^2\phi_{k+1}+Z^{\leq m}F_{2,k}.
\end{Eq*}
Therefore, using \eqref{Eq:Pgei}, 
the initial-data estimate \eqref{Eq:coi3}, 
and the dual version of \eqref{Eq:sf}, we obtain
\begin{Eq*}
\varE_{n+4,k+1}\lesssim&\|\tpartial Z^{\leq n+4}\phi_{k+1}(0)\|_{L^2}
+\kl\|Z^{\leq n+4}F_{2,k}\kr\|_{LE^*}
+\|[Z^{\leq n+4},F_{1,k}]\partial_t^2\phi_{k+1}\|_{LE^*}\\
\lesssim&\varepsilon+I_{11}+I_{12}.
\end{Eq*}
where
\begin{Eq*}
I_{11}:=&\kl\|r^{\frac{1}{2}-\delta_0}\kl<r\kr>^{\delta_0+\delta_1}Z^{\leq n+4}F_{2,k}\kr\|_{L^2L^2},\\
I_{12}:=&
\kl\|r^{\frac{1}{2}-\delta_0}\kl<r\kr>^{\delta_0+\delta_1}[Z^{\leq n+4},F_{1,k}]\partial_t^2\phi_{k+1}\kr\|_{L^2L^2}.
\end{Eq*}

Using smoothness of $F_2$ and the vanishing conditions $F_2(0,0)=F_2'(0,0)=0$, 
$Z^{\leq m}F_{2,k}$ can be written as a linear combination of terms of the form
\begin{Eq*}
a(\partial_t^{\leq 1}\phi_k)\cdot
\kl(\partial_t^{\leq 1}Z^{\leq \lfloor \frac{m}{2}\rfloor}\phi_k\kr)^{i+1}
\cdot (\partial_t^{\leq 1}Z^{\leq m}\phi_k),
\end{Eq*}
for some $0\leq i\leq m-2$, where $a$ is essentially a smooth function arising from derivatives of $F_2$, thus $|a(\partial_t^{\leq 1}\phi_k)|\lesssim 1$ as long as $\varE_{n+4,k}\lesssim \varepsilon$.
Since $\partial_t\in{Z}$, under the above estimates, we obtain
\begin{Eq*}
I_{11}\lesssim&
\kl\|r^{-\frac{1}{2}+\delta_0}\kl<r\kr>^{-\delta_0-\delta_1-1}\cosh^{1-\delta_0} r\, Z^{\leq n+4}F_{2,k}\kr\|_{L^2L^2}
\kl\|r^{1-2\delta_0}\kl<r\kr>^{2\delta_0+2\delta_1+1}\cosh^{\delta_0-1}r\kr\|_{L_r^\infty}\\
\lesssim&
\sum_{i=0}^{n+2}
\kl\|r^{-\frac{1}{2}+\delta_0}\kl<r\kr>^{-\delta_0-\delta_1-1}\cosh^{1-\delta_0} r
\kl(Z^{\leq \lfloor \frac{n+4}{2}\rfloor+1}\phi_k\kr)^{i+1}\cdot\partial_t^{\leq 1} Z^{\leq n+4}\phi_k\kr\|_{L^2L^2}\\
\lesssim&
\sum_{i=0}^{n+2}
\kl\|\cosh^{\frac{1-\delta_0}{i+1}}rZ^{\leq \lfloor \frac{n+4}{2}\rfloor+1}\phi_k\kr\|_{L^\infty L^\infty}^{i+1}
\|r^{-\frac{1}{2}+\delta_0}\kl<r\kr>^{-\delta_0-\delta_1-1}\tpartial^{\leq 1}Z^{\leq n+4}\phi_k\|_{L^2L^2}\\
\lesssim&
\sum_{i=0}^{n+2}
\kl\|\cosh^{\frac{n-1}{2}-\delta_0}r Z^{\leq \lfloor \frac{n+4}{2}\rfloor+1}\phi_k\kr\|_{L^\infty L^\infty}^{i+1}
\|r^{-\frac{1}{2}+\delta_0}\kl<r\kr>^{-\delta_0-\delta_1}(\tpartial,r^{-1})Z^{\leq n+4}\phi_k\|_{L^2L^2}\\
\lesssim&(1+\varE_{n+4,k})^{n+2}(\varE_{n+4,k})^2
\lesssim \varepsilon \varE_{n+4,k}.
\end{Eq*}
Here, in the second-to-last inequality, we used \eqref{Eq:coLi}, together with
$\lfloor \frac{n+4}{2}\rfloor+1\leq n+4-\lfloor \frac{n+1}{2}\rfloor$.

Next, we turn to $I_{12}$. Using smoothness of $F_1$ and the vanishing condition $F_1(0,0)=0$, 
$[Z^{\leq m},F_{1,k}]\partial_t^2\phi_{k+1}$ 
can be written as a linear combination of terms of the form
\begin{Eq*}
a(\partial_t^{\leq 1}\phi_k)\cdot
\kl( \partial_t^{\leq 1} Z^{\leq \lfloor \frac{m+1}{2}\rfloor}\phi_k\kr)^{i+1}
\cdot
\partial_t^2 Z^{\leq m-1}\phi_{k+1},
\end{Eq*}
or
\begin{Eq*}
a(\partial_t^{\leq 1}\phi_k)\cdot
\kl( \partial_t^{\leq 1} Z^{\leq \lfloor \frac{m}{2}\rfloor}\phi_k\kr)^{i}
\cdot
\partial_t^{\leq 1} Z^{\leq m}\phi_k
\cdot\partial_t^2 Z^{\leq \lfloor \frac{m}{2}\rfloor-1}\phi_{k+1},
\end{Eq*}
for some $0\leq i\leq m-2$. 

Thus, arguing as in the estimate for $I_{11}$, we obtain
\begin{Eq*}
I_{12}\lesssim& \sum_{i=0}^{n+2}
\kl\|\cosh^{\frac{n-1}{2}-\delta_0}r Z^{\leq \lfloor \frac{n+5}{2}\rfloor+1}\phi_k\kr\|_{L^\infty L^\infty}^{i+1}
\|r^{-\frac{1}{2}+\delta_0}\kl<r\kr>^{-\delta_0-\delta_1}(\tpartial,r^{-1})Z^{\leq n+4}\phi_{k+1}\|_{L^2L^2}\\
&\ \ +\kl\|\cosh^{\frac{n-1}{2}-\delta_0}r Z^{\leq\lfloor \frac{n+4}{2}\rfloor+1}\phi_k\kr\|_{L^\infty L^\infty}^{i}
\|r^{-\frac{1}{2}+\delta_0}\kl<r\kr>^{-\delta_0-\delta_1}(\tpartial,r^{-1})Z^{\leq n+4}\phi_{k}\|_{L^2L^2}\\
&\ \ \ \ \ \times \kl\|\cosh^{\frac{n-1}{2}-\delta_0}r Z^{\leq \lfloor \frac{n+4}{2}\rfloor+1}\phi_{k+1}\kr\|_{L^\infty L^\infty}\\
\lesssim&(1+\varE_{n+4,k})^{n+2}\cdot\varE_{n+4,k}\cdot \varE_{n+4,k+1}\lesssim \varepsilon \varE_{n+4,k+1}
\end{Eq*}
since $\lfloor \frac{n+5}{2}\rfloor+1\leq n+4-\lfloor \frac{n+1}{2}\rfloor$.
Summarizing the above discussion, we conclude that, for some constant $C>0$,
\begin{Eq*}
\varE_{n+4,k+1}\leq C(\varepsilon+\varepsilon\varE_{n+4,k}+\varepsilon\varE_{n+4,k+1}).
\end{Eq*}
It then follows that $\varE_{n+4,k+1}\leq 2C\varepsilon$, 
provided that $\varE_{n+4,k}\leq 2C\varepsilon$ and $\varepsilon\leq (4C)^{-1}$.
This completes the proof of the uniform boundedness of the iteration.

\subsection{Contraction of the iteration and regularity of the limit}\label{Sn:car}
Next, turning to $\varF_k$, we compute that
\begin{Eq*}
\begin{cases}
\kl(\square_{\rH}-F_{1,k}\partial_t^2\kr)\kl(\phi_{k+1}-\phi_k\kr)=F_{2,k}-F_{2,k-1}+(F_{1,k}-F_{1,k-1})\partial_t^2\phi_k,\\
\kl(\phi_{k+1}-\phi_k,\partial_t\phi_{k+1}-\partial_t\phi_k\kr)_{t=0}=(0,0).
\end{cases}
\end{Eq*}
As before, by the mean value theorem, the right-hand side can be written as a linear combination of terms of the form
\begin{Eq*}
a(\partial_t^{\leq 1}\phi_k,\partial_t^{\leq 1}\phi_{k-1})\cdot \partial_t^{\leq 2}\kl(\phi_k,\phi_{k-1}\kr)\cdot \partial_t^{\leq 1}(\phi_k-\phi_{k-1}),
\end{Eq*}
where $a$ is smooth and uniformly bounded, thanks to the pointwise estimates obtained in the previous subsection.
Therefore, arguing similarly as in the estimates of $I_{11}$ and $I_{12}$, with one factor replaced by
$\partial_t^{\le 1}(\phi_k-\phi_{k-1})$,
we obtain $\varF_{k}\lesssim(\varE_{n+4,k}+\varE_{n+4,k-1})\varF_{k-1}$.
Since $\varE_{n+4,k}\lesssim \varepsilon$ uniformly in $k$, we have $\varF_{k}\leq C'\varepsilon\varF_{k-1}$.
Therefore, provided that $\varepsilon\leq (2C')^{-1}$ and that the conditions in the previous subsection are satisfied, the iteration is contractive.
Consequently, $\{\phi_k\}$ converges strongly in $\mathcal{E}_0$ and weakly in $\mathcal{E}_{n+4}$ to a global solution $\phi$ of \eqref{Eq:P1} with $\|\phi\|_{\mathcal{E}_{n+4}}\lesssim\varepsilon$; see, e.g., \cite[Lemma 4.4]{MR4662297} for a more detailed discussion of a similar argument.

Having established convergence of the iteration, 
we now turn to the regularity of the limit $\phi$.
For any fixed $T > 0$, we consider $t \in [0, T)$ and begin by computing
\begin{Eq*}
\varE_{m}(t):=&\|\tpartial^{\leq 1}Z^{\leq m}\phi\|_{L^\infty L^2(S_0^t)}\\
\lesssim&\|\tpartial Z^{\leq m}\phi\|_{L^\infty L^2(S_0^t)}+\|Z^{\leq m}\phi(0)\|_{L^2(\rH^n)}+T\|\partial_tZ^{\leq m}\phi\|_{L^\infty L^2(S_0^t)}\\
\lesssim&\|Z^{\leq m}\phi\|_{E(S_0^t)}+\|\tpartial^{\leq 1}Z^{\leq m}\phi(0)\|_{L^2(\rH^n)}.
\end{Eq*}
First, by the convergence of $\phi_k$ to $\phi$ and the uniform bound $\varE_{n+4,k}\lesssim\varepsilon$, we have
$\sup_{t\in[0,T)}\varE_{n+4}(t)\leq\varlimsup_{k\to\infty}\varE_{n+4,k}<\infty$.
Then, assuming that $\sup_{t\in[0,T)}\varE_{m-1}(t)<\infty$ for some fixed $m\geq n+5$, 
and arguing as in the previous subsection, we obtain
\begin{Eq*}
\varE_{m}(t)\lesssim&\varepsilon +\|Z^{\leq m}F_2(\phi,\partial_t\phi)\|_{L^1L^2(S_0^t)}+\|[Z^{\leq m},F_1(\phi,\partial_t\phi)\partial_t^2]\phi\|_{L^1L^2(S_0^t)}\\
\lesssim &1+\sum_{i=0}^{m-2}\kl\|\kl(Z^{\leq \lfloor \frac{m+1}{2}\rfloor+1}\phi\kr)^{i+1}\cdot \partial_t^{\leq 1}Z^{\leq m}\phi\kr\|_{L^1L^2(S_0^t)}\\
\lesssim &1+\kl(1+\kl\|Z^{\leq \lfloor \frac{m+1}{2}\rfloor+1}\phi\kr\|_{L^\infty L^\infty(S_0^t)}\kr)^{m-1}\kl\|\tpartial^{\leq 1}Z^{\leq m}\phi\kr\|_{L^1 L^2(S_0^t)}\\
\lesssim &1+\big(1+\varE_{m-1}(t)\big)^{m-1}\int_0^t\kl\|\tpartial^{\leq 1}Z^{\leq m}\phi\kr\|_{L^2(\rH^n)}\d\tau\\
\lesssim &1+\int_0^t \varE_{m}(\tau)\d\tau
\end{Eq*}
since $\lfloor \frac{m+1}{2}\rfloor+1\leq m-1-\lfloor \frac{n+1}{2}\rfloor$ for $m\geq n+5$.

Combining these results, we obtain
\begin{Eq*}
\varE_{m}(t)\lesssim 1+\int_0^t\varE_{m}(\tau)\d\tau.
\end{Eq*}
By Gronwall's inequality, it follows that $\sup_{t\in(0,T]}\varE_{m}(t)<\infty$.
By induction on $m$, we conclude that this bound holds for all $m$, 
which implies that $\phi \in C^\infty(S_0^T)$.
Since $T>0$ is arbitrary, this completes the proof of \Tm{Tm:P1}.

\subsection{Proof of Claim~(\ref{Eq:coi3})}\label{Sn:pocoi3}
In this subsection, we prove Claim \eqref{Eq:coi3}.
Observe that the full time jet of $\phi_k$ at $t=0$ is determined recursively by the common initial data $(\psi_0,\psi_1)$ and the nonlinear structure of the equation, and is therefore independent of $k$. For this reason, we omit the subscript $k$ throughout this subsection.

To begin with, we show that the initial value of $\partial_t^m \phi$ takes the form
\begin{Eq*}
\kl.\partial_t^m\phi\kr|_{t=0}=f_m(\varepsilon;r,\Theta),
\end{Eq*}
where $f_m(\varepsilon;r,\Theta)\in C^\infty(\rR;C_0^\infty(\rH^n))$ and satisfies $f_m(0;\cdot,\cdot)=0$ for every $m$.

This is clear for $m=0,1$, since $f_0=\varepsilon\psi_0$ and $f_1=\varepsilon\psi_1$.
Now assume that the statement holds for all $m\le M-1$ for some fixed $M\ge2$.
From \eqref{Eq:P1}, we have
\begin{Eq*}
\partial_t^{M}\phi
=\partial_t^{M-2}\Big((1+F_1(\phi,\partial_t\phi))^{-1}\big(\Delta_{\rH}\phi+\rho^2\phi-F_2(\phi,\partial_t\phi)\big)\Big).
\end{Eq*}
Observe that the right-hand side involves only smooth algebraic combinations of
$\partial_t^i\phi$ for $i\leq M-1$, together with spatial derivatives acting on those quantities.
Therefore, after restricting to $t=0$, we may express $f_M$ in terms of smooth spatial derivatives of $f_m$ with $m<M$.
It follows that $f_M(\varepsilon;r,\Theta)\in C^\infty(\rR;C_0^\infty(\rH^n))$, and moreover $f_M(0;\cdot,\cdot)=0$.

Now, let $\hat Z = \{K_i\}_{i=1}^n \cup \{\Omega_{ij}\}_{1\leq i<j\leq n}
= Z \backslash \{\partial_t\}$.
Using the discussions in \Sn{Sn:p}, together with \eqref{Eq:eoe}, we obtain
\begin{Eq*}
\|\tpartial^{\leq 1}Z^{\leq m}\phi(0)\|_{L^2}^2\approx& \|\partial_t^{\leq 1}Z^{\leq m}\phi(0)\|_{L^2}^2+\kl(\kl(-\Delta_{\rH}-\rho^2\kr)Z^{\leq m}\phi(0),Z^{\leq m}\phi(0)\kr)_{\rH^n}\\
\lesssim& \sum_{i=0}^{m+1}\|\hat Z^{\leq m+2}\partial_t^{i}\phi(0)\|_{L^2}^2\\
=&\sum_{i=0}^{m+1}\|\hat Z^{\leq m+2}f_i(\varepsilon;r,\Theta)\|_{L^2(\rH^n)}^2\leq C_m\varepsilon^2.
\end{Eq*}
This concludes the proof of \eqref{Eq:coi3}.

\section{Energy and local energy estimates for the unperturbed wave operator}\label{Sn:energyLE}
In this section, we present the proof of \Tm{Tm:T} in the unperturbed setting.
Let $T_0$ denote the energy-momentum tensor associated with  $f$, defined by
\begin{Eq*}
T_{0}^{\alpha\beta}:=\nabla^\alpha f\nabla^\beta f-\frac{1}{2}g^{\alpha\beta}\kl(\nabla_\sigma f\nabla^\sigma f-\frac{(n-1)^2}{4}f^2\kr).
\end{Eq*}
Then, the divergence of $T_0$ satisfies $\nabla_\alpha T_{0}^{\alpha\beta}=-\square_{\rH}f\cdot\nabla^\beta f$. 

For any given triplet $(X,\chi,Y)$ where  $X$ and $Y$ are vector fields and $\chi$ is a function, 
we define the current
\begin{Eq*}
J_0^\alpha:=-T_{0}^{\alpha\beta}X_\beta+\frac{1}{2}f^2\nabla^\alpha\chi-\chi f\nabla^\alpha f+Y^\alpha.
\end{Eq*}
Next, we compute its divergence as follows:
\begin{Eq}\label{Eq:DoJ0}
\nabla_\alpha J_0^\alpha=&\square_{\rH}f Xf-T_{0}^{\alpha\beta}\pi_{\alpha\beta}+\frac{1}{2}f^2\nabla^\alpha\nabla_\alpha\chi-\chi\nabla_\alpha f\nabla^\alpha f-\chi f\nabla^\alpha\nabla_\alpha f+\nabla_\alpha Y^\alpha\\
=&\square_{\rH}f (X f+\chi f)-T_{0}^{\alpha\beta}\pi_{\alpha\beta}-\chi\nabla_\alpha f\nabla^\alpha f+\frac{(n-1)^2}{4}\chi f^2+\frac{1}{2}f^2\nabla_\alpha\nabla^\alpha\chi+\nabla_\alpha Y^\alpha.
\end{Eq}
Here, $\pi$ denotes the deformation tensor associated with $X$, given by
\begin{Eq*}
\pi_{\alpha\beta}=\frac{1}{2}(\nabla_\alpha X_\beta+\nabla_\beta X_\alpha)=\frac{1}{2}\kl(g_{\beta\sigma}(\partial_\alpha X^\sigma+\Gamma^\sigma{}_{\alpha\delta}X^\delta)+g_{\alpha\sigma}(\partial_\beta X^\sigma+\Gamma^\sigma{}_{\beta\delta}X^\delta)\kr).
\end{Eq*}

For convenience, we omit the volume element in integrals throughout this paper unless ambiguity arises.
By integrating $\nabla_\alpha J_0^\alpha$ over $S_0^T$ and applying Gauss' theorem, 
we obtain
\begin{Eq}\label{Eq:GioJ0}
\kl(\int_{\rH^n}J_0^t\kr)_{t=0}^T=\int_{S_0^T}\nabla_\alpha J_0^\alpha.
\end{Eq}
With this foundation established, 
we first derive the energy estimate for the unperturbed wave operator $\square_{\rH}$.
Subsequently, we extend our analysis to obtain its local energy estimate.

\subsection{Energy estimate}\label{Sn:eefuo} 
In this subsection, we fix $(X,\chi,Y)=(\partial_t,0,0)$, which yields
\begin{Eq*}
J_0^t=&T_0^{tt}=\frac{1}{2}\kl(|\partial_t f|^2+(\partial_r f)^2+|\spartial f|^2-\frac{(n-1)^2}{4} f^2\kr).
\end{Eq*}
Unlike in the Euclidean case, 
this expression is not automatically non-negative.
However, by using \eqref{Eq:cow}, 
we obtain the refined form
\begin{Eq*}
J_0^t=&\frac{1}{2}\kl(\kl|\partial_t f\kr|^2+|\partial_r f+w f|^2+|\spartial f|^2\kr)+\frac{1}{2}\kl(w^2+w'-\frac{(n-1)^2}{4}\kr) f^2-\frac{1}{2}\hpartial_r\kl(w f^2\kr)\\
=&\frac{1}{2}\kl(\kl|\partial_t f\kr|^2+|\partial_r f+w f|^2+|\spartial f|^2+\frac{(n-1)(n-3)}{4\sinh^2 r} f^2\kr)-\frac{1}{2}\hpartial_r\kl(w f^2\kr).
\end{Eq*}
Since $n\geq 3$, the leading term is non-negative.
Moreover, since $\hpartial_r$ is a radial divergence operator and $f$ is compactly supported on each time slice, 
the integral of the last term over $\rH^n$ vanishes.
Therefore, after integration over $\rH^n$, the quantity $\int_{\rH^n} J_0^t$ is coercive.

On the other hand, since $\partial_t$ is a Killing vector field for the product Lorentzian metric, 
its deformation tensor vanishes, namely $\pi=0$. 
Using \eqref{Eq:DoJ0}, we compute
\begin{Eq*}
\nabla_\alpha J_0^\alpha=&\square_{\rH} f\cdot\partial_t f.
\end{Eq*}
From \eqref{Eq:GioJ0}, we then obtain
\begin{Eq*}
&\kl(\int_{\rH^n}|\tpartial f|^2\kr)_{t=T}\lesssim \|\tpartial f(0)\|_{L^2(\rH^n)}^2+\int_{S_0^T}\kl|\square_{\rH} f\kr|\cdot\kl|\partial_t f\kr|,
\end{Eq*}
and hence
\begin{Eq*}
\|f\|_{E(S_0^T)}\lesssim \|\tpartial f(0)\|_{L^2(\rH^n)}+\|\square_{\rH} f\|_{L^1_tL^2_x(S_0^T)}.
\end{Eq*}
Combining \eqref{Eq:hti} with the above coercive decomposition, 
we obtain the energy part of \Tm{Tm:T} for the unperturbed operator.

Additionally, the above argument also yields a norm equivalence:
for any sufficiently regular $f(t,r,\Theta)$, we have
\begin{Eq}\label{Eq:eoe}
\kl\|\tpartial f\kr\|_{L^2}^2
\approx \|\partial_tf\|_{L^2}^2+\kl\|\sqrt{-\Delta_{\rH}-\rho^2}f\kr\|_{L^2}^2.
\end{Eq}

\subsection{Local energy estimate}\label{Sn:leefuo} 
In this subsection, we set the triplet as $(X,\chi,Y)=\kl(h_R\partial_r,h_Rw,-\frac{1}{2}h_R'\tw f^2 \partial_r\kr)$,
where $h_R=\frac{r}{R+r}$ and $\tw=\tw(r)$,
which will be specified later.
Through computation, we obtain:
\begin{Eq*}
J_0^t=&-h_RT_0^{tr}+h_R w f\partial_t f=h_R\partial_t f(\partial_r f+w f ).
\end{Eq*}
Observing that  $|h_R|\leq 1$, we immediately deduce that $\kl|\int_{\rH^n}J_0^t\kr|\lesssim\|\tpartial f\|_{L^2(\rH^n)}^2$. 
This ensures that the boundary integral in \eqref{Eq:GioJ0} 
can be absorbed by the previously established energy.

On the other hand, to analyze the inner integral in \eqref{Eq:GioJ0},
we first compute the nonzero components of $\pi$ as
\begin{Eq*}
\pi_{rr}=\partial_r X^r=h_R',\quad\pi_{AB}=\frac{g_{AD}\Gamma^{D}{}_{Br}+g_{BD}\Gamma^{D}{}_{Ar}}{2}X^r=\frac{h_R}{\tanh r}g_{AB},
\end{Eq*}
which lead to the following expressions:
\begin{Eq*}
T_0^{rr}\pi_{rr}=&\frac{h_R'}{2}\kl(|\partial_t f|^2+(\partial_r f)^2-|\spartial f|^2+\frac{(n-1)^2}{4} f^2\kr)\\
T_0^{AB}\pi_{AB}=&\frac{h_R}{\tanh r}|\spartial f|^2-h_Rw\kl(\nabla_\sigma f\nabla^\sigma f-\frac{(n-1)^2}{4} f^2\kr).
\end{Eq*}
As a result, \eqref{Eq:DoJ0} simplifies to
\begin{Eq*}
\nabla_\alpha J^\alpha
=&h_R\square_{\rH} f \cdot(\partial_r f+w f)-T_0^{rr}\pi_{rr}-T_0^{AB}\pi_{AB}-h_Rw\kl(\nabla_\sigma f\nabla^\sigma f-\frac{(n-1)^2}{4} f^2\kr)\\
&\quad+\frac{1}{2}\hpartial_r\partial_r(h_Rw) f^2-\frac{1}{2}\hpartial_r\kl(h_R'\tw f^2\kr)\\
=&h_R\square_{\rH} f \cdot\kl(\partial_r f+w f\kr)
-\frac{h_R'}{2}\kl(|\partial_t f|^2+(\partial_r f)^2-|\spartial f|^2+\frac{(n-1)^2}{4} f^2\kr)-\frac{h_R}{\tanh r}|\spartial f|^2\\
&\quad+\frac{1}{2}\hpartial_r\partial_r(h_Rw) f^2-\frac{1}{2}\hpartial_r\kl(h_R'\tw f^2\kr)\\
=&h_R\square_{\rH} f \cdot\kl(\partial_r f+w f\kr)-\frac{h_R'}{2}\kl(|\partial_t f|^2+|\partial_r f+\tw f|^2\kr)-\kl(\frac{h_R}{\tanh r}-\frac{h_R'}{2}\kr)|\spartial f|^2-\frac{1}{2}W f^2,\\
W:=&h_R''(\tw-w)+h_R'\kl(-2w'+\tw'-2w^2+2w\tw-\tw^2+\frac{(n-1)^2}{4}\kr)-h_R(w''+2ww').
\end{Eq*}
Among these terms, we analyze
\begin{Eq*}
\frac{h_R}{\tanh r}-\frac{h_R'}{2}=&\frac{h_R'}{2}+\frac{R(r-\tanh r)+r^2}{(R+r)^2\tanh r}\gtrsim \frac{h_R}{\tanh r}\gtrsim h_R'\geq 0,
\end{Eq*}
which shows that the coefficients of the derivative terms are all negative definite and comparable to $h_R'$.

For the $f^2$ term, we first set $\tw=w$ and simplify $W$. 
By applying  \eqref{Eq:cow}, we obtain
\begin{Eq*}
W=&h_R'\kl(-w'-w^2+\frac{(n-1)^2}{4}\kr)-h_R(w''+2ww')\\
=&\frac{(n-1)(n-3)}{2\sinh^2 r}\kl(\frac{h_R}{\tanh r}-\frac{h_R'}{2}\kr)\\
\gtrsim&(n-3) \frac{h_R}{\sinh^{2}r\tanh r}.
\end{Eq*}
Now, incorporating these results into \eqref{Eq:GioJ0}, we get
\begin{Eq*}
&\int_{S_0^T}h_R'|\tpartial f|^2+(n-3) \frac{h_R}{\sinh^{2}r\tanh r}|f|^2\lesssim \|f\|_{E(S_0^T)}^2+\int_{S_0^T}h_R\kl|\square_{\rH} f\kr|\cdot\kl|\partial_r f+w f\kr|.
\end{Eq*}
Next, we refine our choice by setting $\tw=w-\frac{1}{r}$. 
Applying \eqref{Eq:cow} once again, we derive
\begin{Eq*}
W=&-\frac{1}{r}h_R''+h_R'\kl(-w'-w^2+\frac{(n-1)^2}{4}\kr)-h_R(w''+2ww')\\
\geq&\frac{1}{r}|h_R''|=\frac{2R}{r(r+R)^3}\geq 0.
\end{Eq*}
Finally, substituting this into \eqref{Eq:GioJ0} and combining it with the previous result, 
we obtain
\begin{Eq*}
&\int_{S_0^T}h_R'|\tpartial f|^2+\kl((n-3) \frac{h_R}{\sinh^{2}r\tanh r}+r^{-1}|h_R''|\kr)|f|^2\lesssim \|f\|_{E(S_0^T)}^2+\int_{S_0^T}h_R\kl|\square_{\rH} f\kr|\cdot\kl|\partial_r f+w f\kr|.
\end{Eq*}

Finally, for any fixed $j$ with $r\approx R\approx 2^j$, we observe that
\begin{Eq*}
h_R'\approx 2^{-j},\qquad r^{-1}|h_R''|\approx 2^{-3j}.
\end{Eq*}
Substituting these into the previous inequality 
and combining the result with the energy estimate obtained in \Sn{Sn:eefuo}, 
we derive
\begin{Eq}\label{Eq:leefuo}
&\int_{S_0^T}h_R'|\tpartial f|^2+(n-3) \frac{h_R}{\sinh^{2}r\tanh r}|f|^2+\| f\|_{E\cap LE(S_0^T)}^2\\
\lesssim&\int_{S_0^T}|\square_{\rH} f|\kl|\kl(\partial_r f+w f,\partial_t f\kr)\kr|+\|\tpartial f(0)\|_{L^2(\rH^n)}^2\\
\lesssim&\kl\|\square_{\rH} f\kr\|_{[L^1L^2+LE^*](S_0^T)}\cdot\| f\|_{[E\cap LE](S_0^T)}+\|\tpartial f(0)\|_{L^2(\rH^n)}^2.
\end{Eq}
This completes the proof of \Tm{Tm:T} for the unperturbed operator.

\section{Energy and local energy estimates for the perturbed wave operator}\label{Sn:efpo} 
In this section, we present the proof of \Tm{Tm:T} in the perturbed setting.
To analyze the perturbed wave operator, 
we first examine the principal part of the perturbation.
We introduce the operator $\square_\gamma$ and define the associated energy-momentum tensor and current for $f$ as follows:
\begin{Eq*}
\square_\gamma:=&\nabla_\alpha\kl(\gamma^{\alpha\beta}\nabla_\beta\kr),\\
T_\gamma^{\alpha\beta}:=&\gamma^{\alpha \sigma}\nabla_\sigma f\nabla^\beta f-\frac{1}{2}g^{\alpha \beta}\gamma^{\sigma \delta}\nabla_\sigma  f\nabla_\delta f,\\
J_\gamma^\alpha:=&-T_\gamma^{\alpha\beta}X_\beta+\frac{1}{2}\gamma^{\alpha \beta}\nabla_\beta\chi f^2-\gamma^{\alpha\beta}\chi f\nabla_\beta f+Y^\alpha,
\end{Eq*}
for any given triplet $(X,\chi,Y)$.
Through detailed calculations, we derive the divergence of the current:
\begin{Eq}\label{Eq:DoJg}
\nabla_\alpha J_\gamma^\alpha =&-\nabla_\alpha \kl(\gamma^{\alpha \sigma}\nabla_\sigma  f\nabla^\beta  f-\frac{1}{2}g^{\alpha \beta}\gamma^{\sigma \delta}\nabla_\sigma  f\nabla_\delta  f\kr)X_\beta -T_\gamma^{\alpha \beta}\nabla_\alpha X_\beta \\
&\quad+\nabla_\alpha \kl(\frac{1}{2}\gamma^{\alpha \beta}\nabla_\beta \chi f^2-\gamma^{\alpha \beta}\chi f\nabla_\beta  f+Y^\alpha \kr)\\
=&-\nabla_\alpha \kl(\gamma^{\alpha \sigma}\nabla_\sigma  f\kr)X f +\frac{1}{2}X^\alpha\nabla_\alpha\gamma^{\sigma \delta}\nabla_\sigma  f\nabla_\delta  f -T_\gamma^{\alpha \beta}\nabla_\alpha X_\beta +\frac{1}{2}\nabla_\alpha \kl(\gamma^{\alpha \beta}\nabla_\beta \chi\kr) f^2\\
&\quad -\nabla_\alpha (\gamma^{\alpha \beta}\nabla_\beta  f)\chi f-\chi\gamma^{\alpha \beta}\nabla_\alpha  f\nabla_\beta  f+\nabla_\alpha Y^\alpha \\
=&-\square_\gamma f( X f+\chi f)+\frac{1}{2}X^\alpha \nabla_\alpha \gamma^{\sigma \delta}\nabla_\sigma  f\nabla_\delta  f -T_\gamma^{\alpha \beta}\nabla_\alpha X_\beta 
-\chi\gamma^{\alpha \beta}\nabla_\alpha  f\nabla_\beta  f\\
&\quad+\frac{1}{2}\nabla_\alpha \kl(\gamma^{\alpha \beta}\nabla_\beta \chi\kr) f^2+\nabla_\alpha Y^\alpha.
\end{Eq}

Integrating $\nabla_\alpha (J_0^\alpha+J_\gamma^\alpha)$ over $S_0^T$
and applying Gauss' theorem,
we obtain
\begin{Eq}\label{Eq:GioJg}
\kl(\int_{\rH^n}(J_0^t+J_\gamma^t)\kr)_{t=0}^T=\int_{S_0^T}\nabla_\alpha (J_0^\alpha+ J_\gamma^\alpha).
\end{Eq}
Based on this integral, 
we first establish the energy estimate for the principal part of the perturbed wave operator, 
$\square_{\rH}+\square_{\gamma}$.
We then extend our analysis to derive its local energy estimate.
Finally, we examine the remaining perturbations within $P$ and provide a comprehensive concluding discussion.

\subsection{Energy estimate}
To ensure consistency with our previous estimates for $\tpartial f$,
we take$
(X,\chi,Y)=(\partial_t,0,Y^t\partial_t+Y^r\partial_r+Y^A\partial_A)$,
where the components of $Y$ are given by
\begin{Eq*}
Y^t=&-\frac{1}{2}\partial_r\gamma^{rr} w f^2-\frac{(n-1)^2}{8}\gamma^{rr} f^2-\frac{1}{2}\snabla_A\gamma^{rA}w f^2,\\
Y^r=&\frac{1}{2}\partial_t\gamma^{rr}w f^2,\\
Y^A=&\frac{1}{2}\partial_t\gamma^{rA}w f^2.
\end{Eq*}
With this setup, we derive the expression for $J_\gamma^t$:
\begin{Eq*}
J_\gamma^t=&T_\gamma^{tt}+Y^t=-\gamma^{t\sigma}\nabla_\sigma f\partial_t f+\frac{1}{2}\gamma^{\sigma\delta}\nabla_\sigma f\nabla_\delta f+Y^t\\
=&\kl(-\frac{1}{2}\gamma^{tt}|\partial_t f|^2+\frac{1}{2}\gamma^{AB} \partial_A f\partial_B f\kr)
+\frac{1}{2}\kl(\gamma^{rr}(\partial_r f)^2-\partial_r\gamma^{rr}w f^2-\frac{(n-1)^2}{4}\gamma^{rr} f^2\kr)\\
&\quad+\kl(\gamma^{rA}\partial_r f \partial_A f-\frac{1}{2}\snabla_A\gamma^{rA}w f^2\kr)\\
\equiv& I_{21}+\frac{1}{2}I_{22}+I_{23}.
\end{Eq*}
Following the assumptions in \Hs{Hs:hoP}, 
we immediately observe that $|I_{21}|\lesssim\varepsilon' |\tpartial f|^2$.
For $I_{22}$, by reformulating and applying \eqref{Eq:cow}, we obtain:
\begin{Eq*}
I_{22}=&\gamma^{rr}|\partial_r f+w f|^2+\gamma^{rr}\kl(w^2+w'-\frac{(n-1)^2}{4}\kr) f^2-\hpartial_r(\gamma^{rr}w f^2)\\
=&\gamma^{rr}|\partial_r f+w f|^2+\frac{(n-1)(n-3)}{4\sinh^2 r}\gamma^{rr} f^2-\hpartial_r(\gamma^{rr}w f^2)\\
\equiv& I_{22}'-\hpartial_r(\gamma^{rr}w f^2).
\end{Eq*}
From \Hs{Hs:hoP}, it follows that
$|I_{22}'|\lesssim\varepsilon'\cdot(|\tpartial f|^2+r^{-2}| f|^2)$.
Similarly, for $I_{23}$, we compute:
\begin{Eq*}
I_{23}
=&\gamma^{rA}\kl(\partial_r f+w f\kr) \partial_A f-\frac{1}{2}\snabla_A\kl(\gamma^{rA}w f^2\kr)
\equiv I_{23}'-\frac{1}{2}\snabla_A\kl(\gamma^{rA}w f^2\kr).
\end{Eq*}
Again, invoking \Hs{Hs:hoP}, we deduce that $|I_{23}'|\lesssim \varepsilon' |\tpartial f|^2$.
Since the remaining terms in $J_\gamma^t$ are divergence terms, 
we arrive at the following estimate:
\begin{Eq*}
\int_{\rH^n}(J_0^t+J_\gamma^t)\lesssim \int_{\rH^n}J_0^t+\varepsilon'\kl(|\tpartial f|^2+r^{-2}| f|^2\kr)\lesssim \int_{\rH^n}J_0^t.
\end{Eq*}
This confirms that the boundary integral in \eqref{Eq:GioJg} can be absorbed into the previously established energy.

On the other hand, noting that $\nabla_\alpha X_\beta=0$, 
equation \eqref{Eq:DoJg} simplifies to
\begin{Eq*}
\nabla_\alpha J_\gamma^\alpha
=&-\square_\gamma f\partial_t f
+\frac{1}{2}\partial_t\gamma^{\sigma\delta}\nabla_\sigma f\nabla_\delta f
+\partial_tY^t+\hpartial_rY^r+\snabla_A Y^A\\
=&-\square_\gamma f\partial_t f
+\kl(\frac{1}{2}\partial_t\gamma^{tt}|\partial_t f|^2
+\partial_t\gamma^{tA}\partial_t f\partial_A f
+\frac{1}{2}\partial_t\gamma^{AB}\partial_{A} f\partial_B f\kr)
+\partial_t\gamma^{tr}\partial_t f\partial_r f\\
&\quad+\kl(\partial_t\gamma^{rA}\partial_r f\partial_A f
+\partial_t\kl(-\frac{1}{2}\snabla_A\gamma^{rA}w f^2\kr)
+\snabla_A\kl(\frac{1}{2}\partial_t\gamma^{rA}w f^2\kr)\kr)\\
&\quad+\frac{1}{2}\kl(\partial_t\gamma^{rr}(\partial_r f)^2
+\partial_t\kl(-\partial_r\gamma^{rr} w f^2-\frac{(n-1)^2}{4}\gamma^{rr} f^2\kr)
+\hpartial_r\kl(\partial_t\gamma^{rr}w f^2\kr)\kr)\\
\equiv&-\square_\gamma f\partial_t f+I_{31}+I_{32}+I_{33}+\frac{1}{2}I_{34}.
\end{Eq*}
First, applying \Hs{Hs:hoP}, we derive
\begin{Eq*}
|I_{31}|\lesssim& \varepsilon'(r)r^{-1}|\tpartial f|^2.
\end{Eq*}
Next, for $I_{32}$, utilizing \Hs{Hs:hoP}, we conclude
\begin{Eq*}
I_{32}=&\partial_t\gamma^{tr}\partial_t f\kl(\partial_r f+w f\kr)-\partial_t\gamma^{tr} w f\partial_t f\\
|I_{32}|\lesssim& \varepsilon'(r)r^{-1}\kl(|\partial_t f|^2+|\partial_r f+w f|^2+\frac{| f|^2}{r^2}\kr)\lesssim\varepsilon'(r)r^{-1}(|\tpartial f|^2+r^{-2}| f|^2).
\end{Eq*}
Then, for $I_{33}$, employing \Hs{Hs:hoP}, we obtain
\begin{Eq*}
I_{33}=&\partial_t\gamma^{rA}(\partial_r f+w f)\partial_A f-\snabla_A\gamma^{rA}w f\partial_t f\\
|I_{33}|\lesssim&\varepsilon'(r)r^{-1}\kl(|\partial_t f|^2+|\partial_r f+w f|^2+|\spartial f|^2+\frac{| f|^2}{r^2}\kr)\lesssim\varepsilon'(r)r^{-1}(|\tpartial f|^2+r^{-2}| f|^2).
\end{Eq*}
Finally, for $I_{34}$, leveraging both \Hs{Hs:hoP} and \eqref{Eq:cow}, we establish
\begin{Eq*}
I_{34}=&\partial_t\gamma^{rr}|\partial_r f+w f|^2-\kl(2\partial_r\gamma^{rr}w+\frac{(n-1)^2}{2}\gamma^{rr}\kr) f\partial_t f
+\partial_t\gamma^{rr}\kl(w'+w^2-\frac{(n-1)^2}{4}\kr) f^2\\
=&\partial_t\gamma^{rr}|\partial_r f+w f|^2-\kl(2\partial_r\gamma^{rr}w+\frac{(n-1)^2}{2}\gamma^{rr}\kr) f\partial_t f
+\frac{(n-1)(n-3)}{4\sinh^2r}\partial_t\gamma^{rr} f^2\\
|I_{34}|\lesssim&\varepsilon'(r)r^{-1}\kl(|\partial_t f|^2+|\partial_r f+w f|^2+\frac{| f|^2}{r^2}\kr)\lesssim\varepsilon'(r)r^{-1}(|\tpartial f|^2+r^{-2}| f|^2).
\end{Eq*}

Combining all these results, we arrive at
\begin{Eq*}
\sum_{k=1}^4\kl|\int_{S_0^T}I_{3k}\kr|
\lesssim& \sum_j\int_{[0,T]\times A_j}\varepsilon'(r)r^{-1}(|\tpartial f|^2+r^{-2}| f|^2)\\
\lesssim& \sum_{j}\varepsilon'_j\|f\|_{LE(S_0^T)}^2\lesssim \varepsilon'\|f\|_{LE(S_0^T)}^2.
\end{Eq*}
Substituting these into \eqref{Eq:GioJg} and reviewing the energy-estimate argument for the unperturbed case,
we finally arrive at
\begin{Eq*}
\|f\|_{E(S_0^T)}^2\lesssim \|\tpartial f(0)\|_{L^2(\rH^n)}^2+\int_{S_0^T}|\square_\rH f+\square_\gamma f|\cdot|\tpartial f|+\varepsilon'\|f\|_{LE(S_0^T)}^2.
\end{Eq*}

\subsection{Local energy estimate}
In this subsection, we set
$
(X,\chi,Y)=\kl(h_R\partial_r,h_Rw,Y^t\partial_t+Y^r\partial_r+Y^A\partial_A\kr)$,
with $h_R:=\frac{r}{R+r}$, and the components of $Y$ are given by
\begin{Eq*}
Y^t=&\frac{1}{2}\partial_r\gamma^{tr}h_Rw f^2,\\
Y^r=&-\frac{1}{2}\kl(\partial_t\gamma^{tr}h_R+\snabla_B\gamma^{rB}h_R+\gamma^{rr}h_R'\kr)w f^2-\frac{(n-1)^2}{8}\gamma^{rr}h_R f^2,\\
Y^A=&\frac{1}{2}\partial_r\gamma^{rA}h_Rw f^2.
\end{Eq*}
Through direct computation, we obtain
\begin{Eq*}
J_\gamma^t=&-h_RT_\gamma^{tr}+\frac{1}{2}\gamma^{tr}\partial_r\chi f^2-\gamma^{t\alpha}\chi f\partial_\alpha f+Y^t\\
=&-\gamma^{t\alpha}h_R\partial_\alpha f(\partial_r f+w f)+\frac{1}{2}\gamma^{tr}\partial_r(h_Rw) f^2+\frac{1}{2}\partial_r\gamma^{tr}h_Rw f^2\\
=&-\gamma^{tt}h_R\partial_t f(\partial_r f+w f)-\gamma^{tA}h_R(\partial_r f+w f)\partial_A f-\gamma^{tr}h_R\kl(\partial_r f+w f\kr)^2+\frac{1}{2}\hpartial_r\kl(\gamma^{tr}h_Rw f^2\kr).
\end{Eq*}
Noting that $|h_R|\leq 1$ and applying \Hs{Hs:hoP}, 
we conclude that
$|\int_{\rH^n}J_\gamma^t|\lesssim \varepsilon' \|\tpartial f\|_{L^2(\rH^n)}^2$.
Referring back to the procedure for the unperturbed case, 
this ensures that the boundary integral in \eqref{Eq:GioJg} can also be absorbed into the previously established energy.

On the other hand, to analyze the inner integral in \eqref{Eq:GioJg},
we first compute the nonzero components of $\nabla_\alpha X_\beta$ as
\begin{Eq*}
\nabla_r X_r=\partial_r X^r=h_R',\quad \nabla_A X_{B}=g_{BD}\Gamma^D{}_{Ar}X^r=\frac{h_R}{\tanh r} g_{AB},
\end{Eq*}
which leads to the expressions
\begin{Eq*}
T_{\gamma}^{rr}\nabla_r X_r=&h_R'\kl(\gamma^{r\sigma}\partial_\sigma f\partial_r f-\frac{1}{2}\gamma^{\sigma \delta}\partial_\sigma f\partial_\delta f\kr),\\
T_\gamma^{AB}\nabla_AX_B=&\frac{h_R}{\tanh r}\kl(\gamma^{A \sigma}\partial_\sigma f\partial_A f-\frac{n-1}{2}\gamma^{\sigma \delta}\partial_\sigma f\partial_\delta f \kr).
\end{Eq*}
As a result, \eqref{Eq:DoJg} simplifies to
\begin{Eq*}
\nabla_\alpha J_\gamma^\alpha=&-h_R\square_{\gamma} f( \partial_r f+w f)+\frac{1}{2}X^r\kl(\partial_r \gamma^{\sigma \delta}+2\Gamma^\sigma{}_{r\alpha} \gamma^{\alpha \delta}\kr)\partial_\sigma f\partial_\delta f-T_{\gamma}^{r r}\nabla_r X_r
-T_{\gamma}^{A B}\nabla_A X_B\\
&\quad
-\gamma^{\alpha \beta}(h_Rw)\partial_\alpha  f\partial_\beta f+\frac{1}{2}\nabla_\alpha \kl(\gamma^{\alpha \beta}\partial_\beta(h_Rw)\kr) f^2+\nabla_\alpha Y^\alpha\\
=&-h_R\square_{\gamma} f( \partial_r f+w f)
+\frac{\partial_r\gamma^{\sigma \delta}h_R}{2}\partial_\sigma f\partial_\delta f 
-h_R'\kl(\gamma^{r\sigma}\partial_\sigma f\partial_r f
-\frac{1}{2}\gamma^{\sigma \delta}\partial_\sigma f\partial_\delta f\kr)
\\
&\quad+\frac{1}{2}\partial_t\kl(\gamma^{tr}\partial_r(h_Rw)\kr)f^2
+\frac{1}{2}\hpartial_r\kl(\gamma^{rr}\partial_r(h_Rw)\kr)f^2
+\frac{1}{2}\snabla_A \kl(\gamma^{rA}\partial_r(h_Rw)\kr)f^2
+\partial_t\kl(\frac{1}{2}\partial_r\gamma^{tr}h_Rw f^2\kr)\\
&\quad+\hpartial_r\kl(-\frac{1}{2}\kl(\partial_t\gamma^{tr}h_R+\snabla_B\gamma^{rB}h_R+\gamma^{rr}h_R'\kr)w f^2-\frac{(n-1)^2}{8}\gamma^{rr}h_R f^2\kr)
+\snabla_A  \kl(\frac{1}{2}\partial_r\gamma^{rA}h_Rw f^2\kr)\\
=&\kl(\frac{1}{2}\kl(\partial_r\gamma^{tt}h_R+\gamma^{tt}h_R'\kr)|\partial_t f|^2
+\kl(\partial_r\gamma^{tA}h_R+\gamma^{tA}h_R'\kr)\partial_t f \partial_A  f
+\frac{1}{2}\kl(\partial_r\gamma^{AB}h_R+\gamma^{AB}h_R'\kr) \partial_A  f\partial_B f\kr)\\
&+\kl(\partial_r\gamma^{tr}h_R\partial_t f\partial_r f
+\frac{1}{2}\partial_t\kl(\gamma^{tr}\partial_r(h_Rw)\kr) f^2
+\partial_t\kl(\frac{1}{2}\partial_r\gamma^{tr}h_Rw f^2\kr)
+\hpartial_r\kl(-\frac{1}{2}\partial_t\gamma^{tr}h_Rw f^2\kr)\kr)\\
&+\kl(\partial_r\gamma^{rA}h_R\partial_r f \partial_A  f
+\frac{1}{2}\snabla_A \kl(\gamma^{rA}\partial_r(h_Rw)\kr) f^2
+\snabla_A \kl(\frac{1}{2}\partial_r\gamma^{rA}h_Rw f^2\kr)
+\hpartial_r\kl(-\frac{1}{2}\snabla_A \gamma^{rA}h_Rw f^2\kr)\kr)\\
&+\frac{1}{2}\kl(\kl(\partial_r\gamma^{rr}h_R-\gamma^{rr}h_R'\kr)|\partial_r f|^2
+\hpartial_r\kl(\gamma^{rr}\partial_r(h_Rw)\kr) f^2
+\hpartial_r\kl(-\gamma^{rr}h_R'w f^2\kr)
-\frac{(n-1)^2}{4}\hpartial_r(\gamma^{rr}h_R f^2)\kr)\\
&-h_R\square_{\gamma} f(\partial_r f+w f)\\
\equiv& I_{41}+I_{42}+I_{43}+\frac{1}{2}I_{44}-h_R\square_{\gamma} f(\partial_r f+w f).
\end{Eq*}
First, for $I_{41}$, 
invoking \Hs{Hs:hoP}, we immediately obtain $|I_{41}|\lesssim (\varepsilon'(r)r^{-1}+\varepsilon' h_R')|\tpartial f|^2$. 
Next, for $I_{42}$, 
applying \Hs{Hs:hoP}, we derive
\begin{Eq*}
I_{42}=&\partial_r\gamma^{tr}h_R\partial_t f(\partial_r f+w f)
-\partial_t\gamma^{tr}h_Rw f(\partial_r f+w f),\\
|I_{42}|\lesssim&\varepsilon'(r)r^{-1}\kl(|\partial_t f|^2+|\partial_r f+w f|^2+\frac{| f|^2}{r^{2}}\kr)\lesssim \varepsilon'(r)r^{-1}(|\tpartial f|^2+r^{-2}| f|^2).
\end{Eq*}
Similarly, for $I_{43}$, using \Hs{Hs:hoP}, we obtain
\begin{Eq*}
I_{43}=&\partial_r\gamma^{rA}h_R(\partial_r f+w f) \partial_A  f
-\snabla_A \gamma^{rA}h_Rw f(\partial_r f+w f),\\
|I_{43}|\lesssim& \varepsilon'(r)r^{-1}\kl(|\partial_r f+w f|^2+|\spartial f|^2+\frac{| f|^2}{r^2}\kr)\lesssim \varepsilon'(r)r^{-1}(|\tpartial f|^2+r^{-2}| f|^2).
\end{Eq*}
Finally, for $I_{44}$, leveraging both \eqref{Eq:cow} 
and the fact that the coefficients satisfy \Hs{Hs:hoP}, 
we establish
\begin{Eq*}
I_{44}
=&\kl(\partial_r\gamma^{rr}h_R-\gamma^{rr}h_R'\kr)|\partial_r f+w f|^2
-2\partial_r\gamma^{rr}h_Rw f(\partial_r f+w f)
-\frac{(n-1)^2}{4}\gamma^{rr}h_R\hpartial_r( f^2)\\
&+\kl(\partial_r\gamma^{rr}h_R+\gamma^{rr}h_R'\kr)\kl(w^2+w'-\frac{(n-1)^2}{4}\kr) f^2+\gamma^{rr}h_R\kl(w''+2ww'\kr) f^2\\
=&\kl(\partial_r\gamma^{rr}h_R-\gamma^{rr}h_R'\kr)|\partial_r f+w f|^2
-\kl(2\partial_r\gamma^{rr}h_Rw+\frac{(n-1)^2}{2}\gamma^{rr}h_R\kr) f(\partial_r f+w f)\\
&+\frac{(n-1)(n-3)}{4\sinh^2r}\kl(\partial_r\gamma^{rr}h_R+\gamma^{rr}h_R'\kr) f^2-\frac{(n-1)(n-3)}{2\sinh^2 r\tanh r}\gamma^{rr}h_R f^2,\\
|I_{44}|\lesssim&\varepsilon'(r)r^{-1}\kl(|\tpartial f|^2+r^{-2}| f|^2\kr)+\varepsilon' h_R'|\tpartial f|^2+(n-3)\frac{h_R}{\sinh^{2}r\tanh r}| f|^2
\end{Eq*}

Combining all these results, we obtain
\begin{Eq*}
\sum_{k=1}^4\kl|\int_{S_0^T}I_{4k}\kr|
\lesssim& \sum_j\int_{[0,T]\times A_j}\varepsilon'(r)r^{-1}\kl(|\tpartial f|^2+r^{-2}| f|^2\kr)\\
&\quad+\varepsilon' \int_{S_0^T}h_R'|\tpartial f|^2+(n-3)\frac{h_R}{\sinh^{2}r\tanh r}| f|^2\\
\lesssim &\varepsilon'\kl(\|f\|_{LE(S_0^T)}^2+\int_{S_0^T}h_R'|\tpartial f|^2+(n-3)\frac{h_R}{\sinh^{2}r\tanh r}| f|^2\kr).
\end{Eq*}
Substituting these into \eqref{Eq:GioJg},
incorporating the energy estimate for the perturbed case,
and reviewing the local-energy argument for the unperturbed case,
we finally derive
\begin{Eq*}
LHS:=&\| f\|_{E\cap LE(S_0^T)}^2+\int_{S_0^T}h_R'|\tpartial f|^2+(n-3)\frac{h_R}{\sinh^{2}r\tanh r}| f|^2\\
\lesssim&\|\tpartial f(0)\|_{L^2(\rH^n)}^2+\varepsilon' LHS+\int_{S_0^T}|\square_\rH f+\square_\gamma f|\cdot|\tpartial f|.
\end{Eq*}
This immediately establishes \Tm{Tm:T} for the case $P=\square_{\rH}+\nabla_\alpha(\gamma^{\alpha\beta}\nabla_\beta)$. 

\subsection{Treatment of lower-order perturbations}
Building on the result established in the previous subsection,
we obtain
\begin{Eq*}
\| f\|_{[E\cap LE](S_0^T)}
\lesssim&\|\tpartial f(0)\|_{L^2(\rH^n)}+\|\square_\rH f+\square_\gamma f\|_{[L^1L^2+LE^*](S_0^T)}\\
\lesssim&\|\tpartial f(0)\|_{L^2(\rH^n)}+\|Pf\|_{[L^1L^2+LE^*](S_0^T)}+\|(G^\beta\partial_\beta f,Vf)\|_{LE^*(S_0^T)}.
\end{Eq*}
Thus, to complete the proof of \Tm{Tm:T},
it remains to control the error term arising from lower-order perturbations.
Applying \Hs{Hs:hoP}, we derive
\begin{Eq*}
\|G^\beta\partial_\beta f\|_{LE^*(S_0^T)}
\lesssim&\sum_{j}2^{j/2}\kl\|\kl(G^t\partial_t f,G^r(\partial_r f+w f),G^rw f,G^{A}\partial_{A} f\kr)\kr\|_{L^2L^2([0,T]\times A_j)}\\
\lesssim&\sum_{j}2^{j/2}\varepsilon'_j\|r^{-1}(\tpartial f,r^{-1} f)\|_{L^2L^2([0,T]\times A_j)}\\
\lesssim&\sum_j\varepsilon'_j\|f\|_{LE(S_0^T)}\lesssim \varepsilon'\|f\|_{LE(S_0^T)},\\
\|V f\|_{LE^*(S_0^T)}
\lesssim&\sum_{j}2^{j/2}\varepsilon'_j\|r^{-2} f\|_{L^2L^2([0,T]\times A_j)}
\lesssim\varepsilon'\|f\|_{LE(S_0^T)}.
\end{Eq*}
This concludes the proof.

\section{Global existence for 
semilinear wave equations}\label{Sn:SLWgwp}

In this section, we prove \Tm{Tm:P2}. Throughout this section, we only consider radial solutions and radial coefficients. In particular, all angular derivatives vanish on the iterates, and any $L^q(\rS^{n-1})$ norm reduces to an absolute value up to a harmless constant.

\subsection{Iteration scheme and bootstrap quantities}

We begin by introducing the iteration scheme. Set $\phi_0=0$, and define $\phi_{k+1}$ by
\begin{Eq*}
\begin{cases}
P\phi_{k+1}=F_{p_1}(\phi_k)+F_{p_2}(\partial_t\phi_k)+F_{p_3}(\partial_r\phi_k),\\
\phi_{k+1}(0,r)=\varepsilon\psi_0(r),\quad \partial_t\phi_{k+1}(0,r)=\varepsilon\psi_1(r).
\end{cases}
\end{Eq*}
Here $p_i>1$,  $|F_{p_i}(s)|+|sF_{p_i}'(s)|\lesssim |s|^{p_i}$ for $i=1,2,3$, and $\varepsilon>0$ is sufficiently small.

Since $P$ satisfies \eqref{Eq:hoP2}, it also satisfies \Hs{Hs:hoP}. Moreover, without loss of generality, we may first truncate the initial data and then pass to the limit by approximation. Therefore, \Tm{Tm:T} is applicable to each iterate.

For the vector fields $Z$ introduced in \Sn{Sn:p}, we define
\begin{Eq*}
\varE_k:=\|\phi_k\|_{\mathcal{E}_1},\qquad
\varF_k:=\|\phi_{k+1}-\phi_k\|_{\mathcal{E}_0}.
\end{Eq*}
where $\|f\|_{\mathcal{E}_m}:=\|Z^{\leq m}f\|_{[E\cap LE](S_0^\infty)}$ for $m=0,1$.

Our goal is twofold. First, we prove that the iterates are uniformly bounded in $\mathcal{E}_1$, namely, $\varE_k\lesssim \varepsilon$ uniformly in $k$. Next, we show that the iteration is contractive with respect to the base norm $\mathcal{E}_0$ by establishing
$\varF_{k+1}\leq \frac12 \varF_k$.
These two estimates together imply that the iteration converges in $\mathcal{E}_0$ to a global finite-energy solution $\phi$.

Before proceeding, we record the following bound on the initial data:
\begin{Eq}\label{Eq:coi2}
\|\tpartial^{\leq 1}Z^{\leq 1}\phi_k(0)\|_{L^2}\lesssim \varepsilon,
\end{Eq}
where the implicit constant is independent of $k$. The proof of \eqref{Eq:coi2} will be given in \Sn{Sn:pocoi2}.

\subsection{A priori bounds for the iterates}

By \eqref{Eq:hti}, together with the argument used in \Sn{Sn:mii}, we obtain, for any $0<\delta_0,\delta_1\ll 1$,
\begin{Eq*}
&\kl\|r^{-\frac{1}{2}}\sinh^{\frac{n-1}{2}}r\, Z^{\leq 1}\phi_k\kr\|_{L^\infty L^\infty}
+\kl\|r^{-\frac{1}{2}+\delta_0}\kl<r\kr>^{-\delta_0-\delta_1}(\tpartial,r^{-1})Z^{\leq 1}\phi_k\kr\|_{L^2L^2}\\
\lesssim& \|\phi_k\|_{\mathcal{E}_1}
=\varE_k.
\end{Eq*}

We now prove the uniform bound for $\varE_k$ by induction. Clearly, $\varE_0=0$. Assuming that $\varE_k\lesssim \varepsilon$ for some $k\geq 0$, 
we shall show that the same bound also holds for $\varE_{k+1}$.

By \eqref{Eq:Pgei}, \eqref{Eq:coi2}, and the dual form of \eqref{Eq:sf}, we obtain
\begin{Eq*}
\varE_{k+1}\lesssim&
\|\tpartial Z^{\leq 1}\phi_{k+1}(0)\|_{L^2}
+\kl\|Z^{\leq 1}F_{p_1}(\phi_k)\kr\|_{LE^*}
+\kl\|Z^{\leq 1}F_{p_2}(\partial_t\phi_k)\kr\|_{LE^*}\\
&+\kl\|Z^{\leq 1}F_{p_3}(\partial_r\phi_k)\kr\|_{LE^*}
+\|[Z,P]\phi_{k+1}\|_{LE^*}\\
\lesssim& \varepsilon + I_{51}+I_{52}+I_{53}+I_{54},
\end{Eq*}
where
\begin{Eq*}
I_{51}:=&\kl\|r^{\frac{1}{2}-\delta_0}\kl<r\kr>^{\delta_0+\delta_1}Z^{\leq 1}F_{p_1}(\phi_k)\kr\|_{L^2L^2},\quad
&I_{52}:=&\kl\|r^{\frac{1}{2}-\delta_0}\kl<r\kr>^{\delta_0+\delta_1}Z^{\leq 1}F_{p_2}(\partial_t\phi_k)\kr\|_{L^2L^2},\\
I_{53}:=&\kl\|r^{\frac{1}{2}-\delta_0}\kl<r\kr>^{\delta_0+\delta_1}Z^{\leq 1}F_{p_3}(\partial_r\phi_k)\kr\|_{L^2L^2},\quad
&I_{54}:=&\|[Z,P]\phi_{k+1}\|_{LE^*}.
\end{Eq*}

We first estimate $I_{51}$. Since $p_1<\frac{n+2}{n-2}$, we have
$\frac{p_1+3}{2}+\frac{(p_1-1)(1-n)}{2}>0$.
Hence,
\begin{Eq*}
I_{51}\lesssim&
\kl\|r^{-\frac{p_1+2}{2}+\delta_0}\kl<r\kr>^{-\delta_0-\delta_1}
\sinh^{\frac{(p_1-1)(n-1)}{2}}r\, Z^{\leq 1}F_{p_1}(\phi_k)\kr\|_{L^2L^2}\\
&\qquad\cdot
\kl\|r^{\frac{p_1+3}{2}-2\delta_0}\kl<r\kr>^{2\delta_0+2\delta_1}
\sinh^{\frac{(p_1-1)(1-n)}{2}}r\kr\|_{L_r^\infty}\\
\lesssim&
\kl\|r^{-\frac{1}{2}}\sinh^{\frac{n-1}{2}}r\, \phi_k\kr\|_{L^\infty L^\infty}^{p_1-1}
\kl\|r^{-\frac{3}{2}+\delta_0}\kl<r\kr>^{-\delta_0-\delta_1}Z^{\leq 1}\phi_k\kr\|_{L^2L^2}\\
\lesssim& \varE_k^{p_1}
\lesssim \varepsilon^{p_1-1}\varE_k.
\end{Eq*}

Next we estimate $I_{52}$. Since $p_2<\frac{n}{n-2}$, we have
$\frac{p_2+1}{2}+\frac{(p_2-1)(1-n)}{2}>0$.
Therefore,
\begin{Eq*}
I_{52}\lesssim&
\kl\|r^{-\frac{p_2}{2}+\delta_0}\kl<r\kr>^{-\delta_0-\delta_1}
\sinh^{\frac{(p_2-1)(n-1)}{2}}r\, Z^{\leq 1}F_{p_2}(\partial_t\phi_k)\kr\|_{L^2L^2}\\
&\qquad\cdot
\kl\|r^{\frac{p_2+1}{2}-2\delta_0}\kl<r\kr>^{2\delta_0+2\delta_1}
\sinh^{\frac{(p_2-1)(1-n)}{2}}r\kr\|_{L_r^\infty}\\
\lesssim&
\kl\|r^{-\frac{1}{2}}\sinh^{\frac{n-1}{2}}r\, \partial_t\phi_k\kr\|_{L^\infty L^\infty}^{p_2-1}
\kl\|r^{-\frac{1}{2}+\delta_0}\kl<r\kr>^{-\delta_0-\delta_1}Z^{\leq 1}\partial_t\phi_k\kr\|_{L^2L^2}\\
\lesssim&
\kl\|r^{-\frac{1}{2}}\sinh^{\frac{n-1}{2}}r\, Z\phi_k\kr\|_{L^\infty L^\infty}^{p_2-1}
\kl\|r^{-\frac{1}{2}+\delta_0}\kl<r\kr>^{-\delta_0-\delta_1}\tpartial Z^{\leq 1}\phi_k\kr\|_{L^2L^2}\\
\lesssim& \varE_k^{p_2}
\lesssim \varepsilon^{p_2-1}\varE_k.
\end{Eq*}

Before estimating $I_{53}$, we recall that for a radial function $f(t,r)$, one may identify $|\partial_r f|$ with $|Kf|$, and hence identify $|\partial_{t,r}f|$ with $|Zf|$, up to harmless constants.  Consequently,
\begin{Eq*}
|Z^{\leq 1}\partial_r\phi_k|
\lesssim
|\partial_r Z^{\leq 1}\phi_k|
\lesssim
|\kl<r\kr>(\tpartial,r^{-1})Z^{\leq 1}\phi_k|.
\end{Eq*}

We now turn to $I_{53}$. Since $p_3<\frac{n}{n-2}$, we similarly have
\begin{Eq*}
I_{53}\lesssim&
\kl\|r^{-\frac{p_3}{2}+\delta_0}\kl<r\kr>^{-\delta_0-\delta_1-1}
\sinh^{\frac{(p_3-1)(n-1)}{2}}r\, Z^{\leq 1}F_{p_3}(\partial_r\phi_k)\kr\|_{L^2L^2}\\
&\qquad\cdot
\kl\|r^{\frac{p_3+1}{2}-2\delta_0}\kl<r\kr>^{2\delta_0+2\delta_1+1}
\sinh^{\frac{(p_3-1)(1-n)}{2}}r\kr\|_{L_r^\infty}\\
\lesssim&
\kl\|r^{-\frac{1}{2}}\sinh^{\frac{n-1}{2}}r\, \partial_r\phi_k\kr\|_{L^\infty L^\infty}^{p_3-1}
\kl\|r^{-\frac{1}{2}+\delta_0}\kl<r\kr>^{-\delta_0-\delta_1-1}Z^{\leq 1}\partial_r\phi_k\kr\|_{L^2L^2}\\
\lesssim&
\kl\|r^{-\frac{1}{2}}\sinh^{\frac{n-1}{2}}r\, Z\phi_k\kr\|_{L^\infty L^\infty}^{p_3-1}
\kl\|r^{-\frac{1}{2}+\delta_0}\kl<r\kr>^{-\delta_0-\delta_1}(\tpartial,r^{-1})Z^{\leq 1}\phi_k\kr\|_{L^2L^2}\\
\lesssim& \varE_k^{p_3}
\lesssim \varepsilon^{p_3-1}\varE_k.
\end{Eq*}

\subsection{Completion of the iteration argument}
We next estimate the commutator term $I_{54}$. We first restrict ourselves to the region $\{r>1\}$. By \eqref{Eq:hoP2}, for any radial function $f(t,r)$ we have
\begin{Eq*}
\kl|[Z,P]f\kr|
=&\kl|[Z,\partial_\alpha(\gamma^{\alpha\beta}\partial_\beta)+2w\gamma^{r\beta}\partial_\beta+G^\alpha\partial_\alpha+V]f\kr|\\
\lesssim&
\kl|Z\partial_\alpha\gamma^{\alpha\beta}\cdot\partial_\beta f\kr|
+\kl|Z\gamma^{\alpha\beta}\cdot\partial_\alpha\partial_\beta f\kr|\\
&+|Z(2w\gamma^{r\beta}+G^\beta)\cdot\partial_\beta f|
+|ZV\cdot f|\\
\lesssim&
\kl|\kl(\partial_{t,r}^{\leq 2}\gamma,\partial_{t,r}^{\leq 1}G,\partial_{t,r}V\kr)\kr|
\cdot
\kl|\partial_{t,r}^{\leq 1}Z^{\leq 1}f\kr|\\
\lesssim& \varepsilon'(r)r^{-2}|\tpartial^{\leq 1}Z^{\leq 1}f|.
\end{Eq*}
It follows that
\begin{Eq*}
I_{54(r>1)}
\lesssim&
\kl\|\varepsilon'(r)r^{-2}\tpartial^{\leq 1}Z^{\leq 1}\phi_{k+1}\kr\|_{LE^*(r>1)}\\
\lesssim&
\sum_{j>0}\varepsilon'_j
\kl\|\kl(r^{-\frac{1}{2}}\tpartial Z^{\leq 1}\phi_{k+1},\, r^{-\frac{3}{2}}Z^{\leq 1}\phi_{k+1}\kr)\kr\|_{L^2L^2(\rR\times A_j)}\\
\lesssim&
\|Z^{\leq 1}\phi_{k+1}\|_{LE}\sum_{j>0}\varepsilon'_j
\lesssim \varepsilon'\varE_{k+1}.
\end{Eq*}

For the region $\{r<1\}$, the local regularity of $P$ near the origin and the smallness of the coefficients imply that $I_{54(r<1)}$ satisfies the same bound. Therefore,
$I_{54}\lesssim \varepsilon'\varE_{k+1}$.

Combining the above bounds for $I_{51}$, $I_{52}$, $I_{53}$, and $I_{54}$, we obtain
\begin{Eq*}
\varE_{k+1}\leq C\kl(\varepsilon+\varepsilon^{\min\{p_1,p_2,p_3\}-1}\varE_k+\varepsilon'\varE_{k+1}\kr).
\end{Eq*}

Hence, if
$
\varE_k\leq 2C\varepsilon$
and
$
\max\{\varepsilon^{\min\{p_1,p_2,p_3\}-1},\,\varepsilon'\}\leq (4C)^{-1}$,
then
$
\varE_{k+1}\leq 2C\varepsilon$.
By induction, it follows that 
$\varE_k\lesssim \varepsilon$ uniformly in $k$.

Finally, by repeating the argument in the previous two subsections, and arguing exactly as in \Sn{Sn:car}, we obtain $
\varF_{k+1}\leq \frac12 \varF_k$.
We omit the details, since the proof is entirely analogous.

Therefore, the sequence $\{\phi_k\}_{k\geq 0}$ converges to a global radial finite-energy solution $\phi$ of \eqref{Eq:P2}.
Moreover, by the uniform bound for $\varE_k$ and the radial equivalence between $Z$-derivatives and $(\partial_t,\partial_r)$-derivatives, the limit $\phi$ satisfies
\begin{Eq*}
\|r^{-\frac{1}{2}}(\sinh r)^{\frac{n-1}{2}}\partial_{t,r}^{\leq 1}\phi\|_{L^\infty L^\infty(S_0^\infty)}
+\|\partial_{t,r}^{\leq 1}\phi\|_{[E\cap LE](S_0^\infty)}
\lesssim \varepsilon.
\end{Eq*}
This completes the proof of \Tm{Tm:P2}.

\subsection{Proof of Claim~(\ref{Eq:coi2})}\label{Sn:pocoi2}
In this subsection, we prove Claim \eqref{Eq:coi2}. 
Without loss of generality, we omit the subscript $k$ throughout this subsection.

For $n\geq 3$, we first note that
\begin{Eq*}
\partial_r(\sinh^{n-2}r\cosh r)
=(n-2)\sinh^{n-3}r\cosh^2 r+\sinh^{n-1}r
\approx \sinh^{n-3}r\cosh^2 r.
\end{Eq*}
Using \eqref{Eq:ki2}, for any $f(r)$ we then obtain
\begin{Eq}\label{Eq:hti2}
\|wf\|_{L^2}^2
\lesssim&
\int_0^\infty f^2\,\partial_r(\sinh^{n-2}r\cosh r)\d r\\
\lesssim&
\int_0^\infty (\partial_r f)^2
\frac{\sinh^{2n-4}r\cosh^2 r}
{\partial_r(\sinh^{n-2}r\cosh r)}
\d r\\
\approx&
\int_0^\infty (\partial_r f)^2\sinh^{n-1}r\d r
=\|\partial_r f\|_{L^2}^2,
\end{Eq}
and thus $\|\tpartial_rf\|_{L^2}\lesssim\|\partial_r f\|_{L^2}$. Therefore, we readily obtain 
\begin{Eq*}
\|\tpartial Z^{\leq 1}\phi(0)\|_{L^2}
\lesssim
\|\partial_t^2\phi(0)\|_{L^2}
+\varepsilon\kl(
\|\partial_r^{\leq 1}\psi_1\|_{L^2}
+\|\partial_r^{\leq 2}\psi_0\|_{L^2}
\kr).
\end{Eq*}

Meanwhile, by the equation \eqref{Eq:P2}, we have
\begin{Eq*}
\partial_t^2\phi(0)
=&-(1+\gamma^{tt})^{-1}_{t=0}\kl(I_{61}+I_{62}+I_{63}+I_{64}\kr).
\end{Eq*}
where
\begin{Eq*}
I_{61}=&F_{p_1}(\varepsilon\psi_0),\qquad
I_{62}=F_{p_2}(\varepsilon\psi_1),\qquad
I_{63}=F_{p_3}(\varepsilon\partial_r\psi_0),\\
I_{64}=&-\kl((1+\gamma^{tt})\partial_t^2+P\kr)\phi(0).
\end{Eq*}
Note that $I_{64}$ contains no second-order time derivative of $\phi$, and hence depends only on the initial data $(\psi_0,\psi_1)$.

We temporarily postpone the estimate of $I_{61}$ and begin with $I_{62}$ instead. By \eqref{Eq:hti2} and \eqref{Eq:coid2}, and using the fact that $1<p_2<\frac{n}{n-2}$, we have
\begin{Eq*}
\|I_{62}\|_{L^2}
\lesssim&
\varepsilon^{p_2}\kl\||\psi_1|^{p_2}\kr\|_{L^2}\\
\lesssim&
\varepsilon^{p_2}\kl\|r^{-1}\psi_1\kr\|_{L^2}
\kl\|r^{-\frac{1}{2}}\sinh^{\frac{n-1}{2}}r\,\psi_1\kr\|_{L^\infty}^{p_2-1}\kl\|r^{\frac{p_2+1}{2}}\sinh^{-\frac{n-1}{2}(p_2-1)}r\kr\|_{L_r^\infty}\\
\lesssim&\varepsilon^{p_2}\cdot \|\tpartial_r\psi_1\|_{L^2}^{p_2}\cdot 1\lesssim \varepsilon^{p_2}\cdot \|\partial_r^{\leq 1}\psi_1\|_{L^2}^{p_2}\lesssim \varepsilon^{p_2}.
\end{Eq*}
Similarly, applying the same argument, we obtain $\|I_{63}\|_{L^2}\lesssim \varepsilon^{p_3}$.

For $I_{61}$, the main difference is that here we allow
\begin{Eq*}
\frac{n}{n-2}<p_1<\frac{n+2}{n-2}.
\end{Eq*}
Therefore, in order to avoid the singular behavior near $r=0$,
we use \eqref{Eq:ki2} and \eqref{Eq:hti2} again to obtain
\begin{Eq*}
\sup_r\kl|\psi_0^2(r)(1+r)\kr|
\lesssim&\int_0^{+\infty}|\partial_r \psi_0|^2(1+r)^2\d r\\
\lesssim&
\int_0^\infty |\partial_r^{\leq 1} \psi_0|^2
\sinh^{n-3}r\cosh^2r
\d r\\
\approx&\|w\partial_r^{\leq 1} \psi_0\|_{L^2}^2
\lesssim\|\partial_r^{\leq 2} \psi_0\|_{L^2}^2,
\end{Eq*}
Then, $\|\psi_0\|_{L^\infty}\lesssim \|\partial_r^{\leq 2}\psi_0\|_{L^2}\lesssim 1$ and thus
\begin{Eq*}
\|I_{61}\|_{L^2}
\lesssim
\varepsilon^{p_1}\kl\||\psi_0|^{p_1}\kr\|_{L^2}
\lesssim
\varepsilon^{p_1}\kl\|\psi_0\kr\|_{L^2}
\kl\|\psi_0\kr\|_{L^\infty}^{p_1-1}
\lesssim\varepsilon^{p_1}
\end{Eq*}

It remains to estimate $I_{64}$. Following \eqref{Eq:hoP2}, since the coefficients of $(\partial_r\psi_1,\partial_r^2\psi_0)$ are uniformly bounded, while the coefficients of $(\psi_1,\partial_r^{\leq 1}\psi_0)$ are all controlled by $w$, we obtain
\begin{Eq*}
\|I_{64}\|_{L^2}
\lesssim
\varepsilon\kl(
\|(\partial_r\psi_1,\partial_r^2\psi_0)\|_{L^2}
+\|w(\psi_1,\partial_r^{\leq 1}\psi_0)\|_{L^2}
\kr)
\lesssim
\varepsilon.
\end{Eq*}

Finally, since $|\gamma^{tt}|\leq \varepsilon'\ll 1$, we infer that
\begin{Eq*}
\|\tpartial Z^{\leq 1}\phi(0)\|_{L^2}
\lesssim
\varepsilon+\varepsilon^{p_1}+\varepsilon^{p_2}+\varepsilon^{p_3}
\lesssim
\varepsilon,
\end{Eq*}
which proves \eqref{Eq:coi2}.

\section{Appendix: local energy estimates implied by the resolvent estimates}\label{Sn:A}

In this appendix, we explain how a sharp high-frequency weighted resolvent
 estimate together with a low-frequency weighted limiting absorption
principle implies a spectral local energy estimate for the wave equation.
This
also clarifies the relation between the resolvent approach and the
physical-space multiplier estimates used in the main part of the paper.

We begin by setting up the relevant resolvent inputs.  Low-frequency resolvent estimates on asymptotically hyperbolic manifolds go back to
Mazzeo--Melrose \cite{MazzeoMelrose1987}, Mazzeo \cite{mazzeo}, and
Guillarmou \cite{Guillarmou2005}.  High-frequency resolvent estimates have been
studied by Vasy \cite{MR3117526}, Melrose--S\'a Barreto--Vasy
\cite{MR3169792}, Chen--Hassell \cite{MR3473907,MR3805767}, and Wang
\cite{WangSharpAHM}, among others.  These estimates are closely related to
local energy and smoothing estimates; see,
for example, Burq \cite{Burq} and D'Ancona \cite{MR3314497}.

The result below should be understood in the standard resolvent/Kato
smoothing framework.  In contrast to the estimates proved in the main body of
the paper, this method is naturally adapted to a time-independent spatial
operator and to exponentially decaying weights at infinity.  It is therefore
not a substitute for the physical-space multiplier estimates used in the main
part of the paper.

More precisely, let $(X^\circ,g)$ be an $n$-dimensional asymptotically
hyperbolic manifold in the sense of Mazzeo--Melrose, with compactification
denoted by $X$.  We write $L^2(X)$ for $L^2(X^\circ,\d V_g)$.  Let $\lambda$
be a boundary defining function for $X$, chosen as in the discussion of cAHM
in Subsection \ref{sec:cahm}.  If $\lambda$ is geodesic near infinity, then
$\lambda\sim e^{-r}$.  In this appendix only, let $\Delta_g$ denote the
non-negative Laplace--Beltrami operator and set
\begin{Eq*}
\rho:=\frac{n-1}{2},\qquad L:=\Delta_g-\rho^2.
\end{Eq*}
We use the self-adjoint realization of $L$ on $L^2(X)$.
Under the standard asymptotically hyperbolic spectral theory, the essential
spectrum of $L$ is $[0,\infty)$.  The operator $L$ may a priori have discrete
spectrum below $0$; this possibility will be excluded by the spectral
assumption below.  Thus, if $\Delta_{\mathbb H}$ in the main text denotes the
non-positive Laplacian, then the present non-negative $\Delta_g$ equals
$-\Delta_{\mathbb H}$ on the model space, and $L$ corresponds to
$-\Delta_{\mathbb H}-\rho^2$.  We write
\begin{Eq*}
R(z):=\left(L-z^2\right)^{-1}
\end{Eq*}
for the shifted resolvent.  For complex $z$, this notation is understood
through the meromorphic continuation of the asymptotically hyperbolic
resolvent; the threshold point corresponds to $z=0$.  In the estimates below
we take $z=\mu\pm i\varepsilon$, where $\mu\in\R$ and
$0<\varepsilon\leq\varepsilon_0$.  Throughout this appendix, $\lambda$ denotes
the boundary defining function; the spectral parameters are denoted by $\mu$,
$z$, and, when referring to Guillarmou's notation, $\zeta$.

We shall use the following fixed-strip consequence of
\cite[Theorem 1.1]{WangSharpAHM}, rewritten in the shifted notation above.
\begin{lemma}\label{La:Wang-res}
Let $n\ge 3$.
Assume that $(X^\circ,g)$ 
is a Cartan-Hadamard manifold, i.e., a complete simply-connected
Riemannian manifold with non-positive sectional curvature.
  Then, for every $a,b>0$ and every fixed
$0<\varepsilon_0<\min(a,b)$, there exist constants $R$ and $C$, depending on
$a,b$, $\varepsilon_0$, and the geometry, such that, for real $|\mu|\geq R$ and
$0<\varepsilon\leq\varepsilon_0$,
\begin{Eq}\label{Eq:Wang-res}
\|\lambda^aR(\mu\pm i\varepsilon)\lambda^b f\|_{L^2(X)}
\leq
C|\mu\pm i\varepsilon|^{-1}\|f\|_{L^2(X)}.
\end{Eq}
\end{lemma}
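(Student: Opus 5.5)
The plan is to derive the lemma directly from \cite[Theorem 1.1]{WangSharpAHM}. That theorem gives a sharp high-frequency weighted resolvent bound on Cartan--Hadamard asymptotically hyperbolic manifolds. The work consists of translating notation and checking that the bound extends uniformly to a fixed strip $0<\varepsilon\le\varepsilon_0$. I would proceed in three steps.

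\emph{Step 1: match the spectral parameters.} In the unshifted convention one writes $(\Delta_g-\rho^2-\zeta^2)^{-1}$, where $\zeta$ is a spectral parameter near the real axis. With $L=\Delta_g-\rho^2$ this operator is exactly $R(z)$ with $z=\zeta$. So I only need to check the sign conventions: the physical sheet should correspond to $\operatorname{Im}z<0$ or $\operatorname{Im}z>0$, depending on the normalization. Either way, both signs $\pm i\varepsilon$ are covered, by duality and symmetry: $R(\bar z)=R(z)^*$ on the physical sheet. For the continued sheet, Wang's estimate is stated in a strip.

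\emph{Step 2: read off the estimate in the strip.} \cite[Theorem 1.1]{WangSharpAHM} asserts, for the continued resolvent, that
\begin{Eq*}
\|\lambda^a R(z)\lambda^b\|_{L^2\to L^2}\le C|z|^{-1}
\end{Eq*}
for $|\operatorname{Re} z|\ge R$ and $|\operatorname{Im} z|\le \varepsilon_0<\min(a,b)$. The condition $\varepsilon_0<\min(a,b)$ is exactly what makes the weights $\lambda^a\sim e^{-ar}$ and $\lambda^b\sim e^{-br}$ absorb the exponential growth $e^{\varepsilon r}$ of the continued resolvent kernel. I will restrict this bound to $z=\mu\pm i\varepsilon$. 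On this set $|z|\approx|\mu|$, so $|\mu\pm i\varepsilon|^{-1}$ and $|\mu|^{-1}$ are interchangeable up to constants. This gives \eqref{Eq:Wang-res}.

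\emph{Step 3 (main obstacle): confirm the hypotheses of Wang's theorem.} These are the Cartan--Hadamard condition (nontrapping together with the absence of conjugate points) and $n\ge3$. One must also check uniformity of the constant as $\varepsilon\to0^+$ from both sides. If the cited theorem is stated only for real $\mu$, I would instead use a perturbation argument. Write
\begin{Eq*}
R(\mu\pm i\varepsilon)=R(\mu)\bigl(1+(\pm2i\mu\varepsilon-\varepsilon^2)R(\mu\pm i\varepsilon)\bigr),
\end{Eq*}
and combine this with a semiclassical propagation bound on weighted spaces. This route is delicate, because the factor $\mu\varepsilon$ is not small. For that reason I would rely on the strip version of the estimate rather than attempt this perturbation argument.
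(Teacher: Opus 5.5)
Your proposal matches the paper: the paper gives no independent argument and simply reads the lemma off \cite[Theorem 1.1]{WangSharpAHM}, rewritten in the shifted variable and restricted to a fixed strip $|\operatorname{Im} z|\le\varepsilon_0$, with $\varepsilon_0<\min(a,b)$ being the transcription of Wang's weight condition. One minor correction: $(\Delta_g-\rho^2-\zeta^2)^{-1}$ is already the shifted convention, whereas the unshifted form is $(\Delta_g-s(n-1-s))^{-1}$, which reduces to it under $s=\rho+i\zeta$, so your translation is nonetheless correct.
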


In the sequel we only use this estimate in a fixed strip near the real axis.
The restriction $\varepsilon_0<\min(a,b)$ is the present notation for
the weight condition in \cite[Theorem 1.1]{WangSharpAHM}, after restricting
the spectral parameter to the strip $|\operatorname{Im}z|\leq\varepsilon_0$.
When applying the lemma below with $b=a$, we choose the strip width smaller
than $a$.

The preceding lemma gives the high-frequency input.  To obtain a full spacetime
 estimate, one must also control the low-frequency spectral region.
We next recall a weighted limiting absorption principle in the low-frequency regime, 
obtained from the meromorphic resolvent theory of Mazzeo--Melrose and Guillarmou.
We use the following form of
\cite[Theorem 1.1 and Proposition 1.3]{Guillarmou2005}, rewritten in the
present shifted parameter.
\begin{lemma}\label{La:compact-LAP}
Assume that $(X^\circ,g)$ is a conformally compact asymptotically hyperbolic
manifold as in Subsection \ref{sec:cahm}, and that the metric is even to
infinite order in the sense of \cite[Definition 1.2]{Guillarmou2005}.  Assume
moreover that the meromorphically continued resolvent has no pole at the
threshold point; in the present variable this is $z=0$, equivalently
$\zeta=\rho$ in Guillarmou's notation.  Then, for every $M>0$ and every
$a>0$, there exist constants $\varepsilon_0>0$ and $C$ such that
\begin{Eq}\label{Eq:appendix-compact-lap}
\|\lambda^a R(\mu\pm i\varepsilon)\lambda^a f\|_{L^2(X)}
\leq
C\|f\|_{L^2(X)}
\end{Eq}
for $0\leq \mu\leq M$ and $0<\varepsilon\leq\varepsilon_0$.  Here
$R(\mu\pm i\varepsilon)$ denotes $(L-(\mu\pm i\varepsilon)^2)^{-1}$ in the
meromorphically continued sense.
\end{lemma}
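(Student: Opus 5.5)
The plan is to obtain the lemma by translating Guillarmou's meromorphic continuation into the shifted variable $z$ and then using compactness of the low-frequency parameter set.

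\emph{Change of parameter.} Guillarmou writes the resolvent as $R_g(\zeta)=(\Delta_g-\zeta(n-1-\zeta))^{-1}$, which is holomorphic on $\operatorname{Re}\zeta>\rho$ apart from finitely many $L^2$-eigenvalue poles. Since $\zeta(2\rho-\zeta)=\rho^2-(\zeta-\rho)^2$, setting $\zeta=\rho-iz$ gives $\zeta(n-1-\zeta)=\rho^2+z^2$, and hence $R(z)=R_g(\rho-iz)$. With $z=\mu\pm i\varepsilon$ this means $\operatorname{Re}\zeta=\rho\pm\varepsilon$. So the sign $+$ lies on the physical sheet, and the sign $-$ lies in the continued region $\operatorname{Re}\zeta\in[\rho-\varepsilon_0,\rho)$. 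The set $\{0\le\mu\le M,\ |\operatorname{Im} z|\le\varepsilon_0\}$ maps to a compact rectangle $K_{\varepsilon_0}$ in the $\zeta$-plane, centred on the segment $\{\rho-i\mu:0\le\mu\le M\}$ of the critical line.

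\emph{Weighted continuation.} For metrics even to infinite order, \cite[Theorem 1.1]{Guillarmou2005} shows that $R_g(\zeta)$ continues meromorphically to $\mathbb C$. The accompanying weighted statement says that $\lambda^{N}R_g(\zeta)\lambda^{N}$ is a meromorphic family of bounded operators on $L^2$ in the half-plane $\operatorname{Re}\zeta>\rho-N$, for $N>0$ not necessarily an integer. Applying this with $N=a$ and choosing $\varepsilon_0<a$ places $K_{\varepsilon_0}$ inside that half-plane. Therefore $\zeta\mapsto\lambda^aR_g(\zeta)\lambda^a$ is a meromorphic $\mathcal L(L^2)$-valued family on a neighborhood of $K_{\varepsilon_0}$.

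\emph{Excluding poles near the segment.} Next I must show there are no poles on the segment itself.
\begin{itemize}
\item At $\mu=0$, i.e.\ $\zeta=\rho$, this is exactly the hypothesis of the lemma.
\item For $\mu\neq0$, the points lie on the critical line $\operatorname{Re}\zeta=\rho$, $\zeta\ne\rho$. Here I invoke \cite[Proposition 1.3]{Guillarmou2005}, together with the classical results of Mazzeo--Melrose \cite{MazzeoMelrose1987} and Mazzeo \cite{mazzeo}. These say there are no embedded eigenvalues in $(\rho^2,\infty)$ and no resonances on the critical line away from $\rho$. The underlying argument is a boundary-pairing/unique-continuation one: a pole there would yield a nontrivial generalized eigenfunction whose leading boundary coefficient vanishes, which forces it to vanish identically.
\end{itemize}
Poles are discrete, so the compact segment has an open neighborhood free of poles. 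Shrinking $\varepsilon_0$ further if necessary, $K_{\varepsilon_0}$ lies in this pole-free neighborhood. The operator family is then holomorphic, hence norm-continuous, on the compact set $K_{\varepsilon_0}$, so its norm is bounded by some constant $C$. This gives \eqref{Eq:appendix-compact-lap} for $0\le\mu\le M$ and $0<\varepsilon\le\varepsilon_0$, with either sign.

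\emph{Main obstacle.} I expect the main difficulty to be the bookkeeping in the weighted continuation step. Guillarmou's mapping properties are usually phrased between $\lambda^{N}L^2$ and $\lambda^{-N}L^2$, or through polyhomogeneous expansions. Extracting the precise statement that the weights $\lambda^a$ on both sides, with $a>\varepsilon_0$, suffice to make the continued resolvent bounded on $L^2$ in the strip requires care. I would handle this through the parametrix $R_g(\zeta)=M(\zeta)(I+K(\zeta))^{-1}$ and track $\lambda$-weights on each term. The one-sided pole exclusion on the critical line is standard but must be quoted in the form matching the shifted parameter.
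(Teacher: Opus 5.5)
Your proposal is correct and follows essentially the same route as the paper: rewrite Guillarmou's weighted meromorphic continuation in the $z$-variable, exclude poles on the critical segment using the threshold hypothesis together with Mazzeo's absence of embedded eigenvalues and the Patterson--Perry type argument, and conclude by discreteness of poles plus compactness. The differences are only cosmetic. Your substitution $\zeta=\rho-iz$ (instead of the paper's $\zeta=\rho+iz$) changes which sign in $\mu\pm i\varepsilon$ lies on the physical sheet, which is harmless because the pole exclusion holds on all of $\operatorname{Re}\zeta=\rho$ except $\zeta=\rho$; and you state explicitly the constraint $\varepsilon_0<a$ that the paper leaves implicit when it chooses $\varepsilon_0$ small.
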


\begin{proof}
Guillarmou \cite{Guillarmou2005} writes the modified resolvent as
\begin{Eq*}
(\Delta_g-\zeta(n-1-\zeta))^{-1}.
\end{Eq*}
With our interior dimension denoted by $n$, the change of variables
$\zeta=\rho+iz$, $\rho=(n-1)/2$, gives
\begin{Eq*}
\Delta_g-\zeta(n-1-\zeta)=L-z^2.
\end{Eq*}
Thus Guillarmou's modified resolvent is exactly the present convention
$R(z)=(L-z^2)^{-1}$.  In references where the boundary dimension is denoted
by $n$, the same formula is written with $\zeta(n-\zeta)$.  By
\cite[Proposition 1.3 and Theorem 1.4]{Guillarmou2005}, the evenness
assumption gives, for every $a>0$, a finite-meromorphic continuation of the
modified resolvent with the weighted mapping property
\begin{Eq*}
\lambda^a(L-z^2)^{-1}\lambda^a\in \mathcal B(L^2(X),L^2(X))
\end{Eq*}
away from possible finite-rank poles.  This is the point at which the evenness
hypothesis is used.  Guillarmou \cite[p.~14]{Guillarmou2005}, referring to
Patterson--Perry \cite[Lemma~4.9]{PattersonPerry2001},
recalls that this argument, together with Mazzeo's absence of embedded eigenvalues \cite{mazzeo}, gives no
pole on the line $\operatorname{Re}\zeta=\rho$ except possibly at the
threshold point $\zeta=\rho$.  The remaining threshold point is excluded by the
no-threshold-pole assumption, since $z=0$ corresponds to $\zeta=\rho$.
Hence the weighted resolvent is holomorphic near every point of the compact
segment $\{\zeta=\rho+i\mu:0\leq\mu\leq M\}$.  Since the poles are discrete,
this compact segment has a pole-free neighborhood.  Choosing
$\varepsilon_0>0$ sufficiently small so that
$\zeta=\rho+i(\mu\pm i\varepsilon)$ remains in this neighborhood for
$0\leq\mu\leq M$ and $0\leq\varepsilon\leq\varepsilon_0$, local boundedness and a
finite covering give
\eqref{Eq:appendix-compact-lap}.
\end{proof}

Combining the high-frequency input in \La{La:Wang-res} with the
low-frequency input in \La{La:compact-LAP}, we obtain the following weighted
half-wave resolvent estimate.  Its proof is postponed until after the proof of
\Tm{Tm:appendix-kato-le}.
\begin{lemma}\label{La:half-wave-res}
Assume that the assumptions of \La{La:Wang-res} and \La{La:compact-LAP} hold,
and assume in addition that $L$ has no point spectrum.  Since
$\sigma_{\mathrm{ess}}(L)=[0,\infty)$, the
self-adjoint operator $L$ is non-negative.  Let $H:=L^{1/2}$ be its
non-negative square root.  Then, for every $a>0$,
\begin{Eq}\label{Eq:appendix-half-wave-res}
\sup_{z\in\C\setminus\R}
\left\|
\lambda^a(H-z)^{-1}\lambda^a
\right\|_{L^2(X)\to L^2(X)}
<\infty.
\end{Eq}
\end{lemma}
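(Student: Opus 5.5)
The plan is to reduce \eqref{Eq:appendix-half-wave-res} to the weighted resolvent bounds for $R(z)=(L-z^2)^{-1}=(H^2-z^2)^{-1}$. The reduction uses the algebraic identity
\begin{Eq*}
(H-z)^{-1}=(H+z)(H^2-z^2)^{-1}=(H+z)R(z),
\end{Eq*}
valid for $z\in\C\setminus\R$ because $H\ge0$ is self-adjoint. By symmetry under $z\mapsto\bar z$ (taking adjoints), it suffices to treat $\operatorname{Im}z>0$. We split the region into three pieces.
\begin{itemize}
\item[(i)] $\operatorname{Im}z\ge\varepsilon_0$, where $\varepsilon_0$ is chosen smaller than $a$ and admissible for both lemmas.
\item[(ii)] $0<\operatorname{Im}z<\varepsilon_0$ and $|\operatorname{Re}z|\le M$.
\item[(iii)] $0<\operatorname{Im}z<\varepsilon_0$ and $|\operatorname{Re}z|\ge M$, with $M\ge R$ taken from \La{La:Wang-res}.
\end{itemize}
Region (i) is trivial. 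There $\|(H-z)^{-1}\|\le(\operatorname{Im}z)^{-1}\le\varepsilon_0^{-1}$ by the spectral theorem, and $\lambda$ is bounded.

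For regions (ii) and (iii) we avoid the unbounded factor $H+z$ by a partial fraction:
\begin{Eq*}
(H-z)^{-1}=2z\,R(z)-(H+z)^{-1}.
\end{Eq*}
This holds because $(H-z)^{-1}-(H+z)^{-1}=2zR(z)$ and $(H-z)^{-1}+(H+z)^{-1}=2H R(z)$, so
\begin{Eq*}
(H-z)^{-1}=\tfrac12\big[(H-z)^{-1}-(H+z)^{-1}\big]+\tfrac12\big[(H-z)^{-1}+(H+z)^{-1}\big]
=zR(z)+HR(z).
\end{Eq*}
Since $HR(z)=\tfrac12[(H-z)^{-1}+(H+z)^{-1}]$, we get $(H-z)^{-1}=2zR(z)-(H+z)^{-1}$. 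For $\operatorname{Re}z\ge0$ and $\operatorname{Im}z>0$, the point $-z$ lies in the left half-plane and $H\ge0$, so $\operatorname{dist}(-z,\sigma(H))\ge|z|\ge\operatorname{Re}z$. In region (iii) this gives $\|(H+z)^{-1}\|\le M^{-1}$, and \La{La:Wang-res} gives
\begin{Eq*}
\|\lambda^a 2zR(z)\lambda^a\|\le 2C|z|\,|z|^{-1}=2C.
\end{Eq*}
For $\operatorname{Re}z<0$ we use instead $(H-z)^{-1}$ with $\operatorname{dist}(z,\sigma(H))\ge|z|$, which is bounded directly. One has to note that $R(z)$ is even in $z$, so \La{La:Wang-res} with $\mu\le -R$ covers this case as well. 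In region (ii) with $\operatorname{Re}z\ge0$, \La{La:compact-LAP} gives $\|\lambda^aR(z)\lambda^a\|\le C$, hence $\|\lambda^a2zR(z)\lambda^a\|\le 2C(M+\varepsilon_0)$. The remaining term $\lambda^a(H+z)^{-1}\lambda^a$ needs care near $z=0$, since there $\|(H+z)^{-1}\|$ is not uniformly bounded.

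The main obstacle is exactly this threshold region: $z$ near $0$, where both $(H+z)^{-1}$ and $(H-z)^{-1}$ individually blow up like $|z|^{-1}$. The plan is to handle it by a weighted argument rather than the norm bound. We write
\begin{Eq*}
(H+z)^{-1}=(H-(-z))^{-1},
\end{Eq*}
with $-z$ in the lower half-plane near the real axis, so it is the same kind of object as the quantity being estimated. Adding the two partial-fraction identities gives
\begin{Eq*}
\lambda^a\big[(H-z)^{-1}+(H+z)^{-1}\big]\lambda^a=2\lambda^aHR(z)\lambda^a.
\end{Eq*}
The resulting plan is as follows. For $|z|\le\delta$, express $\lambda^a(H-z)^{-1}\lambda^a$ through the Stone formula and the spectral measure
\begin{Eq*}
\d E_H(\mu)=\frac{2\mu}{\pi}\operatorname{Im}R(\mu+i0)\,\d\mu .
\end{Eq*}
By \La{La:compact-LAP} and continuity up to the real axis, which follows from holomorphy of the weighted resolvent near $z=0$, the weighted density $\lambda^a\,\d E_H/\d\mu\,\lambda^a=O(\mu)$ is Lipschitz and vanishes at $\mu=0$. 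Hence the Cauchy integral $\int(\mu-z)^{-1}\lambda^a\d E_H\lambda^a$ stays bounded as $z\to0$. Away from $\mu\le2\delta$, the spectral cutoff piece is bounded by the norm bound on $(H-z)^{-1}\chi(H)$. The absence of point spectrum guarantees that the spectral measure has no atom at $0$, which this representation requires. Combining the three regions yields \eqref{Eq:appendix-half-wave-res}.
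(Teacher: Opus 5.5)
Your proof is correct, with one harmless slip: the partial-fraction identity is $(H-z)^{-1}=2zR(z)+(H+z)^{-1}$, not with a minus sign. This does not matter, since you bound the two terms separately. Away from $z=0$ your argument essentially matches the paper's: the trivial bound for $|\operatorname{Im}z|\ge\varepsilon_0$, and, for $\operatorname{Re}z>0$, this identity combined with Lemma~\ref{La:Wang-res} at high frequency and Lemma~\ref{La:compact-LAP} at bounded frequency.

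The genuine difference is in the threshold region. You estimate the Cauchy integral $\int_0^{2\delta}(\nu-z)^{-1}\lambda^a\,dE_H(\nu)\,\lambda^a$ directly for every $|z|\le\delta$. For $\operatorname{Re}z>0$ the size bound $\|\lambda^a\frac{dE_H}{d\nu}\lambda^a\|\lesssim\nu$ alone cannot do this, since it only yields something of order $\operatorname{Re}z\,\log(1/\operatorname{Im}z)$. You therefore need Lipschitz regularity of the weighted density and a Plemelj--Privalov argument: subtract the density at $\operatorname{Re}z$, and use that this value is $O(|z|)$ to absorb the logarithm. That regularity comes from holomorphy of the continued weighted resolvent in a full neighbourhood of $z=0$. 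This is the no-threshold-pole hypothesis itself, not the uniform bound that Lemma~\ref{La:compact-LAP} actually states.

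The paper avoids any regularity. Stone's formula plus the uniform bound give only $\|\lambda^a\frac{dE_H}{d\nu}\lambda^a\|\lesssim\nu$. This suffices whenever $\operatorname{Re}z\le0$, because then $|\nu-z|\ge\nu$ on $[0,\nu_0]$ and $\int_0^{\nu_0}\nu/|\nu-z|\,d\nu\le\nu_0$. For $\operatorname{Re}z>0$ near $0$, the same identity writes $(H-z)^{-1}$ as $(H-(-z))^{-1}$ plus $2zR(z)$. The first term is already controlled because $\operatorname{Re}(-z)<0$, and the second is $O(|z|)$ in the weighted norm. You wrote down exactly this reduction (``$-z$ \dots\ is the same kind of object'') but did not use that $-z$ lies on the harmless side of the spectrum.

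So the paper's route is more economical, needing only uniform weighted resolvent bounds. Yours handles the whole threshold neighbourhood at once, at the cost of a stronger analytic input.
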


For the reader's convenience, we collect the assumptions used in the final
estimate.
\begin{hypothesis}\label{Hs:appendix-resolvent}
We assume that $n\geq 3$ and that $(X^\circ,g)$ is a conformally compact
asymptotically hyperbolic manifold as in Subsection \ref{sec:cahm}, even to
infinite order in the sense
of \cite[Definition 1.2]{Guillarmou2005}, and also satisfies the high-frequency
geometric hypotheses of \La{La:Wang-res}, namely being simply connected and
having non-positive sectional curvatur(Cartan-Hadamard manifold).  Assume moreover that the
meromorphically continued resolvent has no pole at the threshold point $z=0$
and that $L$ has no point spectrum.
\end{hypothesis}

Under \Hs{Hs:appendix-resolvent}, the weighted high-frequency resolvent
estimate in \La{La:Wang-res}, the low-frequency limiting absorption
principle in \La{La:compact-LAP}, and therefore the weighted half-wave resolvent
estimate in \La{La:half-wave-res} hold.

\begin{theorem}[Spectral local energy estimate]
\label{Tm:appendix-kato-le}
Assume \Hs{Hs:appendix-resolvent}.  Let $I=[0,T)$ with
$0<T\leq\infty$, and fix $a>0$.
Let $\psi_0\in D(L^{1/2})$, $\psi_1\in L^2(X)$, and assume that
$\lambda^{-a}F\in L_t^2(I;L^2(X))$.  Let $\phi$ be the solution of
\begin{Eq}\label{Eq:appendix-wave}
\begin{cases}
(\partial_t^2+L)\phi=F,\\
\phi(0)=\psi_0,\qquad \partial_t\phi(0)=\psi_1.
\end{cases}
\end{Eq}
Then
\begin{Eq}\label{Eq:appendix-le}
&\|\lambda^a\partial_t\phi\|_{L_t^2(I;L^2(X))}
+
\|\lambda^aL^{1/2}\phi\|_{L_t^2(I;L^2(X))} \\
&\qquad\lesssim
\|L^{1/2}\psi_0\|_{L^2}
+
\|\psi_1\|_{L^2}
+
\|\lambda^{-a}F\|_{L_t^2(I;L^2(X))}.
\end{Eq}
The implicit constant may depend on $a$ and on the resolvent constants and
geometry, but is independent of the length of $I$, $\psi_0$, $\psi_1$, and
$F$.
\end{theorem}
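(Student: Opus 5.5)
We plan to run the standard Kato smoothing argument, taking the weighted half-wave resolvent bound of \La{La:half-wave-res} as given. Let $H=L^{1/2}$. Since $L$ is non-negative, we factor $\partial_t^2+L=(\partial_t-iH)(\partial_t+iH)$ and pass to the half-wave variables
\begin{Eq*}
u_\pm:=\partial_t\phi\pm iH\phi .
\end{Eq*}
Then $\partial_t u_\pm\mp iHu_\pm=F$, with $u_\pm(0)=\psi_1\pm iH\psi_0$. Both $\|\lambda^a\partial_t\phi\|$ and $\|\lambda^aH\phi\|$ are bounded by $\|\lambda^a u_+\|+\|\lambda^a u_-\|$, and $\|u_\pm(0)\|_{L^2}\le\|L^{1/2}\psi_0\|_{L^2}+\|\psi_1\|_{L^2}$. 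So it suffices to treat one first-order equation $\partial_t u-iHu=F$, where $H$ is self-adjoint and $\lambda^a$ is a bounded, positive multiplication operator.

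\textbf{Homogeneous part.} Write $A:=\lambda^a$. By Kato's theorem, the resolvent bound
\begin{Eq*}
\sup_{z\notin\R}\|A(H-z)^{-1}A\|<\infty
\end{Eq*}
from \La{La:half-wave-res} implies that $A$ is $H$-smooth. Concretely, by Stone's formula,
\begin{Eq*}
\sup_{\mu\in\R,\,\varepsilon>0}\varepsilon\|A(H-\mu-i\varepsilon)^{-1}g\|^2
=\sup\,\mathrm{Im}\,\langle (H-\mu-i\varepsilon)^{-1}Ag,Ag\rangle\lesssim\|g\|^2 .
\end{Eq*}
Now take a Fourier–Laplace transform in $t$ of $e^{-\varepsilon t}Ae^{itH}u_0$ and apply Plancherel. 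This gives
\begin{Eq*}
\int_0^\infty\|Ae^{itH}u_0\|^2\,\d t\le C\|u_0\|^2 ,
\end{Eq*}
uniformly in $\varepsilon$, after letting $\varepsilon\to0$; the same holds for $e^{-itH}$. By duality, this also gives $\|\int_0^\infty e^{-isH}AG(s)\,\d s\|_{L^2}\lesssim\|G\|_{L^2L^2}$.

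\textbf{Inhomogeneous part.} Set $G=\lambda^{-a}F$ and extend it by zero outside $I$. We must bound the retarded Duhamel term
\begin{Eq*}
A\int_0^t e^{i(t-s)H}AG(s)\,\d s
\end{Eq*}
in $L^2_tL^2$, independently of $T$. First insert the damping factor $e^{-\varepsilon(t-s)}$. The time-Fourier transform turns the convolution into multiplication by $-iA(H-\tau-i\varepsilon)^{-1}A$, up to sign conventions. This is bounded uniformly in $\tau$ and $\varepsilon$ by \La{La:half-wave-res}, so Plancherel yields a bound $\lesssim\|G\|_{L^2L^2}$ uniform in $\varepsilon$. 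Letting $\varepsilon\to0$ by monotone convergence, or via a strong-limit argument on the truncated integrals, removes the damping. This direct resolvent route avoids the Christ–Kiselev lemma, which would fail at the endpoint $L^2\to L^2$. Because $G$ is supported in $I$ and the estimate is global in time, the constant is independent of $T$.

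Collecting the homogeneous and inhomogeneous bounds for $u_\pm$ gives \eqref{Eq:appendix-le}. The main obstacle is justifying the Fourier/Plancherel step rigorously for the retarded operator: we must check that the bounded family $A(H-\tau-i\varepsilon)^{-1}A$ really is the symbol of the damped convolution, for instance on a dense class such as $G\in C_c^\infty(I;L^2)$, and that $\varepsilon\to0$ commutes with the norms. We would handle this by testing against $C_c^\infty$ functions in $t$ and using the spectral theorem for $H$ to write $e^{itH}$ through its spectral measure.
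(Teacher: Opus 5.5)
Your proposal is correct and follows the paper's proof: you pass to the half-wave variables $\partial_t\phi\pm iH\phi$, obtain Kato smoothing for $e^{\pm itH}$ from the uniform weighted resolvent bound of Lemma~\ref{La:half-wave-res}, and apply the retarded supersmoothing estimate with $G=\lambda^{-a}F$ extended by zero. The only difference is that you sketch the retarded estimate yourself via a damped time-Fourier transform and Plancherel, whereas the paper cites it as part of Kato's supersmoothing theorem; in your sketch the damping is removed by Fatou's lemma rather than monotone convergence.
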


\begin{remark}\label{Rk:appendix-limitation}
The estimate \eqref{Eq:appendix-le} is a spectral weighted spacetime energy
estimate, since it involves the nonlocal operator $L^{1/2}$.  On the model
hyperbolic space it should be compared with the usual physical-space local
energy norm through the shifted-gradient structure; on a general
asymptotically hyperbolic manifold it is best viewed as a spectral version of
local energy decay.

For a geodesic choice of $\lambda$ near infinity, the relation
$\lambda\sim e^{-r}$ shows that the
weight in \eqref{Eq:appendix-le} is exponential:
$\lambda^a\sim e^{-ar}$.  The forcing term is correspondingly measured in the
dual exponentially weighted norm
$\|\lambda^{-a}F\|_{L_t^2(I;L^2(X))}$, with
$\lambda^{-a}\sim e^{ar}$ near infinity.  Moreover, the whole argument is
naturally adapted to a fixed time-independent spatial operator, or equivalently
to a static product spacetime $\R_t\times X$.  Thus, although the resolvent
method gives a clean comparison result under the assumptions above, it does not
in its present form cover the class of quasilinear problems treated in the main
part of this paper, where the perturbations may be time-dependent and only
polynomial radial decay is assumed.
\end{remark}

\begin{proof}[Proof of \Tm{Tm:appendix-kato-le}]
Write $H:=L^{1/2}$ by functional calculus.  Under
\Hs{Hs:appendix-resolvent}, \La{La:half-wave-res} gives the uniform resolvent
estimate \eqref{Eq:appendix-half-wave-res}.  This is precisely Kato's
supersmoothness condition for $A=\lambda^a$ with respect to $H$; see
\cite{Kato1966,KatoYajima1989,ReedSimon3}.  Hence Kato's supersmoothing theorem gives
\begin{Eq}\label{Eq:appendix-hom-smoothing}
\|\lambda^a e^{itH}f\|_{L_t^2(I;L^2(X))}
\lesssim
\|f\|_{L^2}.
\end{Eq}
This estimate is obtained first on the full time line and then restricted to
the interval $I$.
We shall also use the standard retarded form of the inhomogeneous
supersmoothing estimate.  This is part of Kato's supersmoothing theorem: if
$A$ is $H$-supersmooth, then
\begin{Eq*}
\left\|
A\int_0^t e^{i(t-s)H}A^*G(s)\,\mathrm{d}s
\right\|_{L_t^2(I;L^2(X))}
\lesssim
\|G\|_{L_t^2(I;L^2(X))}.
\end{Eq*}
This estimate is first obtained on the full time line; the stated estimate on
$I$ follows by extending $G$ by zero and restricting the output to $I$.
Put $G=\lambda^{-a}F\in L_t^2(I;L^2(X))$.  Then $F=\lambda^aG$, and since
$A=\lambda^a$ is self-adjoint, $A^*G=F$.  Extending the forcing term by zero
outside $I$ gives, first for smooth compactly supported $F$ and then by density,
\begin{Eq}\label{Eq:appendix-inhom-smoothing}
&\left\|
\lambda^a\int_0^t e^{i(t-s)H}F(s)\,\mathrm{d}s
\right\|_{L_t^2(I;L^2(X))} \\
&\qquad\lesssim
\|\lambda^{-a}F\|_{L_t^2(I;L^2(X))}.
\end{Eq}

We apply these estimates to the first-order half-wave variables
\begin{Eq*}
\phi_+:=\partial_t\phi+iH\phi,\qquad
\phi_-:=\partial_t\phi-iH\phi.
\end{Eq*}
These are the standard diagonal variables for $\partial_t^2+H^2$.
They satisfy
\begin{Eq*}
(\partial_t-iH)\phi_+=F,\qquad
(\partial_t+iH)\phi_-=F,
\end{Eq*}
with initial data
\begin{Eq*}
\phi_+(0)=\psi_1+iH\psi_0,\qquad
\phi_-(0)=\psi_1-iH\psi_0.
\end{Eq*}
The same supersmoothing bound holds with $H$ replaced by $-H$, since
$\lambda^a(-H-z)^{-1}\lambda^a
=-\lambda^a(H+z)^{-1}\lambda^a$ and the uniform bound for $H$ over
$\C\setminus\R$ is invariant under $z\mapsto -z$.
The same retarded estimate is therefore valid for both half-wave groups
$e^{itH}$ and $e^{-itH}$.
By \eqref{Eq:appendix-hom-smoothing} and
\eqref{Eq:appendix-inhom-smoothing}, applied to $e^{itH}$ and $e^{-itH}$, we
obtain
\begin{Eq*}
\|\lambda^a\phi_\pm\|_{L_t^2(I;L^2(X))}
\lesssim
\|\psi_1\pm iH\psi_0\|_{L^2}
+
\|\lambda^{-a}F\|_{L_t^2(I;L^2(X))}.
\end{Eq*}
Since
\begin{Eq*}
\partial_t\phi=\frac{\phi_++\phi_-}{2},\qquad
H\phi=\frac{\phi_+-\phi_-}{2i},
\end{Eq*}
we conclude that
\begin{Eq*}
&\|\lambda^a\partial_t\phi\|_{L_t^2(I;L^2(X))}\\
&\quad+
\|\lambda^aH\phi\|_{L_t^2(I;L^2(X))}
\lesssim
\|H\psi_0\|_{L^2}
+
\|\psi_1\|_{L^2}
+
\|\lambda^{-a}F\|_{L_t^2(I;L^2(X))}.
\end{Eq*}
This proves \eqref{Eq:appendix-le}.
\end{proof}

\providecommand{\appendixproofpart}[1]{\par\medskip\noindent\textbf{#1.}}

\begin{proof}[Proof of \La{La:half-wave-res}]
All spectral measures and functional calculus below are those of the
self-adjoint operators $L$ and $H=L^{1/2}$.  The weights $\lambda^a$ are only
inserted as bounded multiplication operators on the left and right.  In
particular, no spectral calculus for a conjugated operator such as
$\lambda^aL\lambda^a$ or $\lambda^aH\lambda^a$ is involved.

\appendixproofpart{Part 1: Large imaginary part}
Fix the weight exponent $a>0$ from the statement.  Choose first a strip width
$\delta$ with $0<\delta<a$.  Applying \La{La:Wang-res} with $b=a$ and strip
width $\delta$ gives a high-frequency threshold $R$.  Next choose
$M>R$.  Applying \La{La:compact-LAP} on the low-frequency range
$0\leq \mu\leq M$ gives another strip width, say $\eta>0$.  We then set
\[
\varepsilon_0:=\min(\delta,\eta).
\]
With this common strip width, both the low-frequency and
high-frequency $L$-resolvent estimates are available.  
If
$|\operatorname{Im}z|\geq\varepsilon_0$, the spectral theorem gives
\[
\|(H-z)^{-1}\|_{L^2\to L^2}
\leq
|\operatorname{Im}z|^{-1}
\leq
\varepsilon_0^{-1}.
\]
Together with the $L^2$-boundedness of $\lambda^a$, this proves
\eqref{Eq:appendix-half-wave-res} outside the strip
$|\operatorname{Im}z|\leq\varepsilon_0$.  It remains to prove the uniform
estimate for
\[
z=\mu\pm i\varepsilon,\qquad \mu\in\R,\quad
0<\varepsilon\leq\varepsilon_0.
\]

\appendixproofpart{Part 2: The threshold line}
We first consider $\mu=0$.  For $s>0$, writing $\nu=s^{1/2}$, Stone's formula
gives
\begin{Eq*}
\lambda^a
\frac{\mathrm{d}E_L}{\mathrm{d}s}(s)\lambda^a
=
\lim_{\vep\to 0+}\frac{1}{2\pi i}\lambda^a
\left((L-(s+i\vep))^{-1}-(L-(s-i\vep))^{-1}\right)\lambda^a\ .
\end{Eq*}
in the weighted operator-valued sense.  
Notice that with $\vep\in (0, s/100)$, $\mu\in (\nu/2, 2\nu)$ and $\delta\in (0, \mu/10)$,
we could express $s\pm i\vep=(\mu\pm i\delta)^2$ so that
$(L-(s\pm i\vep))^{-1}=R(\mu\pm i\delta)$.
 Hence, by
\eqref{Eq:appendix-compact-lap} and the absence of point spectrum in the
present lemma, the weighted spectral measure of $L$ is absolutely continuous
on $(0,\nu_0^2]$ for $\nu_0=\min(M/2, 5\vep_0)$, and its sandwiched
operator-valued density satisfies
\begin{Eq*}
\left\|
\lambda^a\frac{\mathrm{d}E_L}{\mathrm{d}s}(s)\lambda^a
\right\|_{L^2\to L^2}
\leq C,
\qquad 0<s\leq \nu_0^2,
\end{Eq*}
with a uniform bound as $s\downarrow0$ by the threshold regularity contained in
\La{La:compact-LAP} and the no-pole assumption at the threshold.  Since
$H=L^{1/2}$, one has
$E_H([0,\nu])=E_L([0,\nu^2])$ and hence
\begin{Eq*}
\frac{\mathrm{d}E_H}{\mathrm{d}\nu}(\nu)
=
2\nu\,\frac{\mathrm{d}E_L}{\mathrm{d}s}(\nu^2).
\end{Eq*}
This identity is understood in the weighted operator-valued sense after
inserting the factors $\lambda^a$.  Therefore
\begin{Eq}\label{Eq:appendix-spectral-H-low}
\left\|
\lambda^a\frac{\mathrm{d}E_H}{\mathrm{d}\nu}(\nu)\lambda^a
\right\|_{L^2\to L^2}
\leq C\nu,
\qquad 0< \nu\leq \nu_0.
\end{Eq}
The factor $\nu$ is precisely what removes the possible half-wave singularity
at the threshold.  More explicitly, for $0<\nu<\nu_0$,
\begin{Eq*}
\lambda^a\,dE_H(\nu)\,\lambda^a
=
\lambda^a\frac{\mathrm dE_H}{\mathrm d\nu}(\nu)\lambda^a\,\mathrm d\nu
\end{Eq*}
in the weighted operator-valued sense, and the density has norm $O(\nu)$.
Since $\lambda^a$ is bounded and self-adjoint,
$\lambda^a\,dE_H(\nu)\,\lambda^a$ is a bounded positive operator-valued
measure.  The density estimate above controls its weighted total variation
near zero by $C\nu\,d\nu$.  Therefore, for
$0<\varepsilon\leq\varepsilon_0$,
\begin{Eq*}
\|\lambda^a(H-(0\pm i\vep))^{-1} \chi_{H\le\nu_0}\lambda^a\|_{L^2\to L^2}
=
\left\|
\int_0^{\nu_0}\frac{1}{\nu\mp i\varepsilon}
\lambda^a\,dE_H(\nu)\,\lambda^a
\right\|_{L^2\to L^2}
\lesssim
\int_0^{\nu_0}\frac{\nu}{|\nu\mp i\varepsilon|}\,d\nu
\lesssim1.
\end{Eq*}

On the other hand, it is trivial that $(H-(0\pm i\vep))^{-1} \chi_{H\ge\nu_0}$ is a bounded operator on $L^2(X)$, with norm bounded by $\nu_0^{-1}$.
Thus the functional calculus gives
\begin{Eq}\label{Eq:threshold-H-res}
\sup_{0<\varepsilon\leq\varepsilon_0}
\left\|
\lambda^a(H-(0\pm i\varepsilon))^{-1}\lambda^a
\right\|_{L^2\to L^2}
<\infty.
\end{Eq}

\appendixproofpart{Part 3: Negative real part}
We next prove the estimate in the region where the exterior half-wave
parameter has negative real part:
\begin{Eq}\label{Eq:H-negative-boundary}
\sup_{\mu<0,\ 0<\varepsilon\leq\varepsilon_0}
\left\|
\lambda^a(H-(\mu\pm i\varepsilon))^{-1}\lambda^a
\right\|_{L^2\to L^2}<\infty.
\end{Eq}
Indeed, by the spectral theorem,
\begin{Eq*}
\lambda^a(H-(\mu\pm i\varepsilon))^{-1}\lambda^a
=
\int_0^\infty(\nu-\mu\mp i\varepsilon)^{-1}
\lambda^a\,dE_H(\nu)\,\lambda^a.
\end{Eq*}
Near zero,
\begin{Eq*}
\int_0^{\nu_0}\frac{\nu}{|\nu-\mu\mp i\varepsilon|}\,d\nu
\leq
\int_0^{\nu_0}\frac{\nu}{\nu+|\mu|}\,d\nu
\lesssim1,
\end{Eq*}
while on $[\nu_0,\infty)$ the functional calculus gives a uniform bound because
$(\nu-\mu\mp i\varepsilon)^{-1}$ is bounded by $\nu_0^{-1}$ there.  This proves
\eqref{Eq:H-negative-boundary}.

\appendixproofpart{Part 4: Positive real part}
Let $z=\mu\pm i\varepsilon$ with $\mu>0$ and
$0<\varepsilon\leq\varepsilon_0$.  The algebraic identity
\begin{Eq*}
(H-z)^{-1}-(H+z)^{-1}=2z(L-z^2)^{-1}
\end{Eq*}
gives
\begin{Eq}\label{Eq:half-L-identity-boundary}
(H-(\mu\pm i\varepsilon))^{-1}
=
(H+\mu\pm i\varepsilon)^{-1}
+2(\mu\pm i\varepsilon)R(\mu\pm i\varepsilon).
\end{Eq}
Here, as in the preceding resolvent estimates, the notation
$R(\mu\pm i\varepsilon)$ means
$(L-(\mu\pm i\varepsilon)^2)^{-1}$.  After inserting $\lambda^a$ on both
sides, the first term is uniformly bounded by
\eqref{Eq:H-negative-boundary}, applied with exterior parameter $-\mu<0$ and
the opposite sign of $\varepsilon$.  We split the second term into bounded and
high frequencies.  If $0<\mu\leq M$, then
\eqref{Eq:appendix-compact-lap} gives
\begin{Eq*}
2|\mu\pm i\varepsilon|
\left\|
\lambda^aR(\mu\pm i\varepsilon)\lambda^a
\right\|_{L^2\to L^2}
\leq C_M.
\end{Eq*}
If $\mu\geq M$, then $\mu\geq R$, and hence \La{La:Wang-res} gives
\begin{Eq*}
2|\mu\pm i\varepsilon|
\left\|
\lambda^aR(\mu\pm i\varepsilon)\lambda^a
\right\|_{L^2\to L^2}
\lesssim
|\mu\pm i\varepsilon|\cdot|\mu\pm i\varepsilon|^{-1}
\lesssim1.
\end{Eq*}
Thus
\begin{Eq}\label{Eq:H-positive-boundary}
\sup_{\mu>0,\ 0<\varepsilon\leq\varepsilon_0}
\left\|
\lambda^a(H-(\mu\pm i\varepsilon))^{-1}\lambda^a
\right\|_{L^2\to L^2}<\infty.
\end{Eq}

\appendixproofpart{Part 5: Conclusion}
Combining Parts 2, 3, and 4, we obtain
\begin{Eq*}
\sup_{\mu\in\R,\ 0<\varepsilon\leq\varepsilon_0}
\left\|
\lambda^a(H-(\mu\pm i\varepsilon))^{-1}\lambda^a
\right\|_{L^2\to L^2}<\infty.
\end{Eq*}
Together with the large-imaginary-part estimate in Part 1, this proves
\eqref{Eq:appendix-half-wave-res}.
\end{proof}

\subsection*{Acknowledgment}
The authors would like to thank Chris Sogge and Yannick Sire for stimulating discussions regarding wave equations on negatively curved manifolds. They are also grateful to Dean Baskin, Xi Chen, Jared Wunsch, Andrew Hassell, and particularly to Fang Wang, for 
 their insights concerning resolvent estimates on conformally compact asymptotically hyperbolic manifolds.

The first author was supported in part by NSFC 12501292.
The second author was supported in part by NSFC 12141102.



\end{document}